\newtheorem{theorem}{Theorem}[section]
\newtheorem{remark}[theorem]{Remark}
\newtheorem{corollary}[theorem]{Corollary}
\newtheorem*{theorem*}{Theorem}
\newtheorem{lemma}[theorem]{Lemma}
\newtheorem{definition}[theorem]{Definition}
\newtheorem{proposition}[theorem]{Proposition}
\newtheorem{introtheorem}{Theorem}
\numberwithin{equation}{section}
\newcommand{\Img}[1]{\mathrm{Im}#1}
\DeclareMathOperator{\dist}{\mathrm{dist}}
\DeclareMathOperator{\id}{\mathrm{id}}
\DeclareMathOperator{\dbar}{\bar{\partial}}
\DeclareMathOperator{\ev}{\mathrm{ev}}
\DeclareMathOperator{\Coh}{\mathrm{Coh}}
\DeclareMathOperator{\Par}{\mathrm{Par}}
\DeclareMathOperator{\mwh}{\widetilde{\mathfrak{h}}}
\title{
Equivalence between Gromov-Witten invariants: Domain dependent perturbation and Kuranishi structure approaches
}
\author{Yuguo Qin\thanks{Email: qinyuguo@hu-berlin.de}}
\date{}
\begin{document}
\maketitle

\begin{abstract}
  We prove that the homology class induced by the rational pseudocycle constructed via domain-dependent perturbations by Cieliebak and Mohnke \cite{CM} coincides with the homology class induced by the virtual fundamental class defined through Kuranishi structures by Fukaya, Oh, Ohta, and Ono \cite{FK,FOOO}.
\end{abstract}


\section{Introduction}
Let $(M,\omega)$ be a closed symplectic manifold with a compatible almost complex structure $J$. The Gromov-Witten invariants of $(M,\omega)$, which encode counts of $J$-holomorphic curves, can be constructed through several different approaches. These include the Kuranishi structure framework established by Fukaya, Ono, Oh, and Ohta \cite{FK,FOOO}; domain-dependent perturbation techniques, applied to genus zero curves by Cieliebak and Mohnke \cite{CM} and extended to higher genus curves by Gerstenberger \cite{Gerstenberger} and Ionel, Parker \cite{IP}; the polyfold approach pioneered by Hofer, Wysocki, and Zehnder \cite{HWZ}; and the homotopy sheaf approach developed by Pardon \cite{Pardon}. This paper establishes the equivalence between the domain-dependent perturbation method and the Kuranishi structure approach.

For a homology class $A \in H_2(M;\mathbb{Z})$ and nonnegative integer $k$, the moduli space $\overline{\mathcal{M}}_{0,k}(J,A)$ of stable $J$-holomorphic genus zero curves with $k$ marked points consists of equivalence classes $[u,z_1,\ldots,z_k]$, where $u: \Sigma_T \to M$ is a $J$-holomorphic map from a genus zero nodal curve $\Sigma_T$ modeled on a tree $T$ to $M$ and represents the class $A$, $z_i\in \Sigma_T$ are marked points. The evaluation maps $\ev_i: \overline{\mathcal{M}}_{0,k}(J,A) \to M$ is defined by $\ev_i([u,z_1,\ldots,z_k]) = u(z_i)$.

In general, $\overline{\mathcal{M}}_{0,k}(J,A)$ may not be a smooth manifold, making it challenging to define a fundamental class. To address this issue, Cieliebak and Mohnke \cite{CM} use domain-dependent almost complex structures $\mathcal{J}_{\ell+1}$ that depend on the universal curve $\overline{\mathcal{M}}_{0,\ell+1}$ of genus zero curves with $\ell$ marked points, combined with a Donaldson hypersurface $Y \subset M$ of sufficiently large degree $D$. For a regular choice $K \in \mathcal{J}_{\ell+1}$ and $\ell = D\omega(A)$, they show that the evaluation map
\[\prod_{i=1}^k\ev_i: \mathcal{M}_{0,k+\ell}(K,A,Y) \to M^k\]
defines a pseudocycle, where $\mathcal{M}_{0,k+\ell}(K,A,Y)$ is the moduli space of stable $K$-holomorphic spheres with $k+\ell$ marked points and the last $\ell$ marked points are constrained to lie on the Donaldson hypersurface $Y$. Moreover, they prove that the rational pseudocycle $\frac{1}{\ell!}(\prod_{i=1}^k\ev_i)$ is independent of the auxiliary data $K$ and $Y$.

In contrast, Fukaya and Ono \cite{FK} construct a Kuranishi structure on $\overline{\mathcal{M}}_{0,k}(J,A)$ by providing local finite-dimensional smooth manifolds around the moduli space with obstruction bundles. Using multisections of these bundles, they define a virtual fundamental class $[s^{'-1}(0)]$ whose pushforward 
\[(\prod_{i=1}^k\ev_i)_{*}([s^{'-1}(0)])\]
gives a well-defined homology class in $M^k$.

Our main result establishes the equivalence between these two approaches:

\begin{introtheorem}[Theorem \ref{Thm:5.17}]\label{introcorollayA}
  For $K\in \mathcal{J}_{\ell+1}^{reg*}$ and $J_0\in B^{*}\cap \mathcal{J}_{\ell+1}^{*}(M,Y;J,\theta_1)$ connected by a path $\{K_t\}_{t\in[0,1]}\subset \mathcal{J}_{\ell+1}^{*}(M,Y;J,\theta_1)$ with $K_0=J_0$, $K_1=K$, take a generic multisection $s^{'}=\{s^{'}_w\}$ on the Kuranishi structure
  \[\{(V_w,E_{J_0,w},\Gamma_w,\psi_w,s_w)\}_{w\in \overline{\mathcal{M}}_{0,k}(J_0,A)}.\]
  Then the homology class given by the pseudocycle
  \[{\bf ev}_k\coloneqq \prod_{i=1}^k\ev_i:\mathcal{M}_{0,k+\ell}(K,A,Y)\rightarrow M^k\]
  is equal to $\ell !$ times the homology class ${\bf ev}_{k*}([s^{'-1}(0)])$.
\end{introtheorem}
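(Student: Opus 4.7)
The overall strategy is to construct a one-parameter family of Kuranishi structures on $\mathcal{M}_{0,k+\ell}(K_t,A,Y)$ interpolating between the Cieliebak-Mohnke setup at $t=1$ and the Fukaya-Ono setup at $t=0$, and then to deduce the equality of homology classes via cobordism invariance of the virtual fundamental class.

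First, for the endpoint $t=0$, I would produce a Kuranishi structure on $\mathcal{M}_{0,k+\ell}(J_0,A,Y)$ by enlarging the given charts $(V_w,E_{J_0,w},\Gamma_w,\psi_w,s_w)$ to include the datum of the last $\ell$ marked points constrained to lie on $Y$. Since the Donaldson degree is chosen so that $\ell=D\omega(A)$, a $J_0$-holomorphic representative meets $Y$ transversely in exactly $\ell$ points, and ordering these intersections exhibits the constrained moduli space as a degree-$\ell!$ branched cover of $\overline{\mathcal{M}}_{0,k}(J_0,A)$. A coherently induced multisection on the enlarged chart then shows that the ${\bf ev}_k$-pushforward of the virtual class of $\mathcal{M}_{0,k+\ell}(J_0,A,Y)$ equals $\ell!\cdot({\bf ev}_k)_*([s^{'-1}(0)])$. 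At the other endpoint $t=1$, the regularity $K\in \mathcal{J}_{\ell+1}^{reg*}$ guarantees that $\mathcal{M}_{0,k+\ell}(K,A,Y)$ is already a smooth manifold of expected dimension with trivial obstruction bundle, so its Kuranishi virtual class coincides with the usual fundamental class and its pushforward realizes the Cieliebak-Mohnke pseudocycle class.

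Next, I would build a parametric Kuranishi structure on
\[\mathcal{P}=\bigcup_{t\in[0,1]}\{t\}\times\mathcal{M}_{0,k+\ell}(K_t,A,Y),\]
compatible with the fiberwise structures at the endpoints. Compactness of $\mathcal{P}$ is ensured by the hypothesis $\{K_t\}\subset \mathcal{J}_{\ell+1}^*(M,Y;J,\theta_1)$, which forbids bubbling onto $Y$ along the path, and which guarantees that the Donaldson marked points remain well-defined throughout the family. Extending $s^{'}$ to a generic multisection on this parametric structure produces a pseudocycle cobordism in $M^k$ between the two endpoint classes, yielding the desired equality $({\bf ev}_k)_*[\mathcal{M}_{0,k+\ell}(K,A,Y)] = \ell!\cdot({\bf ev}_k)_*([s^{'-1}(0)])$.

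The main technical obstacle is the coherent enlargement of the Kuranishi charts to carry the Donaldson marked points across all strata, in a manner compatible with the parametric family $K_t$. Particular care is needed at configurations where a stable curve meets $Y$ non-transversely, for instance at a node or along a ghost component attached at a marked point on $Y$; there the branched-cover description may degenerate and the factor $\ell!$ is not immediate. The admissibility condition on $\{K_t\}$ is essential to exclude such pathologies throughout the interpolation. A further subtle point is the interaction between the automorphism groups $\Gamma_w$ of the original charts and the $S_\ell$-action permuting the Donaldson marked points; the $\ell!$ factor must be extracted from this interplay by showing that, at a generic point of $\overline{\mathcal{M}}_{0,k}(J_0,A)$, the $\Gamma_w$-action on the fiber of the branched cover is free, and by globalizing this comparison via the coherent multisection in a way that commutes with the cobordism of Step 3.
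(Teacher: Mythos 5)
Your proposal captures the correct high-level skeleton of the paper's proof: relate the pseudocycle to a Kuranishi virtual class at $K$, run a parametric Kuranishi cobordism from $K$ down to the domain-independent $J_0$, and extract the factor $\ell!$ from permuting the $\ell$ Donaldson marked points. However, the three places where you flag ``the main technical obstacle'' and ``particular care is needed'' are precisely where the real content lies, and your sketch does not resolve them.

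First, your claim that for $K\in\mathcal{J}_{\ell+1}^{reg*}$ the space ``is already a smooth manifold of expected dimension with trivial obstruction bundle, so its Kuranishi virtual class coincides with the usual fundamental class'' is not correct as stated. Only the top stratum $\mathcal{M}_{0,k+\ell}(K,A,Y)$ is cut out transversely; the compactification $\overline{\mathcal{M}}_{0,k+\ell}(K,A,Y)$ on which the Kuranishi virtual class lives is not a smooth closed manifold, so there is no ``usual fundamental class'' to appeal to. The paper instead builds a genuine Kuranishi structure on the compactification (Theorem \ref{Thm:5.12}, with obstruction data $E_{K,u}\times\mathbb{C}^\ell$ where the $\mathbb{C}^\ell$-factor records the $\ell$ Donaldson constraints) and then compares the pseudocycle class with the virtual class in \emph{relative} homology $H_*(M^k,U)\cong H_*(M^k)$, where $U$ is a Zinger-style small neighborhood of the image of the boundary strata (Proposition \ref{Prop:3.3.1}, Theorem \ref{Thm:5.14}). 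That step cannot be bypassed by appealing to smoothness.

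Second, and more seriously, your branched-cover argument for the $\ell!$ factor glosses over exactly the degeneration you acknowledge. The generic multisection $s'$ on $\overline{\mathcal{M}}_{0,k}(J_0,A)$ produces zeroes that need not be $J_0$-holomorphic, so there is no a priori reason they intersect $Y$ in $\ell$ transverse points, and the fiber of the forgetful map over such a zero need not have $\ell!$ elements. The paper's key idea---which your proposal is missing---is to first \emph{modify the obstruction data so that it vanishes near $u^{-1}(N_{Y,\epsilon})$} (Section \ref{section:5.1}, Lemma \ref{lemma3.7}, Proposition \ref{proposition3.28}). This forces perturbed solutions to remain genuinely $J_0$-holomorphic near $Y$, so that transversality of intersection with $Y$ persists under perturbation. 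This modified structure is then pulled back via the forgetful map $\varphi_\ell$ to a second Kuranishi structure on $\overline{\mathcal{M}}_{0,k+\ell}(J_0,A,Y)$ (Section \ref{Sec:PullBack}), and only after excising the tangency locus, the nodal strata, and the locus where some of the first $k$ marked points hit $Y$---again by the Zinger relative-homology device---does $\varphi_\ell$ become an honest orientation-preserving $\ell!$-to-1 covering on a compact subset of the zero locus (Theorem \ref{Thm:5.22}). Without the vanishing-near-$Y$ modification, the $\ell!$ covering claim simply fails on the perturbed zero set, and without the relative-homology excision it fails on the bad loci you identified. Your remark about the interaction with $\Gamma_w$ is less of an issue in genus zero, since the underlying curves in $\overline{\mathcal{M}}_{0,k+\ell}(J_0,A,Y)$ are already stable, so $\Gamma_u$ is trivial (Remark \ref{RMK:5.2}).
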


Our proof strategy proceeds through the following key steps:

\begin{enumerate}
  \item We construct a Kuranishi structure on $\overline{\mathcal{M}}_{0,k+\ell}(K,A,Y)$ with additional obstruction data restricting the last $\ell$ marked points (Theorem \ref{Thm:5.12}).
  
  \item For regular $K \in \mathcal{J}_{\ell+1}^{reg*}$, we demonstrate that the homology class given by the pseudocycle defined on $\mathcal{M}_{0,k+\ell}(K,A,Y)$ equals the homology class given by the virtual fundamental class provided by the Kuranishi structure on the compactified moduli space $\overline{\mathcal{M}}_{0,k+\ell}(K,A,Y)$ (Theorem \ref{Thm:5.14}).
  
  \item For the $K$ above and domain independent $J_0$ (which can be regarded as a constant domain dependent almost complex structure) in the same connected component, we establish equivalence between virtual fundamental classes on $\overline{\mathcal{M}}_{0,k+\ell}(K,A,Y)$ and $\overline{\mathcal{M}}_{0,k+\ell}(J_0,A,Y)$ by constructing a cobordism between their respective Kuranishi structures (Theorem \ref{Thm:5.15}).
  
  \item We modify the obstruction data for maps in $\overline{\mathcal{M}}_{0,k}(J_0,A)$ to obtain obstruction data that vanishes near the Donaldson hypersurface, and pull it back to $\overline{\mathcal{M}}_{0,k+\ell}(J_0,A,Y)$ via forgetful maps (Section \ref{Sec:PullBack}).
  
  \item Finally, we prove that the virtual fundamental class provided by the Kuranishi structure on $\overline{\mathcal{M}}_{0,k+\ell}(J_0,A,Y)$ equals $\ell!$ times the virtual fundamental class on $\overline{\mathcal{M}}_{0,k}(J_0,A)$ (Theorem \ref{Thm:5.22}).
\end{enumerate}

The following diagram summarizes our approach:

\begin{center}
  \begin{scalebox}{0.8}{
    \begin{tikzpicture}[
      node distance = 4.5cm,
      every edge quotes/.style = {font=\scriptsize, auto, align=center}
      ]
      \node (0) [rectangle,draw,align=center, label={[font=\scriptsize]}] {Pseudocycle $\prod_{i=1}^k\ev_i$: \\$\mathcal{M}_{0,k+\ell}(K,A,Y)\to M^k$};
      \node (1) [rectangle,draw,right = of 0, align=center, label={[font=\scriptsize, align=center]above: Definition see \\{\bf Theorem \ref{Thm:5.12}}}] {Virtual fundamental class \\by {\bf 1st} Kuranishi structure \\on $\overline{\mathcal{M}}_{0,k+\ell}(K,A,Y)$};
      \node (2) [rectangle,draw,below =2cm of 1, align=center] {Virtual fundamental class \\by {\bf 1st} Kuranishi structure \\on $\overline{\mathcal{M}}_{0,k+\ell}(J_0,A,Y)$};
      \node (3) [rectangle,draw,left =4.5cm of 2, align=center, label={[font=\scriptsize, align=center]}] {Virtual fundamental class \\by {\bf 2nd} Kuranishi structure \\on $\overline{\mathcal{M}}_{0,k+\ell}(J_0,A,Y)$};
      \node (4) [rectangle,draw,below =2cm of 3, align=center, label={[font=\scriptsize, align=center]below: {\bf Theorem \ref{introcorollayA}}}] { $\ell!$ Virtual fundamental class \\by {\bf 2nd} Kuranishi structure \\on $\overline{\mathcal{M}}_{0,k}(J_0,A)$};
      \node (5) [rectangle,draw,below right =2cm and 4.5 of 3, align=center, label={[font=\scriptsize, align=center]below: {Section \ref{section:5.1}}}] {{\bf 2nd} Kuranishi structure \\on $\overline{\mathcal{M}}_{0,k}(J_0,A)$ vanishes \\near preimage of $Y$};

      \path[-Stealth]
      (0) edge["Compare via relative homology\\given by a compact subset","{\bf Theorem \ref{Thm:5.14}}" swap] (1)
      (1) edge["Cobordism between \\Kuranishi structures \\from $K$ to $J_0$", "{\bf Theorem \ref{Thm:5.15}}" swap] (2)
      (2) edge["On the same base space \\different Kuranishi structures \\induce the same virtual \\ fundamental class" swap] (3)
      (3) edge["Compare via relative homology\\given by a compact subset.\\ See {\bf Theorem \ref{Thm:5.22}}" swap] (4);
      \path[dashed,-Stealth] (5) edge["Pull back obstruction data\\and multisection via $\varphi_\ell$ \\defined in Section \ref{Sec:PullBack}" swap, bend left=20] (3);
    \end{tikzpicture}}
  \end{scalebox}
\end{center}

The paper is organized as follows: In Section \ref{Sec:3}, we review the pseudocycle construction given by Cieliebak and Mohnke in \cite{CM}. Section \ref{Sec:3} outlines the Kuranishi structure approach by Fukaya, Oh, Ohta, and Ono \cite{FK,FOOO}. Section \ref{Sec:4} contains the proofs of our main results.

{\bf Acknowledgements.}
I thank my supervisor Prof.Klaus Mohnke for his guidance and particularly for suggesting the modification of obstruction data near the Donaldson hypersurface. I am grateful to Prof.Chris Wendl for helpful comments, especially his question about torsion in homology groups that led me to discover Zinger's results in \cite{Zinger}. I also thank Prof.Wilhelm Klingenberg for his support during this research, the Elsa-Neumann-scholarship from the State of Berlin for their generous sponsorship.

\section{Pseudocycle by domain-dependent perturbation via Donaldson hypersurface}\label{Sec:3}
In this section, we introduce the pseudocycle construction given by Cieliebak and Mohnke in \cite{CM}. Denote by $\overline{\mathcal{M}}_{0,k+1}$ the Deligne-Mumford space of stable curves of genus $0$ with $k+1$ marked points. For a symplectic manifold $(M,\omega)$, the space of domain-dependent almost complex structures $\mathcal{J}_{\overline{\mathcal{M}}_{0,k+1}}$ consists of smooth sections of the pullback bundle $\mathbf{J}_{\overline{\mathcal{M}}_{0,k+1}}$:
\begin{equation*}
  \begin{tikzcd}
    \mathbf{J}_{\overline{\mathcal{M}}_{0,k+1}}\arrow[d] & \mathbf{J}\arrow[d]\\
    \overline{\mathcal{M}}_{0,k+1}\times M\arrow[r]&M
  \end{tikzcd}
\end{equation*}
, where $\mathbf{J}\to M$ is the bundle whose fiber at $x\in M$ is the space of $\omega$-compatible complex structures on the tangent space $T_xM$. 

The space $\mathcal{J}_{\overline{\mathcal{M}}_{0,k+1}}$ lacks the Banach manifold structure needed for transversality arguments. Following \cite[Sec.~3]{CM}, we define $\mathcal{J}_{k+1}$ as
\[\mathcal{J}_{k+1}\coloneqq \exp_{J}(\Coh(\overline{\mathcal{M}}_{0,k+1},T_{J}\mathcal{J}))\subset \mathcal{J}_{\overline{\mathcal{M}}_{0,k+1}},\]
where $T_{J}\mathcal{J}$ is the tangent space of $\mathcal{J}$ at $J$, $\Coh(\overline{\mathcal{M}}_{0,k+1},T_{J}\mathcal{J})$ consists of coherent maps from $\overline{\mathcal{M}}_{0,k+1}$ to $T_{J}\mathcal{J}$ with finite $\epsilon$-Floer norm and $\exp_{J}$ is the exponential map. The space $\mathcal{J}_{k+1}$ forms a Banach manifold.

The singular curves arising from the Gromov compactification of domain stable spheres may not remain domain stable, and thus may no longer be elements of $\overline{\mathcal{M}}_{0,k+1}$. To address this issue, the authors in \cite{CM} use the Donaldson hypersurface.

Assume that $(M, \omega)$ is of dimension $2n$ with $[\omega] \in H^2(M; \mathbb{Z})$. For sufficiently large integers $D$, there exist symplectic hypersurfaces $Y \subset M$ Poincaré dual to $D[\omega]$ that are $\bar{J}$-holomorphic for some $\omega$-compatible almost complex structure $\bar{J}$ arbitrarily $C^{0}$-close to $J$ \cite{SKD,CM}.

\begin{definition}[{\cite[Rmk.~8.10, Def.~9.2]{CM}}]\label{sym:DonaldsonPair}\label{Def:3.14}
  A {\bf Donaldson pair} of degree $D$ is a pair $(J,Y)$ consisting of an $\omega$-compatible almost complex structure $J$ and a hypersurface $Y\subset M$ that is Poincaré dual to $D[\omega]$ and satisfies
  \[\Theta(Y;\omega,J)<\theta_2,\quad D\geq D^{*}(M,\omega,J),\]
  where $\Theta(Y;\omega,J)$ denotes the supremum of the Kähler angles of the tangent spaces of $Y$, the constant $D^{*}(M,\omega,J)$ depends on $M$, $\omega$, $J$, and $\theta_0$, and $0<\theta_2<\theta_0<1$ are fixed constants.
\end{definition}

For a Donaldson pair $(J,Y)$ and a constant $\theta>0$, there exists an $\omega$-compatible almost complex structure $K$ such that $Y$ is $K$-holomorphic and $\|K(y)-J(y)\|<\theta$ for all $y\in Y$ (See \cite[Lem.~8.5,8.9]{CM}). Define 
\[\mathcal{J}(M,Y;J,\theta) \coloneqq \{K\in\mathcal{J} \mid K(TY)=TY, \|K(m)-J(m)\|<\theta, m\in M\} \]
where $\mathcal{J}$ is the space of $\omega$-compatible almost complex structures on $M$. For $K$-holomorphic maps $f:S^2\rightarrow M$ and $K$-complex hypersurface $Y$, the intersection $f^{-1}(Y)$ is finite with positive local intersection numbers $\iota(f,Y,z)>0$ at $z\in f^{-1}(Y)$.

Authors in \cite{CM} stabilize curves by constraining additional marked points to lie on $Y$, the following proposition shows that the intersecions are enough to stabilize the curves for a generic choice of the almost complex structure.

\begin{proposition}[{\cite[Prop.~8.13, Cor.~8.16]{CM}}]\label{Prop:3.16}
  For a Donaldson pair $(J,Y)$ of sufficiently large degree $D$ and appropriate energy bound $E$, there exists an open dense subset $\mathcal{J}^{*}(M,Y;J,\theta_1,E)\subset \mathcal{J}(M,Y;J,\theta_1)$ such that for $K\in\mathcal{J}^{*}(M,Y;J,\theta_1,E)$:
  \begin{enumerate}
    \item All $K$-holomorphic spheres of energy less than $E$ in $Y$ are constant.
    \item Every nonconstant $K$-holomorphic sphere of energy less than $E$ intersects $Y$ in at least 3 distinct points.
  \end{enumerate}
  $\theta_1$ is a constant satisfying $\theta_2<\theta_1<\theta_0$.
\end{proposition}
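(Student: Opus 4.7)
I would establish both parts by combining a Sard--Smale transversality argument on a universal moduli space of $K$-holomorphic spheres with a careful virtual-dimension/energy bound using the defining properties of the Donaldson pair. The space $\mathcal{J}(M,Y;J,\theta_1)$ sits inside the Banach manifold of $\omega$-compatible almost complex structures with $K(TY)=TY$ and $\|K-J\|<\theta_1$, and all transversality will be arranged inside this constrained space.

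For (1), the key input is the adjunction identity $c_1(TY)=(c_1(TM)-D[\omega])|_Y$. The virtual dimension of the moduli space of simple $K$-holomorphic spheres contained in $Y$ and representing a class $B$ with $i_{*}B=A$ is
\[
2\langle c_1(TM),A\rangle-2D\omega(A)+2(n-1)-6.
\]
Gromov compactness gives a uniform bound $|\langle c_1(TM),A\rangle|\leq C(E)$ for homology classes with $\omega(A)<E$, while integrality of $[\omega]$ forces $\omega(A)\geq 1$ for any non-constant class. Hence for $D$ sufficiently large the virtual dimension is negative for every admissible class, so the moduli space of simple non-constant spheres in $Y$ is empty for generic $K\in\mathcal{J}(M,Y;J,\theta_1)$; non-simple spheres factor through a simple one with image also in $Y$, yielding (1).

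For (2), I would introduce the universal moduli space $\widetilde{\mathcal{M}}^{*}$ of pairs $(K,f)$ where $K\in\mathcal{J}(M,Y;J,\theta_1)$ and $f$ is a simple non-constant $K$-holomorphic sphere of energy less than $E$ whose image is not contained in $Y$. This is a Banach manifold, the projection $\widetilde{\mathcal{M}}^{*}\to\mathcal{J}(M,Y;J,\theta_1)$ is Fredholm, and a secondary transversality argument arranges that $f$ meets $Y$ transversally in $M$. Positivity of intersection then forces $|f^{-1}(Y)|=D\omega(f_{*}[S^2])\geq D\geq 3$ once $D$ is chosen large enough; multiply covered spheres reduce to the simple case by composition with the cover. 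Openness of $\mathcal{J}^{*}(M,Y;J,\theta_1,E)$ follows from Gromov compactness: a violating sequence $K_n\to K$ would produce a limit configuration with either a component contained in $Y$ or a non-constant component with fewer than three distinct intersection points with $Y$, both ruled out by (1) and the transversality already established at the limit $K$.

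The main obstacle I expect is running the Sard--Smale argument inside the constrained space $\mathcal{J}(M,Y;J,\theta_1)$, whose tangent vectors must preserve $TY$. The standard resolution exploits that a simple sphere $f$ not contained in $Y$ meets $M\setminus Y$ on an open dense subset of its domain, and perturbations of $K$ supported there automatically respect the tangency condition $K(TY)=TY$ and already suffice for surjectivity of the universal linearisation. Once (1) eliminates the remaining case of spheres entirely inside $Y$, this argument is unobstructed and produces the required open dense subset $\mathcal{J}^{*}(M,Y;J,\theta_1,E)$.
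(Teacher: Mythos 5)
This proposition is cited from \cite[Prop.~8.13, Cor.~8.16]{CM} and is not proved in the present paper, so the comparison has to be against Cieliebak--Mohnke's original argument. Your treatment of part (1) is essentially theirs: a Sard--Smale transversality argument combined with the adjunction formula $c_1(TY)=(c_1(TM)-D[\omega])|_Y$ makes the expected dimension of simple $K$-spheres inside $Y$ negative once $D$ exceeds a bound determined by the energy cutoff $E$, and one checks that perturbations of $K$ near $Y$ restrict surjectively onto perturbations of $K|_Y$ inside the constrained space $\mathcal{J}(M,Y;J,\theta_1)$, so the universal moduli space is cut out transversally and the Fredholm projection has negative index.

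For part (2), however, there is a genuine gap. You assert that a secondary transversality argument makes every simple $K$-sphere of bounded energy meet $Y$ transversally, and then deduce $|f^{-1}(Y)|=D\omega(A)\geq D\geq 3$. This cannot be arranged for a single generic $K$: tangency of $f$ to $Y$ at a point $z$ is a real codimension-two condition on the pair $(f,z)$, so for generic $K$ the tangent simple spheres form a codimension-two subset of $\mathcal{M}^{*}(A;K)$, which need not be empty once that moduli space has positive dimension. The intersections therefore need not all be transverse, and $D\omega(A)$ only counts with multiplicity. What Cieliebak and Mohnke actually do is stratify by tangency order and show, again by Sard--Smale inside $\mathcal{J}(M,Y;J,\theta_1)$, that the locus of simple spheres whose total order of tangency to $Y$ is at least $m$ has codimension $2m$; hence for generic $K$ the total tangency order of any simple sphere of class $A$ is bounded in terms of $\dim\mathcal{M}^{*}(A;K)$, uniformly over the finitely many $A$ with $\omega(A)<E$. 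The number of \emph{distinct} intersection points is then at least $D\omega(A)$ minus this tangency bound, which is at least $3$ once $D$ is large in terms of $E$. Your reduction of multiple covers to the simple underlying sphere and your openness argument via Gromov compactness are fine; it is precisely this quantitative control on tangencies that your proposal omits and that the argument cannot do without.
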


In the case of domain-dependent almost complex structures, the authors in \cite{CM} define the following space:

\begin{definition}[{\cite[Def.~9.4]{CM}}]\label{Def:3.20}
  For $\ell\geq 3$ and $E_\ell=\ell/D$, define the space of domain-dependent almost complex structures
  \[\mathcal{J}_{\ell+1}^{*}(M,Y;J,\theta_1)\coloneqq\{K\in\mathcal{J}_{\ell+1}\mid K(\zeta)\in B^{*},\forall \zeta\in\overline{\mathcal{M}}_{0,\ell+1}\},\]
  where $B^{*}\subset \mathcal{J}^{*}(M,Y;J,\theta_1,E_\ell)$ is a $\theta_2$-contractible open subset, that is, $B^{*}$ is contractible in $\mathcal{J}^{*}(M,Y;J,\theta_1,E_\ell)$ to a point of $\mathcal{J}^{*}(M,Y;J,\theta_2,E_\ell)$. $\mathcal{J}_{\ell+1}^{*}(M,Y;J,\theta_1)$ is a Banach manifold.
\end{definition}

Now, fix a Donaldson pair $(J,Y)$ of sufficiently large degree $D$ and a domain-dependent almost complex structure $K\in\mathcal{J}_{\ell+1}^{*}(M,Y;J,\theta_1)$. For an $\ell$-stable $(k+\ell)$-labeled tree $T$ and homology decomposition $A=\sum_{\alpha\in T} A_\alpha$, define
\[\widetilde{\mathcal{M}}_{0,T}(K,A,Y) = \{(u,{\bf z}) \mid \dbar_Ku=0, [u_\alpha]=A_\alpha, u(z_i)\in Y \text{ for }k+1\leq i\leq k+\ell\}.\]
Denote by $\mathcal{M}_{0,T}(K,A,Y)\coloneqq \widetilde{\mathcal{M}}_{0,T}(K,A,Y)/\Gamma_T$, where $\Gamma_T$ is the automorphism group of the underlying curve modeled on the tree $T$. Let $T_0$ be the $(k+\ell)$-labeled tree with only one node, we call $\mathcal{M}_{0,k+\ell}(K,A,Y)\coloneqq \mathcal{M}_{0,T_0}(K,A,Y)$ the top stratum of the moduli space. We call the following space
\[\overline{\mathcal{M}}_{0,k+\ell}(K,A,Y)=\coprod_{T}\mathcal{M}_{0,T}(K,A,Y)\]
the compactified moduli space. The main result of \cite{CM} is the following theorem:

\begin{theorem}[{\cite[Thm.~1.2,1.3]{CM}}]\label{Thm:3.25}
  For a Donaldson pair $(J,Y)$ of sufficiently large degree $D$ and $\ell=D\omega(A)$, there exists a Baire set $\mathcal{J}_{\ell+1}^{reg*}\subset \mathcal{J}_{\ell+1}^{*}(M,Y;J,\theta_1)$ such that for $K\in \mathcal{J}_{\ell+1}^{reg*}$, the evaluation map ${\bf ev}_k:\mathcal{M}_{0,k+\ell}(K,A,Y)\rightarrow M^k$ represents a pseudocycle. Moreover, the rational pseudocycle $\frac{1}{\ell!}{\bf ev}_k$ is independent of auxiliary data.
\end{theorem}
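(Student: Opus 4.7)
The plan is to establish the three claims sequentially: (i) for generic $K$ the top stratum $\mathcal{M}_{0,k+\ell}(K,A,Y)$ is a smooth manifold of the expected real dimension $2n+2c_1(A)+2k-6$; (ii) the images of all lower strata $\mathcal{M}_{0,T}(K,A,Y)$ with $T\neq T_0$ under ${\bf ev}_k$ are contained in smooth maps from manifolds of dimension at least $2$ smaller, so that ${\bf ev}_k$ represents a pseudocycle; (iii) the rational pseudocycle $\frac{1}{\ell!}{\bf ev}_k$ is independent of the Donaldson pair and of $K$, by a parametrised cobordism argument. The combinatorial backbone of everything is Proposition \ref{Prop:3.16}, which forces every nonconstant component to meet $Y$ in at least three points.

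For step (i), I would run the standard universal moduli space construction: form the Banach bundle over a suitable Banach manifold of pairs $(u,K)$ with $u\in W^{1,p}(S^2,M)$ of class $A$ and $K\in\mathcal{J}_{\ell+1}^{*}(M,Y;J,\theta_1)$, whose fiber is $L^p(\Lambda^{0,1}\otimes u^*TM)$, together with $\ell$ transversality conditions $u(z_i)\in Y$. The coherent Floer-norm completion in Definition \ref{Def:3.20} is precisely what makes the linearisation in the $K$-direction surjective at every solution, so the universal zero set is a smooth Banach manifold; Sard--Smale yields a Baire set $\mathcal{J}_{\ell+1}^{reg*}$ of regular values, and for such $K$ the fibre $\mathcal{M}_{0,k+\ell}(K,A,Y)$ is a smooth manifold of the expected dimension. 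Transversality of the evaluation at the extra marked points to $Y$ is part of the same universal argument, since the $Y$-condition is a closed submanifold condition on $M^\ell$.

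The core of the argument is step (ii). For a $(k+\ell)$-labeled tree $T$ with vertices $\{v\}$ carrying classes $A_v$ and $k_v$ original marked points, a naive dimension count on each vertex gives $2n+2c_1(A_v)+2(k_v+\ell_v)-6$ after imposing the $Y$-conditions on $\ell_v$ marked points. Summed over $T$ with the $-2$ correction at each node, the top stratum has the claimed dimension. The crucial improvement comes from forcing each \emph{nonconstant} vertex to absorb \emph{at least three} of the $\ell$ auxiliary marked points (Proposition \ref{Prop:3.16}(2)), while each constant vertex carries no $A_v$ but is stabilised by at least three special points, and no auxiliary point can sit on a ghost bubble inside $Y$ (Proposition \ref{Prop:3.16}(1)). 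A case analysis on the number of nodes and ghost components, identical in spirit to \cite[Sec.~9--11]{CM}, then shows that every $T\neq T_0$ sheds real dimension at least $2$. Since the lower strata are themselves smooth manifolds of the right dimension for generic $K$ (by applying the same universal argument tree-by-tree, using that $B^{*}$ is open), their images under ${\bf ev}_k$ live in the correct codimension, and Gromov compactness provides the required compact closure. The delicate point here is controlling ghost bubbles and the appearance of auxiliary marked points on them; this is exactly the purpose of the $\ell$-stability hypothesis on $T$ and the openness of $B^{*}$, so I would treat it carefully rather than by generic dimension counts alone.

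For step (iii), given two regular choices $(J^0,Y^0,K^0)$ and $(J^1,Y^1,K^1)$ of auxiliary data, I would first reduce to equal degree $D$ by doubling up Donaldson hypersurfaces, then connect $Y^0$ and $Y^1$ by a path of Donaldson hypersurfaces (available in the relevant Baire sense) and $K^0,K^1$ by a path in $\mathcal{J}_{\ell+1}^{*}(M,Y_t;J_t,\theta_1)$. The parametrised universal moduli space over $[0,1]$ produces a smooth cobordism of one dimension higher for a generic path, whose boundary is the disjoint union of the two top strata; a further dimension count on the multi-node strata over the interval shows this cobordism is itself a pseudocycle cobordism. The factor $\frac{1}{\ell!}$ appears because the $\ell$ auxiliary marked points are ordered in $\mathcal{M}_{0,k+\ell}(K,A,Y)$ but the geometric intersection $u^{-1}(Y)$ is an unordered set, so averaging over the $S_\ell$-action on the last $\ell$ labels kills the ordering ambiguity and makes the construction canonical; independence of $Y$ after this averaging is the assertion. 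The main obstacle I anticipate is step (ii): the Gromov-type compactness and the stratawise transversality must cooperate to rule out the codimension-$1$ degenerations that would otherwise obstruct the pseudocycle property, and this is the step that truly uses Proposition \ref{Prop:3.16} in its strongest form.
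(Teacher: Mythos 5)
This theorem appears in the paper with the header citation \cite[Thm.~1.2,1.3]{CM} and is not proved in the paper at all: it is a quoted input from Cieliebak--Mohnke, so there is no in-paper argument to compare your proposal against. What you wrote is a reasonable high-level reconstruction of the strategy of \cite{CM}: a universal moduli space with a Sard--Smale step for the top stratum, a dimension count over strata driven by Proposition~\ref{Prop:3.16} to get codimension at least~$2$ on the boundary, and a parametrised cobordism to establish independence of the auxiliary data, with the $\frac{1}{\ell!}$ factor accounting for the unordered nature of the intersection $u^{-1}(Y)$. Two places where the sketch is loose. First, the per-vertex dimension you write, $2n+2c_1(A_v)+2(k_v+\ell_v)-6$ ``after imposing the $Y$-conditions on $\ell_v$ marked points,'' is off by $2\ell_v$: each $Y$-condition cuts real codimension~$2$, so the constrained vertex dimension (before node matching) is $2n+2c_1(A_v)+2k_v-6$; that the $\ell_v$ extra points are dimension-neutral is precisely why the construction works, and getting this arithmetic right is essential to the codimension-$2$ claim for nontrivial trees. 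Second, in step (iii) ``connect $Y^0$ and $Y^1$ by a path of Donaldson hypersurfaces, available in the relevant Baire sense'' substantially understates the difficulty: Donaldson hypersurfaces at fixed degree are not known to form a connected family, and the actual mechanism in \cite{CM} is an asymmetric degree-$D_0D_1$ comparison through an intermediate moduli problem, not a generic path of hypersurfaces; if you are rebuilding the argument you should not treat this step as routine Sard--Smale. Neither issue affects the top-line strategy, which is faithful to \cite{CM}, but both would need to be repaired to yield a proof rather than an outline.
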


In this paper, we will use the homology class induced by the pseudocyle. In \cite{Zinger}, A.Zinger establishes a natural isomorphism between equivalence classes of pseudocycles and integral homology classes of a smooth manifold $X$, eliminating the need to divide by the torsion part. A key result from \cite{Zinger} is:

\begin{proposition}[{\cite[Prop.~2.2]{Zinger}}]\label{Prop:3.3.1}
  Let $W^{'}$ be an oriented smooth manifold, if $g:W^{'}\to X$ is a smooth map and $V$ is an open neighborhood of a subset $B$ of $\Img{g}$ in $X$, there exists a neighborhood $U$ of $B$ in $V$ such that
  \[H_l(U)=0,\quad\text{if }l>\dim W^{'}.\]
\end{proposition}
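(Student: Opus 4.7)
The plan is to construct $U$ as a small tubular-style neighborhood of a compact subset of $V$ that is homotopy equivalent to a CW complex of dimension at most $m := \dim W'$; once this is achieved, the vanishing $H_l(U) = 0$ for $l > m$ follows from cellular homology.

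First I would reduce the problem to a local one using paracompactness of $X$. Cover $B$ by countably many relatively compact open sets $V_i \subset V$ with $\overline{V_i} \subset V$, build $U_i \subset V_i$ with $H_l(U_i) = 0$ for $l > m$, and define $U = \bigcup_i U_i$ from a locally finite refinement. The global vanishing then follows by induction on the cover via Mayer--Vietoris, provided pairwise overlaps also retract onto complexes of dimension $\leq m$.

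For each local piece I would fix a smooth triangulation of $W'$, take a compact subpolyhedron $P \subset W'$ such that $g(P) \supset B \cap V_i$, and set $C_i := g(P) \subset V_i$. By Whitney approximation I can replace $g|_P$ by a real-analytic perturbation (keeping $C_i \subset V_i$), making $C_i$ into a compact semi-analytic subset of $V_i$ of dimension $\leq m$, hence triangulable by the Lojasiewicz theorem. Choosing a Riemannian metric on $X$ and taking $U_i$ to be a sufficiently small open $\epsilon$-neighborhood of $C_i$, regular-neighborhood theory gives a deformation retraction of $U_i$ onto a polyhedral neighborhood of $C_i$ of dimension $\leq m$, and hence $H_l(U_i) = 0$ for $l > m$.

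The main obstacle is the possibly singular structure of $g(W')$: a smooth map from an $m$-manifold can exhibit self-intersections, cusps, or accumulation phenomena that prevent a direct triangulation of the image. This forces the detour through semi-analytic approximation, after which the triangulation and regular-neighborhood machinery are essentially automatic. Bookkeeping the overlaps in the global gluing step is the second delicate point, but it can be handled by iterating the local construction on successive compact exhaustions of $B$, controlling the $\epsilon$-thickness of each $U_i$ from one step to the next so that the pairwise intersections retain a CW model of dimension $\leq m$.
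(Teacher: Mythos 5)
The paper itself offers no proof here; it simply quotes Zinger's Proposition 2.2, so there is no internal argument to compare with. Judged on its own, your proposal has a genuine gap at the very first local step: you want a compact subpolyhedron $P \subset W'$ with $g(P) \supset B \cap V_i$, but no such $P$ need exist, because $g$ is not assumed proper. For instance, let $W' = (0,1)$, $g : (0,1) \to \mathbb{R}$ the inclusion, $B = (0,1/2)$: every compact $P \subset (0,1)$ has $g(P)$ bounded away from $0$, so $g(P) \not\supset B \cap V_i$ for any relatively compact $V_i$ containing $(0,1/2)$. A second gap is that the real-analytic perturbation of $g|_P$ replaces $C_i = g(P)$ by a nearby but different set; nothing ensures a small tubular neighborhood of the perturbed image still contains $B \cap V_i$, so the $U_i$ you build need not even cover $B$. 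Finally, the Mayer--Vietoris gluing is not free: even for two pieces with $H_l(U_1) = H_l(U_2) = 0$ for $l > m$, the sequence only yields an injection $H_{m+1}(U_1 \cup U_2) \hookrightarrow H_m(U_1 \cap U_2)$, which does not vanish without a separate argument on the overlap.

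The standard way around all three problems, which is essentially the route in \cite{Zinger}, is to exhaust the \emph{source} by compacta $W' = \bigcup_i K_i$ and to weaken the local statement: one shows that the inclusion of a suitable small neighborhood of $g(K_i)$ into a slightly larger one factors, up to homotopy, through a finite CW-complex of dimension at most $m$. Building a nested increasing chain of open sets $U_1 \subset U_2 \subset \cdots$ in $V$ with $g(K_i) \subset U_i$ and this factoring property, one gets that $H_l(U_i) \to H_l(U_{i+1})$ is the zero map for $l > m$. Since singular homology commutes with directed colimits of open subsets, $H_l(\bigcup_i U_i) = \varinjlim H_l(U_i) = 0$ for $l > m$, and $\bigcup_i U_i$ is an open neighborhood of $\Img{g}$, hence of $B$. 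This direct-limit argument replaces the Mayer--Vietoris bookkeeping entirely, and it never requires $B$ to lie in the image of a compact set, nor does it require the image itself to be triangulable.
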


Consider the long exact sequence of homology groups associated to the pair $(X,U)$, if $\dim W^{'}\leq\dim W-2$, we have
\begin{equation}\label{eq:3.3.2}
  0\to H_{\dim W}(X;\mathbb{Z})\to H_{\dim W}(X,U;\mathbb{Z})\to 0.
\end{equation}

Denote by
\[\mathrm{BD}\overline{\mathcal{M}}_{0,k+\ell}(J,A,Y)\coloneqq \bigcup_T\mathcal{M}_{0,T}(J,A,Y)\]
the boundary part of $\overline{\mathcal{M}}_{0,k+\ell}(J,A,Y)$, where $T$ ranges over all $\ell$-stable $(k+\ell)$-labeled trees with more than one node. According to Proposition \ref{Prop:3.3.1}, we can take an open neighborhood $U$ of ${\bf ev}_k(\mathrm{BD}\overline{\mathcal{M}}_{0,k+\ell}(J,A,Y))$ in $M^k$ such that
\[H_l(U)=0,\quad\text{if }l>2(n-3+k+c_1(A))-2\]
and therefore, by (\ref{eq:3.3.2}),
\begin{equation}\label{eq:3.3.3}
  H_{2(n-3+k+c_1(A))}(M^k,U;\mathbb{Z})\cong H_{2(n-3+k+c_1(A))}(M^k;\mathbb{Z}).
\end{equation}

Now we can take a sufficiently large compact subset $\mathfrak{C}\subset \mathcal{M}_{0,k+\ell}(J,A,Y)$ such that the evaluation map
\[{\bf ev}_k:\mathfrak{C}\rightarrow M^k\]
represents a cycle in $H_{2(n-3+k+c_1(A))}(M^k,U;\mathbb{Z})$. By (\ref{eq:3.3.3}), this evaluation map represents a cycle in $H_{2(n-3+k+c_1(A))}(M^k;\mathbb{Z})$. The homology class $[{\bf ev}_k]$ defined in this manner is independent of the choice of compact subset $\mathfrak{C}$ and neighborhood $U$ (see the proof of \cite[Lem.~3.5]{Zinger}). The main result of \cite{Zinger} proves that 

\begin{theorem}[{\cite[Thm.~1.1]{Zinger}}]\label{Thm:3.3.1}
  If $X$ is a smooth manifold, and $\mathcal{H}_{*}(X)$ denotes the set of equivalence classes of pseudocycles into $X$, there exist natural homomorphisms of graded $\mathbb{Z}$-modules
  \begin{equation}\label{eq:3.3.4}
    \Psi_{*}:H_{*}(X;\mathbb{Z})\rightarrow \mathcal{H}_{*}(X)\text{ and }\Phi_{*}:\mathcal{H}_{*}(X)\rightarrow H_{*}(X;\mathbb{Z}),
  \end{equation}
  such that $\Psi_{*}\circ\Phi_{*}=\mathrm{Id}$ and $\Phi_{*}\circ\Psi_{*}=\mathrm{Id}$.
\end{theorem}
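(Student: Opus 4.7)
The plan is to construct the two homomorphisms $\Phi_*$ and $\Psi_*$ separately and then verify that they are mutually inverse. The construction of $\Phi_*$ is essentially indicated in the paragraphs immediately preceding the theorem: given a pseudocycle $f: V^k \to X$, let $\mathrm{Bd}(f) := \bigcap_{\mathfrak{C} \subset V \text{ compact}} \overline{f(V \setminus \mathfrak{C})}$ denote its limit set, which by the definition of a pseudocycle lies in the image of a smooth map from an oriented manifold of dimension at most $k-2$. Proposition \ref{Prop:3.3.1} then furnishes a neighborhood $U$ of $\mathrm{Bd}(f)$ with $H_l(U;\mathbb{Z}) = 0$ for $l > k - 2$, and the long exact sequence of the pair $(X,U)$ yields a canonical isomorphism $H_k(X;\mathbb{Z}) \cong H_k(X,U;\mathbb{Z})$. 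Choosing a compact $\mathfrak{C} \subset V$ large enough that $f(V \setminus \mathfrak{C}) \subset U$, the restriction $f|_\mathfrak{C}$ represents a relative cycle in $H_k(X,U;\mathbb{Z})$, and I would define $\Phi_*[f]$ as its image in $H_k(X;\mathbb{Z})$. Well-definedness with respect to $\mathfrak{C}$ and $U$ follows by comparing two choices via their common refinement and using naturality of the long exact sequence; invariance under pseudocycle cobordism is proved by running the same relative-homology argument on the cobordism itself.

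The construction of $\Psi_*$ is the harder direction, because not every integral homology class of a smooth manifold is representable by a map from a smooth closed oriented manifold. Starting from $\alpha \in H_k(X;\mathbb{Z})$, I would first replace it by a smooth singular cycle $c = \sum_i a_i \sigma_i$ with $a_i \in \mathbb{Z}$ and $\sigma_i : \Delta^k \to X$ smooth, which is possible since smooth singular homology computes the usual homology of a smooth manifold. The condition $\partial c = 0$ forces the codimension-one faces of the $\sigma_i$ to cancel in matched pairs with compatible orientations and multiplicities, and I would glue the $|a_i|$-many copies of each $\sigma_i$ along these matched faces to obtain a finite $\Delta$-complex $K_c$ together with a continuous map $\tilde{f} : K_c \to X$. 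The union of interiors of top-dimensional simplices and of codimension-one faces that are glued in a locally manifold fashion forms an open oriented smooth manifold $V_c \subset K_c$ of dimension $k$, while the singular locus $K_c \setminus V_c$ is a subcomplex of dimension at most $k - 2$. Smoothing $\tilde{f}|_{V_c}$ then produces a smooth map $f_c : V_c \to X$ whose limit set is contained in $\tilde{f}(K_c \setminus V_c)$, making $f_c$ a pseudocycle of dimension $k$; I set $\Psi_*(\alpha) := [f_c]$.

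To verify $\Phi_* \circ \Psi_* = \mathrm{Id}$, I would choose $U$ to contain $\tilde{f}(K_c \setminus V_c)$; then the relative class of $f_c|_{\mathfrak{C}}$ in $H_k(X,U;\mathbb{Z})$ agrees with the image of $c$ under $H_k(X;\mathbb{Z}) \to H_k(X,U;\mathbb{Z})$, so passing back through the isomorphism recovers $\alpha$. For $\Psi_* \circ \Phi_* = \mathrm{Id}$, given a pseudocycle $f$ with $\alpha := \Phi_*[f]$ represented by a smooth cycle $c$, the equality $[f|_\mathfrak{C}] = [c]$ in $H_k(X,U;\mathbb{Z})$ is witnessed by a relative chain, and applying the same gluing-and-smoothing construction to that chain yields an oriented smooth $(k+1)$-manifold $W$ and a smooth map $W \to X$ providing a pseudocycle cobordism between $f$ and $f_c$. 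Both composition identities thus reduce to chain-level computations mediated by the relative long exact sequence.

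The main obstacle will be the construction of $\Psi_*$, and in particular the verification that the singular locus of the glued complex $K_c$ genuinely has dimension at most $k-2$. This requires placing $c$ into a suitable general position via iterated barycentric subdivisions and perturbations of the boundary identifications, so that along any codimension-one face exactly two top-dimensional simplices meet with canceling orientations and all higher-order incidences occur only in codimension $\geq 2$. A secondary difficulty is to show that the equivalence class $[f_c] \in \mathcal{H}_k(X)$ depends only on $\alpha$ and not on $c$ or the smoothing; this is handled by the cobordism construction of the previous paragraph applied to a null-homology chain witnessing $c - c' = \partial d$.
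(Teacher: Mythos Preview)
The paper does not contain a proof of this theorem: it is stated purely as a citation of \cite[Thm.~1.1]{Zinger}, and the surrounding text only explains how the map $\Phi_*$ is used in the paper (via Proposition~\ref{Prop:3.3.1} and the relative long exact sequence), not how the full isomorphism is established. There is therefore nothing in the paper to compare your proposal against.

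That said, your outline is a faithful sketch of Zinger's actual argument. Your construction of $\Phi_*$ matches both the paper's description and Zinger's Lemma~3.5. Your construction of $\Psi_*$ via gluing a smooth singular cycle into a $\Delta$-complex whose non-manifold locus has codimension at least two is exactly Zinger's approach; the point you flag as the main obstacle (arranging that codimension-one faces are identified in pairs so the singular set drops to codimension two) is indeed where the work lies, and Zinger handles it by a careful combinatorial argument on the boundary cancellations rather than by barycentric subdivision and perturbation as you suggest. Your proposed verification of the two composition identities is also along the right lines, though in Zinger's treatment the identity $\Psi_*\circ\Phi_*=\mathrm{Id}$ is obtained somewhat more directly by triangulating the pseudocycle domain itself rather than by producing a cobordism from a relative chain.
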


\section{Virtual fundamental class via Kuranishi structure construction}\label{Sec:4}
We present Fukaya, Oh, Ohta, Ono's construction of the virtual fundamental class on $\overline{\mathcal{M}}_{0,k}(J,A)$ via Kuranishi structures. For a metrizable compact space $X$, let $(V_u, E_u, \Gamma_u, \psi_u, s_u)$ be the Kuranishi neighborhood at $u\in X$, where $V_u$ is a smooth manifold of finite dimension (possibly with boundary or corner), $E_u$ is a finite-dimensional vector space, $\Gamma_u$ is a finite group acting smoothly and effectively on $V_u$ with a linear representation on $E_u$, $s_u: V_u \rightarrow E_u$ is a $\Gamma_u$-equivariant smooth map, and $\psi_u$ is a homeomorphism from $s^{-1}_u(0)/\Gamma_u$ to a neighborhood of $u$ in $X$. We also denote $\mathcal{E}_u: E_u \times V_u \rightarrow V_u$ the \textbf{obstruction bundle} and $U_u \coloneqq V_u/\Gamma_u$.

Assume that the Kuranishi structure $\{V_u,E_u,\Gamma_u,\psi_u,s_u\}_{u\in X}$ on $X$ admits the tangent bundle corresponds to the isomorphisims $\Phi_{uv}:N_{V_u}V_v\cong E_u/E_v$ for $u,v\in X$, where $N_{V_u}V_v$ is the normal bundle of $V_v\subset V_u$. Let $h=\{h_u\}$ be a multisection. If $h$ is transverse to $0$, then $h^{-1}(0)$ admits a smooth triangulation of dimension $d = \dim V_p-\dim E_p$ \cite[Lem.~4.2,6.9]{FK}. Write the trangulation as $h^{-1}(0)=\bigcup_{\aleph=1}^{\mathfrak{m}}\sigma_\aleph(\Delta_d)$, where $\sigma_\aleph$ is a diffeomorphism from the interior of the $d$-simplex $\Delta_d$ to a submanifold of some $V_u$.

Let $h_u$ be a local representative of $h$ on $V_u$ with branches $h_{u,i}:V_u\rightarrow E_u$, $i=1,\ldots,l_{h_u}$. Let $m_\aleph$ be a integer such that, $i_1,\ldots,i_{m_\aleph}$ is the set of all indices $i$ with $h_{u,i}(v)=0$ for $v\in \sigma_\aleph(\Delta_d)$. For $j=1,\ldots,m_\aleph$, we have an isomorphism induced by $Dh_{u,i_j}$
\begin{equation}
  \label{eq:4.1.1}
    \det TV_u\otimes \det \mathcal{E}_u^{*}\cong \det T\sigma_\aleph(\Delta_d).
\end{equation}
With trivializations of $\det TV_u\otimes \det \mathcal{E}_u^{*}$ and $\det T\sigma_\aleph(\Delta_d)$, define $\delta_j = +1$ if the isomorphism is orientation-preserving and $\delta_j = -1$ otherwise. The multiplicity of $\sigma_\aleph(\Delta_d)$ is then defined as (see \cite[Sec.~4]{FK}):
\begin{equation}
  \label{eq:1}
  mul_{\sigma_\aleph(\Delta_d)}=\sum_{j=1}^{m_\aleph}\delta_j/l_{h_u}.
\end{equation}

Let $Y$ be a topological space. For a {\bf strongly continuous map} \cite[Def.~6.6]{FK}, authors of \cite{FK} prove that
\[f_{*}(h^{-1}(0))\coloneqq\sum_\aleph mul_{\sigma_\aleph(\Delta_d)}(f_{u})_{*}([\sigma_\aleph(\Delta_d)])\]
gives a $\mathbb{Q}$-singular cycle in $Y$, which represents a homology class $[f_{*}(h^{-1}(0))]\in H_d(Y;\mathbb{Q})$. $[f_{*}(h^{-1}(0))]$ is called a {\bf virtual fundamental class}, and it is independent of the choice of the triangulation and $h$. If $X$ has an oriented $n$-dimensional Kuranishi structure with boundary $\partial X=X_1\cup X_2$ then:
  \[[(f|_{X_1})_{*}(h_1^{-1}(0))]=[(f|_{X_2})_{*}(h_2^{-1}(0))]\in H_{d-1}(Y;\mathbb{Q}),\]
where $h_1$ and $h_2$ are generic multisections on $X_1$ and $X_2$ respectively. This is called the {\bf cobordism between Kuranishi structures}.

The special case $X=\overline{\mathcal{M}}_{0,k}(J,A)$ is indeed metrizable and compact. For each $u\in \overline{\mathcal{M}}_{0,k}(J,A)$, denote by $\Sigma_u$ the underlying curve of $u$ and $\Gamma_u$ the group of automorphisms of $\Sigma_u$, such that $u\circ\gamma=u$ for $\gamma\in \Gamma_u$. As proved in \cite{FK}, there exists a finite-dimensional $\Gamma_u$-invariant subspace $E_{J,u}\subset W^{r-1,p}(\Omega^{0,1}(\Sigma_u,u^{*}TM))$ such that $E_{J,u}\oplus \Img D_u\bar{\partial}_J=W^{r-1,p}(\Omega^{0,1}(\Sigma_u,u^{*}TM))$. Moreover, Elements of $E_{J,u}$ are supported away from the singular points of $\Sigma_u$.

For $u\in \overline{\mathcal{M}}_{0,k}(J,A)$, authors of \cite{FK} construct a smooth manifold $V_u$ parametrizing maps near $u$, including those corresponding to different strata. Denote by $\Sigma_T$ the genus zero singular curve modeled on a $k$-labeled tree $T$, and $\mathcal{M}_{0,T}(J,A)$ the space of $J$-holomorphic maps from $\Sigma_T$ to $M$ modulo the action of the automorphism group $\Gamma_T$.

Let $\nabla$ be the Levi-Civita connection on $M$. For the complex linear connection $\widetilde{\nabla}\coloneqq \nabla-\frac{1}{2}J(\nabla J)$, denote by $\Par^{hol}_{x,y}:T_xM\to T_yM$ the parallel transport induced by $\widetilde{\nabla}$. Assume that $u,v\in\mathcal{M}_{0,T}(J,A)$, then the map $\Par^{hol}_{x,y}$ induces a map $\Par^{hol}_{u,v}:u^{*}TM\to v^{*}TM$. We transport obstruction spaces: for $v$ near $u$, define $E_{J,v}\coloneqq \Par^{hol}_{u,v}E_{J,u}$.

\begin{definition}\label{Def:4.22.2}
  Let $\mathcal{E}_{u,map}$ be the bundle over $V_u$ with fiber $E_{J,v}$ at $v\in V_u$. For $v$ with the same underlying curve as $u$, define
  \[V_{u,map}\coloneqq\{v\in V_{u} \mid \dbar_Jv\equiv 0\mod E_{J,v}\}.\]
\end{definition}

Next, we discuss maps across strata. Denote by $V_{u,resolve}$ a finite dimensional vector space parametrizing curves in $\overline{\mathcal{M}}_{0,k}$ near $\Sigma_T$ that are obtained by gluing sphere components of $\Sigma_T$ near its nodal points. For $\alpha,\beta\in \Sigma_T$, let $S_\alpha,S_\beta$ be sphere components of $\Sigma_T$. According to \cite{FK},
\[V_{u,resolve}=\bigoplus_{\substack{z=z_{\alpha\beta}=z_{\beta\alpha}\\ \text{ is a nodal point }}}T_{z_{\alpha\beta}}S_\alpha\otimes T_{z_{\beta\alpha}}S_\beta.\]
Denote by $V_{u,deform}$ a finite dimensional vector space parametrizing deformations of the marked points of $\Sigma_T$. According to \cite{FK},
\[0\in V_{u,deform}\subset \prod_{\Sigma_\alpha\text{ is stable}}\mathbb{C}^{k_\alpha-3},\]
where $k_\alpha$ is the number of marked points on the stable component $\Sigma_\alpha$. Overall, $V_{u,resolve}\times V_{u,deform}$ parametrizes a neighborhood of $\Sigma_T$ in $\overline{\mathcal{M}}_{0,k}$, and $0\in V_{u,resolve}\times V_{u,deform}$ corresponds to $\Sigma_T$. 

Recall that the sections in $E_{J,u}$ are supported on a compact subset $K_{ob} \subset \Sigma_T$ that is disjoint from the singular points of $\Sigma_T$. For each curve $\Sigma_{T^{'}}$ in the neighborhood parametrized by $V_{u,\mathrm{resolve}} \times V_{u,\mathrm{deform}}$, there exists a compact subset $K_{ob}^{'} \subset \Sigma_{T'}$ that can be identified with $K_{ob}$. This identification is induced by the extra marked points on $\Sigma_T$ and $\Sigma_{T^{'}}$ given by the {\bf stabilization data} as in \cite[Def.~17.7~(1)(8)]{FOOO-T}. Now assume that the map $v$ is from the underlying curve $\Sigma_{T^{'}}$ to $M$. We can transport $E_{J,u}$ to $v$ using the parallel transport $\Par^{hol}_{x,y}:T_xM\to T_yM$ together with the identification between $K_{ob}$ and $K_{ob}^{'}$. We denote this transported vector space by $E_{J,v}\coloneqq \Par^{hol}_{u,v}E_{J,u}$.

Now for $(\mathbf{v},\mathfrak{a},\vartheta)\in V_{u,map}\times V_{u,resolve}\times V_{u,deform}$, there is a unique map $v:\Sigma_{T^{'}}\rightarrow M$ such that the underlying curve $\Sigma_{T^{'}}\in\overline{\mathcal{M}}_{0,k}$ corresponds to the parameter $(\mathfrak{a},\vartheta)\in V_{u,resolve}\times V_{u,deform}$, and $\dbar_Jv=\dbar_J\mathbf{v}$ with respect to the identification between $K_{ob}$ and $K_{ob}^{'}$. We denote this unique correspondence by a map $\mu:(\mathbf{v},\mathfrak{a},\vartheta)\mapsto v$.  The full neighborhood $V_u$ is then given by
\[V_u\coloneqq \mu(V_{u,map}\times V_{u,resolve}\times V_{u,deform}).\]

Let $\mathcal{E}_{J,u}\rightarrow V_u$ be the vector bundle with fiber $E_{J,v}$ at $v\in V_u$, and we call $\mathcal{E}_{J,u}$ the {\bf obstruction bundle}. For $(\mathbf{v},\mathfrak{a},\vartheta)\in V_{u,map}\times V_{u,resolve}\times V_{u,deform}$, according to \cite[Prop.~12.23]{FK}, there exists a continuous section $s_u:V_u\rightarrow \mathcal{E}_{J,u}$ such that:
  \begin{enumerate}
    \item $s_u(\mu(\mathbf{v},\mathfrak{a},\vartheta))= \bar{\partial}_{J}\mu(\mathbf{v},\mathfrak{a},\vartheta)$
    \item $s_u$ is $\Gamma_u$-equivariant
    \item If $\varphi_\gamma:\Sigma_{\mathfrak{a},\vartheta}\rightarrow \Sigma_{\gamma(\mathfrak{a}),\gamma(\vartheta)}$ is the biholomorphic map induced by $\gamma\in\Gamma_u$, then $u_{\gamma(\mathbf{v}),\gamma(\mathfrak{a}),\gamma(\vartheta)}\circ\varphi_\gamma=u_{v,\mathfrak{a},\vartheta}$
    \item $s_u(\mu(0))=0$
  \end{enumerate}
  Moreover, $\psi_u:s_u^{-1}(0)/\Gamma_u\rightarrow\overline{\mathcal{M}}_{0,k}(J,A)$ is the identity map. We can perturb $s_u$ to a transverse multisection $s^{'}_u$. Then $(\prod_{i=1}^k\ev_i)_{*}([s^{'-1}(0)])$ defines a cycle in $H(M^k;\mathbb{Q})$, which is independent of $J$, the choice of generic multisection $s^{'}$, and other auxiliary choices. This class is called the {\bf virtual fundamental class}.

\section{The proof}\label{Sec:5}
This section will prove the main theorems. Let $(M,\omega)$ be a $2n$ dimensional smooth symplectic manifold, and $J$ an $\omega$-compatible almost complex structure, homology class $A\in H_2(M;\mathbb{Z})$ with $\omega(A)>0$\label{Fix:C:2:2}, $[\omega]\in H^2(M;\mathbb{Z})$. Let $(J,Y)$ be a Donaldson pair of sufficiently large degree $D$ with constants $0<\theta_2<\theta_0<1$ as in Definition \ref{Def:3.14}, with $\ell=D\omega(A)\geq 3$. Take a constant $\theta_1$ with $\theta_2<\theta_1<\theta_0$, and recall definition \ref{Def:3.20} of the space $\mathcal{J}_{\ell+1}^{*}(M,Y;J,\theta_1)$. We start by constructing a Kuranishi structure over the moduli space $\overline{\mathcal{M}}_{0,k+\ell}(K,A,Y)$.

\subsection{Kuranishi structure on $\overline{\mathcal{M}}_{0,k+\ell}(K,A,Y)$}\label{Sec:5.1}

Let $K\in \mathcal{J}_{\ell+1}^{*}(M,Y;J,\theta_1)$ and $T$ be a $\ell$ stable $(k+\ell)$-labeled tree. Denote by $\Sigma_T$ the genus zero curve modeled on $T$. Let $r$ be a positive integer, $p>1$ and $r-2/p>1$. Recall that $\dbar_K$ is a section of the bundle $\mathcal{E}^{r,p}(T,M)\rightarrow W^{r,p}(\Sigma_T,M)$ with fiber $W^{r-1,p}(\Omega^{0,1}(\Sigma_T,u^{*}TM))$ at $u\in W^{r,p}(\Sigma_T,M)$, and $\overline{\mathcal{M}}_{0,k+\ell}(K,A,Y)$ is stratified by
\[\overline{\mathcal{M}}_{0,k+\ell}(K,A,Y)=\coprod_{T}\mathcal{M}_{0,T}(K,A,Y).\]
Let $\ev_i$ be the evaluation map of $i$-th marked point, then
\[\widetilde{\mathcal{M}}_{0,T}(K,A,Y)=\left(\bar{\partial}_K\times\prod_{i=k+1}^{k+\ell}\ev_i\right)^{-1}( \{0\}\times Y),\]
and
\[\mathcal{M}_{0,T}(K,A,Y)=\widetilde{\mathcal{M}}_{0,T}(K,A,Y)/\Gamma_T.\]
For each $\ell$ stable $k+\ell$-labeled tree $T$ \label{Fix:M2:32}and each $(u,{\bf z})\in \widetilde{\mathcal{M}}_{0,T}(K,A,Y)$, take the obstruction data $E_{K,u}$ with respect to $\dbar_K$ as in Section \ref{Sec:4}. We do not need the symmetric stabilization data here, since the underlying curve of $u$ is already stable. Define spaces $V_{u,map}$, $V_{u,resolve}$, $V_{u,deform}$ and the map $\mu$ with respect to $\dbar_K$ as in Section \ref{Sec:4}. Denote by
\[V_u\coloneqq \mu(V_{u,map}\times V_{u,resolve}\times V_{u,deform}).\]
Then there is a vector bundle
\[\mathcal{E}_{K,u}\rightarrow V_u\]
whose fiber at $v\in V_u$ is the vector space $\Par_{u,v}^{hol}E_{K,u}$. Moreover, there is a map $s_u:V_u\rightarrow \mathcal{E}_{K,u}$ such that
\[s_u(v)=\dbar_K(v),\ v\in V_u.\]
However, for a $v\in V_u$, since the images of its last $\ell$ marked points does not neccesarily lie in $Y$, the set $s_u^{-1}(0)$ is not $V_u/\Gamma\cap\overline{\mathcal{M}}_{0,k+\ell}(K,A,Y)$. This suggests that, the obstruction data of a Kuranishi structure on $\overline{\mathcal{M}}_{0,k+\ell}(K,A,Y)$ must contain location information of the last $\ell$ marked points.

Let $N_{Y,\epsilon}$\label{sym:NYepsilon} be the $\epsilon$-tubular neighborhood of $Y\subset M$, $N_MY$ be the normal bundle of $Y\subset M$ and $\pi_{N_MY}:N_MY\rightarrow Y$ be the {\bf projection}. $N_MY\rightarrow Y$ is a complex line bundle. There is a diffeomorphism
\[\beth_\epsilon:N_{Y,\epsilon}\rightarrow N_MY,\]
\label{Fix:M2:34}which sends $N_{Y,\epsilon}$ to a neighborhood of the zero section of $N_MY$.

By the definition of $\mathcal{J}_{\ell+1}^{*}(M,Y;J,\theta_1)$, \label{Fix:M2:35}$Y$ is $K$-complex. For
\[[(u,{\bf z})]\in \overline{\mathcal{M}}_{0,k+\ell}(K,A,Y)\]
with the underlying curve $\Sigma_u$,  according to Proposition 3.17 and 3.18, $u$ maps no nonconstant component of $\Sigma_u$ into $Y$. Moreover, $u$ intersects $Y$ with isolated intersections. Take a neighborhood $W_u\subset Y$ around $u(\Sigma_u)\cap Y$, such that each component of $W_u$ is diffeomorphic\label{Fix:M:6:4} to the unit ball of $\mathbb{R}^{2n-2}$. Then with a connection $\nabla$ on $M$ satisfying $\nabla J=0$, there is a map $\id_u$ identifying the  fiber of the line bundle $N_MY|_{W_u}\rightarrow W_u$ with $\mathbb{C}$.

\begin{figure}[htbp]
  \centering
  \includegraphics[width=0.8\textwidth]{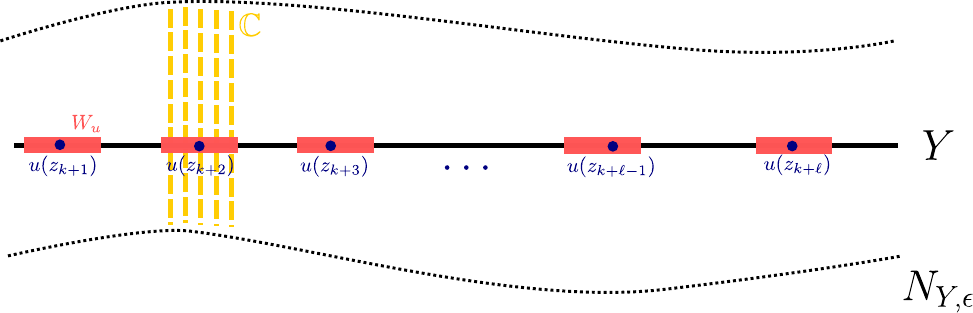}
  \caption{\small These short, thick, horizontal line segments denote components of $W_u$. The vertical lines visualizes fibers of the normal bundle $N_MY\to Y$ on a component of $W_u$.}
  \label{fig:tubularNbhd}\label{Fix:D:1:5}
\end{figure}

\begin{definition}\label{Def:5.1}
  For $u\in \overline{\mathcal{M}}_{0,k+\ell}(K,A,Y)$, assume that $V_u$ is sufficiently small such that for every $v\in V_u$ and $i=k+1,\ldots,k+\ell$, we have $\ev_i(v)\in N_{Y,\epsilon}$ and $\pi_{N_MY}\circ\beth_\epsilon\circ\ev_i(V_u)\subset W_u$. Define maps
  \[\ev_{\mathbb{C},u,i}: V_u\rightarrow\mathbb{C},\ i=k+1,\ldots,k+\ell\]
  by
  \[\ev_{\mathbb{C},u,i}\coloneqq \id_u\circ \ev_i,\ i=k+1,\ldots,k+\ell.\]
  And denote by\label{sym:evcu}
  \[\ev_{\mathbb{C},u}\coloneqq \prod_{i=k+1}^{k+\ell}\ev_{\mathbb{C},u,i}.\]
\end{definition}

\begin{remark}\label{RMK:5.2}
With this definition, we have
\[(s_u\times \ev_{\mathbb{C},u})^{-1}(0)/\Gamma_u=V_u/\Gamma_u\cap \overline{\mathcal{M}}_{0,k+\ell}(K,A,Y).\]
Since the underlying curve $\Sigma_u$ of $u$ is stable, $\Gamma_u$ is a trivial group.
\end{remark}

\begin{theorem}\label{Thm:5.12}
  There exists an oriented Kuranishi structure
  \[\{(V_u,E_{K,u}\times\mathbb{C}^\ell,\Gamma_u,\psi_u,s_u\times \ev_{\mathbb{C},u})\}_{u\in\overline{\mathcal{M}}_{0,k+\ell}(K,A,Y)}\]
  over the space $\overline{\mathcal{M}}_{0,k+\ell}(K,A,Y)$, where $\Gamma_u=\{0\}$ by Remark \ref{RMK:5.2}, and
  \[\psi_u:(s_u\times \ev_{\mathbb{C},u})^{-1}(0)\rightarrow V_u\cap \overline{\mathcal{M}}_{0,k+\ell}(K,A,Y)\]
  is the identity map.
\end{theorem}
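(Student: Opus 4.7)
The plan is to build on Fukaya-Ono's Kuranishi construction from Section~\ref{Sec:4} by augmenting each chart with a trivial $\mathbb{C}^\ell$ factor that cuts out the $Y$-incidence condition on the last $\ell$ marked points. Remark~\ref{RMK:5.2} makes the strategy transparent: once I have a Fukaya-Ono-type Kuranishi chart $(V_u,\mathcal{E}_{K,u},\Gamma_u,s_u)$ for the unconstrained problem on $\overline{\mathcal{M}}_{0,k+\ell}(K,A)$, appending $\ev_{\mathbb{C},u}$ to the section identifies the zero set with $V_u\cap\overline{\mathcal{M}}_{0,k+\ell}(K,A,Y)$, so $\psi_u$ becomes the identity automatically.

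First, fix $u\in\overline{\mathcal{M}}_{0,k+\ell}(K,A,Y)$ and run the construction of Section~\ref{Sec:4} with $\dbar_J$ replaced by $\dbar_K$: choose a $\Gamma_u$-invariant obstruction space $E_{K,u}\subset W^{r-1,p}(\Omega^{0,1}(\Sigma_u,u^{*}TM))$ complementary to $\mathrm{Im}\,D_u\dbar_K$ and supported away from the nodes, parallel-transport it via $\Par^{hol}$ to build $\mathcal{E}_{K,u}\to V_u$, and assemble $V_u=\mu(V_{u,map}\times V_{u,resolve}\times V_{u,deform})$ with $s_u=\dbar_K\circ\mu$. Because $\Sigma_u$ is $\ell$-stable with $k+\ell$ marked points, $\Gamma_u=\{0\}$ and no symmetric stabilization data are needed; the base chart inherits all of Fukaya-Ono's smoothness and local compatibility properties.

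Next, I would check that $\ev_{\mathbb{C},u}$ is well-defined and smooth. Proposition~\ref{Prop:3.16} together with the fact that $Y$ is $K$-complex ensures that $u$ meets $Y$ in isolated points hit precisely by the last $\ell$ marked points; shrinking $V_u$ so that $\ev_i(V_u)\subset N_{Y,\epsilon}$ and $\pi_{N_MY}\circ\beth_\epsilon\circ\ev_i(V_u)\subset W_u$ for $i=k+1,\dots,k+\ell$ is therefore possible, and smoothness of $\ev_i$, $\beth_\epsilon$ and $\id_u$ makes $\ev_{\mathbb{C},u}\colon V_u\to\mathbb{C}^\ell$ smooth. For the coordinate changes, the base embedding $\phi_{uv}\colon V_v\hookrightarrow V_u$ and bundle map $\mathcal{E}_{K,v}\hookrightarrow\mathcal{E}_{K,u}$ come from Fukaya-Ono; the new $\underline{\mathbb{C}}^\ell$ factor requires comparing the two trivializations $\id_u,\id_v$ of $N_MY$ over a common neighborhood of the $Y$-intersections, whose ratio is a smooth $(\mathbb{C}^{*})^\ell$-valued transition. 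Composing this with $\widehat{\phi}_{uv}$ yields a valid Kuranishi coordinate change, under which $\ev_{\mathbb{C},u}\circ\phi_{uv}$ matches the transported $\ev_{\mathbb{C},v}$. Orientability then follows from the complex structures on $V_{u,map}$ (together with Fukaya-Ono's orientation on the real index piece), $V_{u,resolve}$, $V_{u,deform}$, $\mathcal{E}_{K,u}$, and the added $\underline{\mathbb{C}}^\ell$; the dimension count gives virtual dimension $2n+2c_1(A)+2k-6$, matching the expected dimension of $\mathcal{M}_{0,k+\ell}(K,A,Y)$.

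The main obstacle I anticipate is the compatibility of the trivializations $\id_u$ across charts at lower-stratum points, where bubble components of $\Sigma_v$ introduce $Y$-intersections that must lie inside $W_u$ after gluing. The $\ell$-stability hypothesis together with Proposition~\ref{Prop:3.16} keep these intersections isolated and transverse, so shrinking $V_u$ until $W_u$ contains every nearby $Y$-intersection produces smooth transition data and a genuine cocycle; the remainder is a routine extension of Fukaya-Ono's machinery.
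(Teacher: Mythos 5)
Your proposal is correct and follows essentially the same path as the paper's proof: both take the Fukaya--Ono chart $(V_u,\mathcal{E}_{K,u},\Gamma_u,s_u)$ built with $\dbar_K$, append the factor $\mathbb{C}^\ell$ via $\ev_{\mathbb{C},u}$ so that Remark~\ref{RMK:5.2} makes $\psi_u$ the identity, and define the coordinate change by taking the FK part on $\mathcal{E}_{K,u}$ and a transition on the $\mathbb{C}^\ell$ factor coming from the connection-induced identification of fibers of $N_MY$. The one point where the paper is cleaner than your sketch is the orientation check: rather than appealing in general terms to complex structures on the various pieces, the paper observes that in the normal-bundle isomorphism entering the tangent-bundle condition, the quotient
\[
\frac{\mathcal{E}_{K,u}\times\mathbb{C}^\ell}{\mathcal{E}_{K,u'}\times\mathbb{C}^\ell}\;\cong\;\frac{\mathcal{E}_{K,u}}{\mathcal{E}_{K,u'}}
\]
is the same as in the unconstrained case, so orientability reduces directly to the Fukaya--Ono argument for $\dbar_K$; your ``complex structures plus FK's real-index orientation'' phrasing is pointing at the same fact but is vaguer, and $V_{u,map}$ itself carries no complex structure, only the index bundle does. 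Your remark about checking compatibility of the trivializations $\id_u$ across strata (and that $\ell$-stability plus Proposition~\ref{Prop:3.16} keep the $Y$-intersections transverse and isolated after shrinking) is a real concern the paper leaves implicit, so flagging it is good practice even if it does not change the argument.
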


\begin{proof}
  Let $(V_u,E_{K,u}\times\mathbb{C}^\ell,\Gamma_u,\psi_u,s_u\times \ev_{\mathbb{C},u})$ and $(V_{u^{'}},E_{K,u^{'}}\times\mathbb{C}^\ell,\Gamma_{u^{'}},\psi_{u^{'}},s_{u^{'}}\times \ev_{\mathbb{C},u^{'}})$ be two Kuranishi neighborhoods, where $u,u^{'}\in \overline{\mathcal{M}}_{0,k+\ell}(K,A,Y)$ and
  \[u^{'}\in (s_u\times \ev_{\mathbb{C},u})^{-1}(0).\]
  The coordinate change $\Phi_{uu^{'}}=(\hat{\phi}_{uu^{'}},\phi_{uu^{'}},h_{uu^{'}})$ is defined as
  \begin{enumerate}
    \item $h_{uu^{'}}:\Gamma_{u^{'}}\rightarrow \Gamma_u$ is the isomorphism between two trivial groups.
    \item $\phi_{uu^{'}}:V_{u^{'}}\cap V_u\rightarrow V_u$ is a smooth embedding as Fukaya's original construction in \cite{FK}.
    \item The map
          \[\hat{\phi}_{uu^{'}}:\mathcal{E}_{K,u^{'}}\times\mathbb{C}^\ell\rightarrow \mathcal{E}_{K,u}\times\mathbb{C}^\ell\]
          has two parts. The part from $\mathcal{E}_{K,u^{'}}$ to $\mathcal{E}_{K,u}$ is defined as in \cite{FK}. The other part, mapping $\mathbb{C}^\ell$ to $\mathbb{C}^\ell$, is given by the coordinate change on the normal bundle $N_Y$ induced by the connection $\nabla$ over $M$. Therefore
          \[\hat{\phi}_{uu^{'}}\circ (s_{u^{'}}\times \ev_{\mathbb{C},u^{'}})=(s_u\times \ev_{\mathbb{C},u})\circ\phi_{uu^{'}}.\]
  \end{enumerate}
  Now we verify that the coordinate change satisfies the following conditions:
  \begin{enumerate}
    \item $\dim V_u-\mathrm{rank}(E_u\times \mathbb{C}^\ell)=\dim V_u-\mathrm{rank}E_u-\ell$ is therefore independent of $u$.
    \item The compatible conditions regarding the actions of $\Gamma_u$ and $\Gamma_{u^{'}}$ are true, since $\Gamma_u$ and $\Gamma_{u^{'}}$ are trivial groups.
  \end{enumerate}

  This Kuranishi structure $\{(V_u,E_{K,u}\times\mathbb{C}^\ell,\Gamma_u,\psi_u,s_u\times \ev_{\mathbb{C},u})\}$ is oriented, \label{Fix:M2:36}since the normal bundle $N_{V_u}V_{u^{'}}$ of $V_{u^{'}}$ in $V_u$ satisfies
  \[N_{V_u}V_{u^{'}}\cong \frac{\ker(\pi_{K,u}\circ D_u\dbar_K)}{\ker(\pi_{K,u^{'}}\circ D_{u^{'}}\dbar_K)},\]
  where $\pi_{J,u}$ denotes the projection
  \[\pi_{J,u}:W^{r-1,p}(\Omega^{0,1}(\Sigma_u,u^{*}TM))\rightarrow \frac{W^{r-1,p}(\Omega^{0,1}(\Sigma_u,u^{*}TM))}{E_{J,u}}.\]
  The quotient vector bundle
  \[\frac{\mathcal{E}_{K,u}\times \mathbb{C}^\ell}{\mathcal{E}_{K,u^{'}}\times \mathbb{C}^\ell}\cong\frac{\mathcal{E}_{K,u}}{\mathcal{E}_{K,u^{'}}}\]
  are only related to $\dbar_K$. The right-hand side of the above equation is the case proved by Fukaya and Ono in \cite{FK}.
\end{proof}

\begin{theorem}\label{Thm:5.15}
  For $K_0,K_1\in\mathcal{J}_{\ell+1}^{*}(M,Y;J,\theta_1)$ in the same connected component, there is a path
  \[K_t:[0,1]\rightarrow \mathcal{J}_{\ell+1}^{*}(M,Y;J,\theta_1)\]
  connecting $K_0$ and $K_1$ such that, one can construct an oriented Kuranishi structure with boundary over the space $\mathcal{W}_{k+\ell}(A,\{K_t\}_t,Y)$, where
  \[\mathcal{W}_{k+\ell}(A,\{K_t\}_t,Y)\coloneqq \{(t,u)|t\in[0,1],u\in\overline{\mathcal{M}}_{0,k+\ell}(K_t,A,Y)\}.\]
  Therefore $\overline{\mathcal{M}}_{0,k+\ell}(K_0,A,Y)$ and $\overline{\mathcal{M}}_{0,k+\ell}(K_1,A,Y)$ are cobordant.
\end{theorem}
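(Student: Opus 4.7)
The plan is to build a one-parameter family version of the Kuranishi structure from Theorem \ref{Thm:5.12}, where the extra parameter is the path variable $t \in [0,1]$. Since $K_0$ and $K_1$ lie in the same connected component, a smooth path $\{K_t\}_{t\in[0,1]}\subset \mathcal{J}_{\ell+1}^{*}(M,Y;J,\theta_1)$ exists; I would first argue that the parametrized moduli space $\mathcal{W}_{k+\ell}(A,\{K_t\}_t,Y)$ is compact. Compactness follows from Gromov compactness applied uniformly along $t$: the energy is uniformly bounded by $\omega(A)$, each $K_t$ lies in $\mathcal{J}^{*}(M,Y;J,\theta_1,E_\ell)$, so Proposition \ref{Prop:3.16} forces every bubble component to meet $Y$ in at least three points, preventing the last $\ell$ marked points from escaping to ghost bubbles and keeping all limit curves $\ell$-stable.

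Next, for each $(t_0, u) \in \mathcal{W}_{k+\ell}(A,\{K_t\}_t,Y)$ I would construct a Kuranishi neighborhood whose base is a local product of an interval around $t_0$ with a deformation space of $u$. Concretely, I choose an obstruction space $E_{K_{t_0},u}$ as in Section \ref{Sec:4} large enough to surject onto $\coker(D_u \dbar_{K_{t_0}})$ and, by upper semicontinuity of the cokernel, onto $\coker(D_u \dbar_{K_t})$ for $t$ near $t_0$; this is possible because $K_t$ varies smoothly in $t$. Using the same holomorphic parallel transport $\mathrm{Par}^{hol}$ both in the map direction (as in Section \ref{Sec:4}) and across the $t$-direction, I transport $E_{K_{t_0},u}$ to $E_{K_t,v}$, producing an obstruction bundle $\mathcal{E}_{\{K_t\},u}\to V_{(t_0,u)}$. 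The Kuranishi chart is then
\[
\bigl(V_{(t_0,u)},\; E_{K_{t_0},u}\times \mathbb{C}^\ell,\; \Gamma_u,\; \psi_{(t_0,u)},\; s_{(t_0,u)}\times \ev_{\mathbb{C},u}\bigr),
\]
where $s_{(t_0,u)}(t,v) = \dbar_{K_t}(v)$ and $\ev_{\mathbb{C},u}$ is defined exactly as in Definition \ref{Def:5.1}. When $t_0 \in \{0,1\}$ the chart is a half-chart (boundary chart), giving the boundary structure; the restrictions at $t=0$ and $t=1$ recover precisely the Kuranishi charts from Theorem \ref{Thm:5.12} for $K_0$ and $K_1$, respectively.

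The coordinate changes between charts at $(t_0,u)$ and $(t_0',u')$ with $u'\in (s_{(t_0,u)}\times \ev_{\mathbb{C},u})^{-1}(0)$ are assembled from three ingredients: Fukaya--Ono's original change-of-chart for the $\dbar$-section in the map direction, the identity in the $t$-direction, and the change-of-chart on $\mathbb{C}^\ell$ induced by the connection on $N_M Y$, exactly as in the proof of Theorem \ref{Thm:5.12}. The virtual dimension is independent of the base point since we have merely added one unit to both $\dim V$ (the $t$-parameter) and kept the obstruction rank fixed. Orientation is inherited by the same argument as in Theorem \ref{Thm:5.12}: the normal bundle identification
\[
N_{V_{(t_0,u)}} V_{(t_0',u')} \cong \frac{\ker(\pi_{K_{t_0},u}\circ D_u\dbar_{K_{t_0}})}{\ker(\pi_{K_{t_0'},u'}\circ D_{u'}\dbar_{K_{t_0'}})}
\]
is untouched by the extra $\mathbb{C}^\ell$ factor, and the $t$-direction adds a canonically oriented $\mathbb{R}$-factor.

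The main obstacle I expect is the careful choice of obstruction data that is compatible across both the $t$-direction and the change-of-strata direction simultaneously. Specifically, one must show that a single $E_{K_{t_0},u}$ can be selected (after possibly enlarging) so that the transported space $E_{K_t,v}$ remains transverse to the image of the linearization for all nearby $(t,v)$ and for all strata near $u$. This is a semicontinuity and smooth-dependence argument on the family $\{D_v \dbar_{K_t}\}$, standard in Kuranishi theory but delicate when combined with the stabilization data identifying the supports $K_{ob}\subset \Sigma_u$ with $K_{ob}'\subset \Sigma_{v}$ across nodal degenerations. Once this uniform transversality is established, the cobordism statement follows immediately by interpreting the $t=0$ and $t=1$ boundary strata of the resulting Kuranishi structure.
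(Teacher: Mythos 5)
Your overall strategy --- build a parametrized Kuranishi structure on $\mathcal{W}_{k+\ell}(A,\{K_t\}_t,Y)$ with $t$ as an extra coordinate and read off the boundary charts at $t=0,1$ --- is the same as the paper's, which simply invokes Theorem 17.11 of \cite{FK} and then spells out the obstruction data. However, you flag "the careful choice of obstruction data that is compatible across both the $t$-direction and the change-of-strata direction" as the main obstacle and leave it unresolved, and this is precisely where the paper spends its effort. A single transported obstruction space $E_{K_{t_0},u}$ per chart, as you propose, does not by itself yield well-defined coordinate changes: on overlaps one needs the usual inclusion of obstruction spaces, and the standard device providing it is to take a finite cover $\{\mathfrak{V}_{u_i}\}$ of $\mathcal{W}_{k+\ell}(A,\{K_t\}_t,Y)$ and define $E_{K_t,u}$ as the direct sum, over all $i_l$ with $u\in\overline{\mathfrak{V}}_{u_{i_l}}$, of the transported spaces $\exp_{u_{i_l},u}(E^{'}_{K_{t_{i_l}},u_{i_l}})$. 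That construction is exactly what the paper writes out, and it needs to be stated (or at least cited) rather than acknowledged as a delicate but standard point.

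You also omit a technical choice the paper makes that protects the boundary: the path $\{K_t\}$ is taken constant on $[0,\varepsilon]$ and on $[1-\varepsilon,1]$, and the interior neighborhoods $\mathfrak{V}_u$ with $t\in(\varepsilon,1-\varepsilon)$ are chosen so that $\overline{\mathfrak{V}}_u$ misses the slices $t=0,1$. With the direct-sum construction, this guarantees that the obstruction data at a boundary point $(0,u)$ only aggregates $K_0$-obstruction spaces (and similarly for $t=1$), so the boundary restriction is a genuine Kuranishi structure on $\overline{\mathcal{M}}_{0,k+\ell}(K_0,A,Y)$ built from $\dbar_{K_0}$ alone. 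Without forcing the path to be constant near the endpoints, the direct sum at the boundary could mix contributions from several different $K_t$, and your claim that the restrictions "recover precisely the Kuranishi charts from Theorem \ref{Thm:5.12}" is not supported. The remaining items in your proposal (compactness via Gromov compactness and Proposition \ref{Prop:3.16}, the coordinate changes on the $\mathbb{C}^\ell$ factor as in Theorem \ref{Thm:5.12}, and the orientation argument with the extra canonically oriented $\mathbb{R}$-factor from $t$) are all correct and match the intended construction.
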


\begin{proof}
  Take a small constant $0<\varepsilon<1$, and a path $\{K_t\}\subset \mathcal{J}_{\ell+1}^{*}(M,Y;J,\theta_1)$ connecting $K_0$ and $K_1$, such that
  \[K_t=K_0,\text{ if }t\in[0,\varepsilon],\quad K_t=K_1\text{ if }t\in [1-\varepsilon,1].\]

  According to Theorem \ref{Thm:5.12}, our oriented Kuranishi structure construction applied to the space $\overline{\mathcal{M}}_{0,k+\ell}(K_t,A,Y)$ for every $t\in [0,1]$. Therefore the proof of this theorem then goes exactly the same as Theorem 17.11 in \cite{FK}. For the convenience of readers, we repeat how the obstruction data is constructed here.

  For each $(u,t)$ with $u\in\overline{\mathcal{M}}_{0,k+\ell}(K_t,A,Y)$ and $t\in [0,\varepsilon]\cup[1-\varepsilon,1]$, take the obstruction data $E^{'}_{K_t,u}\times \mathbb{C}^\ell=E^{'}_{K_0,u}\times \mathbb{C}^\ell$ which is constant for $t\in [0,\varepsilon]$ and $E^{'}_{K_t,u}\times \mathbb{C}^\ell=E^{'}_{K_1,u}\times \mathbb{C}^\ell$ for $t\in [1-\varepsilon,1]$.

  For each $(u,t)$ with $t\in (\varepsilon,1-\varepsilon)$, also take \label{Fix:M2:37}an obstruction datum $E^{'}_{K_t,u}\times \mathbb{C}^\ell$. Since $\dbar_{K_t}\times \ev_{\mathbb{C},u}$ smoothly depends on $(u,t)$, there is a small neighborhood
  \[\mathfrak{V}_u\subset \mathcal{W}_{k+\ell}(A,\{K_t\}_t,Y)\]
  for every $u\in \mathcal{W}_{k+\ell}(A,\{K_t\}_t,Y)$, such that $E^{'}_{K_t,u}\times \mathbb{C}^\ell$ taken to $\hat{u}\in \overline{\mathfrak{V}}_u$ via the exponential map is still a valid obstruction data for $\hat{u}$. For $t\in (\varepsilon,1-\varepsilon)$ and $u\in \overline{\mathcal{M}}_{0,k+\ell}(K_t,A,Y)$, take $\mathfrak{V}_u$ small enough such that
  \[\overline{\mathfrak{V}}_u\cap \overline{\mathcal{M}}_{0,k+\ell}(K_t,A,Y)=\emptyset,\ t=0,1.\]

  Now take a finite cover $\{\mathfrak{V}_{u_i}\}_i$ of $\mathcal{W}_{k+\ell}(A,\{K_t\}_t,Y)$, then for each $(u,t)$ consider all $(u_{i_l},t_{i_l})$ with $u\in \overline{\mathfrak{V}}_{u_{i_l}}$ and define
  \[E_{K_t,u}\coloneqq \bigoplus_{l}\exp_{u_{i_l},u} (E^{'}_{K_{t_{i_l}},u_{i_l}}),\]
  where $\exp_{u_{i_l},u}$ is the exponential map from $u_{i_l}$ to $u$. $E_{K_t,u}\times \mathbb{C}^\ell$ for each $u\in \mathcal{W}_{k+\ell}(A,\{K_t\}_t,Y)$ is the desired obstruction data.
\end{proof}

\subsection{Pseudocycle and multisection on $\mathcal{M}_{0,k+\ell}(K,A,Y)$}\label{Sec:5.2}
Recall that ${\bf ev}_k\coloneqq \prod_{i=1}^k\ev_i$. According to Theorem \ref{Thm:3.25}, there exists a Baire set 
\[\mathcal{J}_{\ell+1}^{reg*}\subset \mathcal{J}_{\ell+1}^{*}(M,Y;J,\theta_1)\]
such that for $K\in \mathcal{J}_{\ell+1}^{reg*}$, the evaluation map
\[{\bf ev}_k:\mathcal{M}_{0,k+\ell}(K,A,Y)\rightarrow M^k\]
represents a pseudocycle.

On the other hand, the zeros of a generic multisection on the oriented Kuranishi structure $\{(V_u,E_{K,u}\times\mathbb{C}^\ell,\Gamma_u,\psi_u,s_u\times \ev_{\mathbb{C},u})\}$ defined in Theorem \ref{Thm:5.12}, with the evaluation map ${\bf ev}_k$, provides a virtual fundamental class. We point out that, since $\Gamma_u$ is trivial, this class lies in $H_{*}(M^k;\mathbb{Z})$.

In this section, we compare the pseudocycle and the virtual fundamental class described above, and aim to construct equivalence between them.

\begin{lemma}\label{Lemma:5.4}
  For $K\in \mathcal{J}_{\ell+1}^{*}(M,Y;J,\theta_1)$, take the oriented Kuranishi structure as in Theorem \ref{Thm:5.12}
  \[\{(V_u,E_{K,u}\times\mathbb{C}^\ell,\Gamma_u,\psi_u,s_u\times\ev_{\mathbb{C},u})\}_{u\in\overline{\mathcal{M}}_{0,k+\ell}(K,A,Y)}.\]
  Let $s^{'}=\{s_u^{'}\}$ be a multisection, where each $s_u^{'}$ is smooth. \label{Fix:M2:38}Assume that for a compact set $\mathfrak{C}\subset s^{'-1}(0)$, $s_u^{'}$ is transverse to $0$ in a neighborhood of $\mathfrak{C}\cap V_u$. Then there exists a multisection $h=\{h_u\}$ such that
  \begin{enumerate}
    \item $h$ is transverse to $0$,
    \item $h_u=s_u^{'}$ on a neighborhood of  $V_u\cap \mathfrak{C}$,
    \item $h_u$ can be chosen arbitrarily close to $s_u^{'}$ in $C^\infty$ topology on $V_u$.
  \end{enumerate}
\end{lemma}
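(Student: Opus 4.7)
The plan is to carry out a relative transversality argument for multisections on the Kuranishi structure from Theorem \ref{Thm:5.12}, closely following the Fukaya--Ono scheme from \cite[Sec.~3,6]{FK} but inserting a cutoff that freezes the perturbation on a neighborhood of $\mathfrak{C}$. First I fix open neighborhoods $\mathfrak{C}\subset U^{in}\subset \overline{U^{in}}\subset U^{out}$ of $\mathfrak{C}$ in $\overline{\mathcal{M}}_{0,k+\ell}(K,A,Y)$, where $U^{out}$ is the neighborhood of $\mathfrak{C}$ on which $s'$ is already transverse to $0$. By compactness of $\overline{\mathcal{M}}_{0,k+\ell}(K,A,Y)$ I extract a finite subcover $\{V_{u_1},\dots,V_{u_N}\}$ by Kuranishi neighborhoods and choose a subordinate smooth partition of unity $\{\chi_i\}$ together with a global cutoff $\rho$ satisfying $\rho\equiv 0$ on a neighborhood of $\mathfrak{C}$ contained in $U^{in}$ and $\rho\equiv 1$ outside $U^{out}$.

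Next I build the perturbation inductively along the natural partial order on Kuranishi charts (ordered by the embeddings $\phi_{uu'}$ of coordinate changes), starting from the charts of lowest dimension. On the base case, choose a generic finite-dimensional subspace of smooth sections of $\mathcal{E}_{K,u_i}\times\mathbb{C}^\ell$ supported where $\rho>0$; a multisection version of the Sard--Smale theorem (exactly as in \cite[Lem.~3.14]{FK}) guarantees that for a generic choice of coefficients the perturbed multisection $s'_{u_i}+\rho\cdot(\text{generic section})$ is transverse to $0$ on $V_{u_i}$. For the inductive step, on a chart $V_{u_i}$ that contains lower-dimensional charts $V_{u_j}$ already treated, the perturbation on $V_{u_j}$ is first transported to $V_{u_i}$ through the bundle map $\hat{\phi}_{u_i u_j}$ and then extended by adding further generic sections supported in $V_{u_i}\setminus \bigcup_j \phi_{u_i u_j}(V_{u_j})$, again multiplied by $\rho$. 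Because $\rho$ vanishes near $\mathfrak{C}$, this construction never alters $s'_u$ on a neighborhood of $V_u\cap \mathfrak{C}$, which gives condition (2) automatically; by taking the generic coefficients small one controls the $C^\infty$-distance to $s'_u$ and obtains (3).

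The verification of (1) is then the standard combinatorial transversality argument: on the open set $\{\rho>0\}$ transversality is achieved by the generic choice of the added sections, while on $\{\rho=0\}$, which lies inside $U^{out}$, transversality is inherited from $s'$ itself. Compatibility across coordinate changes follows because the perturbation on $V_{u_i}$ is by construction the push-forward via $\hat{\phi}_{u_i u_j}$ of the one on $V_{u_j}$ plus a term supported away from $\phi_{u_i u_j}(V_{u_j})$, which is exactly the compatibility required of a multisection.

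The only real obstacle is the inductive bookkeeping that reconciles the generic perturbation on each chart with the already-fixed perturbation on lower-dimensional charts and with $s'$ on the frozen region near $\mathfrak{C}$ at the same time; this is handled as in Fukaya--Ono's original construction, where the cutoff $\rho$ plays exactly the role of ensuring that the ``frozen'' region and the ``perturbed'' region are separated by an open set on which both constraints can be simultaneously imposed. All other steps are routine applications of Sard--Smale and of the fact that the space of smooth multisections is dense in the space of continuous ones.
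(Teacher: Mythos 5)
Your proposal is correct and follows the same Fukaya--Ono relative transversality scheme as the paper: an inductive construction over a finite good coordinate system ordered by rank of the obstruction bundle, with the perturbation frozen to agree with $s'$ near $\mathfrak{C}$ and chosen generically elsewhere via \cite[Lem.~3.14]{FK}, propagated to larger charts through the tubular-neighborhood identification. The one genuine variation is the freezing mechanism: you introduce a single global cutoff $\rho$ on the Kuranishi space vanishing near $\mathfrak{C}$ and multiply every added generic section by $\rho$, whereas the paper chooses, for each chart $V_{u_i}$, a neighborhood $N_{u_i}\supset\mathfrak{C}\cap V_{u_i}$ subject to the nesting condition $\overline{N}_{u_{j+1}}\cap V_{u_i}\subset\overline{N}_{u_i}$ and prescribes $h_{u_i}=s'_{u_i}$ on $\overline{N}_{u_i}$ directly through \cite[Lem.~3.14]{FK}. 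The cutoff version is conceptually streamlined --- it makes the compatibility of the ``frozen'' region across charts automatic --- but it does presuppose the existence of a $\Gamma_u$-invariant, coordinate-change-compatible cutoff on the Kuranishi structure, a fact that is true but that you should at least remark on; the paper's nested-neighborhood version avoids this by building the compatibility into the choices by hand. Two small infelicities: the partition of unity $\{\chi_i\}$ you introduce plays no role in the remainder of your argument, and the phrase ``supported in $V_{u_i}\setminus\bigcup_j\phi_{u_iu_j}(V_{u_j})$'' should rather say that the new generic sections vanish near $\phi_{u_iu_j}(V_{u_j})$, while the transported perturbation is extended to a tubular neighborhood of $\phi_{u_iu_j}(V_{u_j})$ via $\Phi_{u_{j+1}u_i}:N_{V_{u_{j+1}}}V_{u_i}\to E_{K,u_{j+1}}/E_{K,u_i}$, precisely as the paper spells out.
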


\begin{proof}
  Take a finite cover $\{V_{u_i}\}_{i=1}^{\mathfrak{l}}$ of $\overline{\mathcal{M}}_{0,k+\ell}(K,A,Y)$, and order it such that
  \[\mathrm{rank} E_{K,u_i}\leq \mathrm{rank} E_{K,u_{i+1}}.\]
  We construct $h$ by induction with respect to $i$.

  {\bf\small Induction step I.} For $i=1$, let $N_{u_1}$ be a \label{Fix:M2:39}open neighborhood of $\mathfrak{C}\cap V_{u_1}$ such that $\overline{N}_{u_1}\subset V_{u_1}$, and $s_{u_1}$ is transverse to $0$ in a neighborhood of $\overline{N}_{u_1}$. According to \cite[Lem.~3.14]{FK}, there exists a multisection $h_{u_1}$ on $V_{u_1}$ such that
  \begin{enumerate}
    \item $h_{u_1}$ is transverse to $0$,
    \item $h_{u_1}$ is sufficiently close to $s_{u_1}^{'}$ on $V_{u_1}$ in $C^\infty$ topology,
    \item $h_{u_1}=s_{u_1}$ on $\overline{N}_{u_1}$.
  \end{enumerate}

  {\bf\small Induction step II.} Assume that for a $1\leq j\leq \mathfrak{l}$, $h_{u_i}$ and the open neighborhood $N_{u_i}$ are constructed for all $1\leq i\leq j$. Then on $V_{u_{j+1}}$, let $N_{u_{j+1}}$ be an open neighborhood of $\mathfrak{C}\cap V_{u_{j+1}}$ such that $\overline{N}_{u_{j+1}}\subset V_{u_{j+1}}$, and $s_{u_{j+1}}$ is transverse to $0$ on a neighborhood of $\overline{N}_{u_{j+1}}$. Moreover, we ask that
  \[\overline{N}_{u_{j+1}}\cap V_{u_i}\subset \overline{N}_{u_i},\ 1\leq i\leq I.\]
  As in the proof of Theorem 6.4 in \cite{FK},
  \begin{enumerate}
    \item Take a tubular neighborhood $N_{V_{u_{j+1}}}V_{u_i}\subset V_{u_{j+1}}$ of $V_{u_i}\cap V_{u_{j+1}}$ for each $i\leq j$. Extend $h_{u_i}|_{V_{u_i}\cap V_{u_{j+1}}}$ to $N_{V_{u_{j+1}}}V_{u_i}$ by
          \[h_{u_{j+1}u_i}(v)\coloneqq  \Phi_{u_{j+1}u_i}(\rho)\oplus h_{u_i}(\tau).\]
    where $v\in N_{V_{u_{j+1}}}V_{u_i}$, and $v=(\tau,\rho)$ parametrizes $N_{V_{u_{j+1}}}V_{u_i}$ with $\tau\in V_{u_i}\cap V_{u_{j+1}}$ and $\rho$ denoting the normal direction of $N_{V_{u_{j+1}}}V_{u_i}$, map $\Phi_{u_{j+1}u_i}$ is the isomorphism (see \cite[Def.~5.9]{FK})
          \[\Phi_{u_{j+1}u_i}:N_{V_{u_{j+1}}}V_{u_i}\rightarrow E_{K,u_{j+1}}/E_{K,u_i}.\]
    \item According to \cite[Lem.~3.14]{FK}, there exists a multisection $h_{u_{j+1}}$ on $V_{u_{j+1}}$ such that
    \begin{enumerate}
      \item $h_{u_{j+1}}$ is transverse to $0$,
      \item $h_{u_{j+1}}$ is sufficiently close to $s_{u_{j+1}}^{'}$ on $V_{u_{j+1}}$ in $C^\infty$ topology,
      \item $h_{u_{j+1}}=s_{u_{j+1}}$ on $\overline{N}_{u_{j+1}}$.
      \item $h_{u_{j+1}}=h_{u_{j+1}u_i}$ on $N_{V_{u_{j+1}}}V_{u_i}$ for each $i\leq j$.
  \end{enumerate}
  \end{enumerate}
  This completes the construction.
\end{proof}

Let $T_r$ be the set consisting of all $\ell$-stable $(k+\ell)$-labeled trees $T$ with more than one node, and let
\[\mathcal{M}_{T_r}(K,A,Y)\coloneqq \coprod_{T\in T_r}\mathcal{M}_{0,T}(K,A,Y).\]

\begin{proposition}\label{Prop:5.5}
   Take $K\in \mathcal{J}_{\ell+1}^{reg*}$. Notice that ${\bf ev}_k(\mathcal{M}_{T_r}(K,A,Y))$ is a compact subset of $M^k$. For any open neighborhood $O$ of ${\bf ev}_k(\mathcal{M}_{T_r}(K,A,Y))$ in $M^k$, there exists a section $h=\{h_u\}$ of the obstruction bundle of the Kuranishi structure over  $\overline{\mathcal{M}}_{0,k+\ell}(K,A,Y)$, such that
  \begin{enumerate}
    \item $h$ is transverse to $0$,
    \item denote by ``$\bigtriangleup$'' the symmetric difference of sets, then
          \[\bigcup_{u\in \overline{\mathcal{M}}_{0,k+\ell}(K,A,Y)}\left({\bf ev}_k (h_u^{-1}(0))\bigtriangleup {\bf ev}_k ((s_u\times ev_{\mathbb{C},u})^{-1}(0))\right)\subset O.\]
  \end{enumerate}
\end{proposition}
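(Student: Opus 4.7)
The plan is to select a compact subset $\mathfrak{C}$ of the top stratum on which $s_u \times \ev_{\mathbb{C},u}$ is already transverse to $0$, and then apply Lemma \ref{Lemma:5.4} to perturb only outside a neighborhood of $\mathfrak{C}$. To locate $\mathfrak{C}$, I use that ${\bf ev}_k(\mathcal{M}_{T_r}(K,A,Y))$ is compact and $O$ is open: continuity of ${\bf ev}_k$ together with compactness of $\overline{\mathcal{M}}_{0,k+\ell}(K,A,Y)$ lets me pick nested open neighborhoods $\mathcal{N}' \Subset \mathcal{N}$ of $\mathcal{M}_{T_r}(K,A,Y)$ in the compactified moduli space with ${\bf ev}_k(\mathcal{N}) \subset O$. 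Setting $\mathfrak{C} \coloneqq \overline{\mathcal{M}}_{0,k+\ell}(K,A,Y) \setminus \mathcal{N}'$ then yields a compact subset of the top stratum $\mathcal{M}_{0,k+\ell}(K,A,Y)$.

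Next I would verify that $s_u \times \ev_{\mathbb{C},u}$ is transverse to $0$ on a neighborhood of $\mathfrak{C} \cap V_u$ inside each Kuranishi chart $V_u$. By the construction recalled in Section \ref{Sec:4}, $T_v V_{u,map}$ is identified with $(D_v\dbar_K)^{-1}(E_{K,v})$ for $v \in V_{u,map}$; when the underlying map of $v$ lies in the top stratum, the hypothesis $K \in \mathcal{J}_{\ell+1}^{reg*}$ forces $D_v\dbar_K$ to be surjective onto the full $W^{r-1,p}$ target, hence its restriction to $T_v V_{u,map}$ is surjective onto $E_{K,v}$. Regularity simultaneously makes the evaluations at the last $\ell$ marked points transverse to $Y$, so $\ev_{\mathbb{C},u}$ is submersive. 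Together, $s_u \times \ev_{\mathbb{C},u}$ is transverse to $0$ on a neighborhood of $\mathfrak{C} \cap V_u$, including at the glued smoothings parameterized by $V_{u,resolve}$ when $u$ itself sits on a boundary stratum.

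I then apply Lemma \ref{Lemma:5.4} with $s' = \{s_u \times \ev_{\mathbb{C},u}\}$ (viewed as a single-branch multisection) and this $\mathfrak{C}$, to obtain a transverse multisection $h = \{h_u\}$ agreeing with $s_u \times \ev_{\mathbb{C},u}$ on a neighborhood of $\mathfrak{C} \cap V_u$. On that neighborhood the two zero sets coincide, so the symmetric difference over the entire Kuranishi structure is contained in $\mathcal{N}'$, whose ${\bf ev}_k$ image lies in ${\bf ev}_k(\mathcal{N}) \subset O$, giving the required inclusion.

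The main obstacle I anticipate is the transversality verification at charts centered at boundary nodal maps: one must carefully track how top-stratum regularity of $\dbar_K$ on the full Sobolev space descends to transversality of the finite-dimensional model $s_u \times \ev_{\mathbb{C},u}$ relative to the zero section of $\mathcal{E}_{K,u} \times \mathbb{C}^\ell$ at glued smoothings, and confirm that the parallel-transported obstruction spaces $E_{K,v}$ remain a legitimate complement along the gluing parameter. Once this is established, the rest follows directly from the inductive perturbation in Lemma \ref{Lemma:5.4}.
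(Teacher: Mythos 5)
Your overall strategy mirrors the paper's: choose a compact subset $\mathfrak{C}$ of the top stratum where $s_u\times\ev_{\mathbb{C},u}$ is already transverse, and invoke Lemma~\ref{Lemma:5.4} to extend it to a globally transverse multisection agreeing with the original near $\mathfrak{C}$. The transversality discussion (using $K\in\mathcal{J}_{\ell+1}^{reg*}$ for surjectivity of $D_v\dbar_K$ and submersivity of the $Y$-evaluations on the top stratum) is consistent with what the paper implicitly relies on. Up to this point your argument matches the intent of the paper.

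However, your final step has a genuine gap. You claim that since $h_u=s_u\times\ev_{\mathbb{C},u}$ near $\mathfrak{C}$, "the symmetric difference over the entire Kuranishi structure is contained in $\mathcal{N}'$." This is not correct: the set $h_u^{-1}(0)$ away from the tubular neighborhood of $\mathfrak{C}\cap V_u$ consists of zeros of the \emph{perturbed} multisection, and these points are generically \emph{not} in $\overline{\mathcal{M}}_{0,k+\ell}(K,A,Y)$ at all, let alone in the open subset $\mathcal{N}'\subset\overline{\mathcal{M}}_{0,k+\ell}(K,A,Y)$. Controlling ${\bf ev}_k(h_u^{-1}(0)\setminus\mathfrak{C})$ requires condition~(3) of Lemma~\ref{Lemma:5.4}: one must explicitly choose $h_u$ so $C^\infty$-close to $s_u\times\ev_{\mathbb{C},u}$ that its zero set stays close to $V_u\cap\overline{\mathcal{M}}_{0,k+\ell}(K,A,Y)$, and then appeal to the continuity of ${\bf ev}_k$ and the openness of $O$. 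The paper's proof makes exactly this step: it first ensures $\overline{{\bf ev}_k(\overline{\mathcal{M}}_{0,k+\ell}(K,A,Y)\setminus\mathfrak{C})}\subset O$, then requires $h_u$ to be close enough that ${\bf ev}_k(v)\in O$ for all new zeros $v\in h_u^{-1}(0)\setminus\mathfrak{C}$. Your proposal controls only the zeros of $s_u\times\ev_{\mathbb{C},u}$ off $\mathfrak{C}$ (which indeed lie in $\mathcal{N}'$), but not the new zeros created by the perturbation; adding the closeness argument closes the gap.
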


\begin{proof}
  Take an open neighborhood $O$ in $M^k$ of ${\bf ev}_k(\mathcal{M}_{T_r}(K,A,Y))$ and a compact subset $\mathfrak{C}\subset \mathcal{M}_{0,k+\ell}(K,A,Y)$, such that the closure of ${\bf ev}_k\left(\overline{\mathcal{M}}_{0,k+\ell}(K,A,Y)\backslash \mathfrak{C}\right)$ in $M^k$ satisfies
  \[\overline{{\bf ev}_k\left(\overline{\mathcal{M}}_{0,k+\ell}(K,A,Y)\backslash \mathfrak{C}\right)}\subset O.\]
  By Lemma \ref{Lemma:5.4}, there exists a section $h=\{h_u\}$ such that
  \begin{enumerate}
    \item $h$ is transverse to $0$,
    \item $h_u=s_u\times ev_{\mathbb{C},u}$ in a tubular neighborhood of $V_u\cap \mathfrak{C}$, for all $u$,
    \item $h_u$ is sufficiently close to $s_u\times ev_{\mathbb{C},u}$ such that the zeros of $h_u$ are close enough to $V_u\cap \overline{\mathcal{M}}_{0,k+\ell}(K,A,Y)$ to ensure that
          \[{\bf ev}_k (v)\in O\text{ for all }v\in h_u^{-1}(0)\backslash\mathfrak{C}.\]
  \end{enumerate}
  With such a $h=\{h_u\}$, we have
  \begin{align*}
    &{\bf ev}_k (h_u^{-1}(0))\bigtriangleup{\bf ev}_k ((s_u\times ev_{\mathbb{C},u})^{-1}(0))\\
    \subset& {\bf ev}_k (h_u^{-1}(0)\backslash\mathfrak{C})\cup{\bf ev}_k ((s_u\times ev_{\mathbb{C},u})^{-1}(0)\backslash\mathfrak{C})\\
    \subset&O.
  \end{align*}
\end{proof}

Notably, if we take the homology class defined by the pesudocycle ${\bf ev}_k$ as described by A.Zinger in \cite{Zinger}, then we have the following result:

\begin{theorem}\label{Thm:5.14}
  For $K\in \mathcal{J}^{reg*}_{\ell+1}$, using the homology class induced by a pesudocycle as described in Theorem \ref{Thm:3.3.1}, then the pseudocycle
  \[{\bf ev}_k :\mathcal{M}_{0,k+\ell}(K,A,Y)\rightarrow M^k\]
  induces  the same homology class in $H_d(M^k;\mathbb{Z})$ as the virtual fundamental class.
\end{theorem}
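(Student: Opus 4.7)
The plan is to compare both homology classes inside the relative group $H_d(M^k,U;\mathbb{Z})$, where $U$ is a carefully chosen neighborhood of the image of the boundary strata, and then invoke the isomorphism $H_d(M^k,U;\mathbb{Z})\cong H_d(M^k;\mathbb{Z})$ from equation (\ref{eq:3.3.3}) to transfer the identification to absolute homology. The key observation is that for $K\in\mathcal{J}^{reg*}_{\ell+1}$ the section $s_u\times\ev_{\mathbb{C},u}$ is already transverse on the top stratum $\mathcal{M}_{0,k+\ell}(K,A,Y)$, so the Kuranishi multisection only needs to be perturbed near the lower strata $\mathcal{M}_{T_r}(K,A,Y)$; thus on a large compact core of the top stratum the two constructions coincide literally.

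More concretely, I would first fix a compact subset $\mathfrak{C}\subset\mathcal{M}_{0,k+\ell}(K,A,Y)$ large enough that ${\bf ev}_k$ represents the pseudocycle class on $\mathfrak{C}$ relative to a prescribed neighborhood of the pseudocycle limit set, as in Zinger's description around equation (\ref{eq:3.3.3}). Since the pseudocycle limit set is contained in ${\bf ev}_k(\mathcal{M}_{T_r}(K,A,Y))$ (by Gromov compactness in $\overline{\mathcal{M}}_{0,k+\ell}(K,A,Y)$), I apply Proposition \ref{Prop:3.3.1} to the smooth map representing $\mathcal{M}_{T_r}(K,A,Y)\to M^k$ (whose domain has dimension at most $d-2$ by standard stratum dimension count) to obtain an open neighborhood $U$ of ${\bf ev}_k(\mathcal{M}_{T_r}(K,A,Y))$ with $H_l(U;\mathbb{Z})=0$ for $l\geq d-1$. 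This yields the relative isomorphism $H_d(M^k,U;\mathbb{Z})\cong H_d(M^k;\mathbb{Z})$, and the pseudocycle class is represented in the left-hand side by the restriction ${\bf ev}_k|_{\mathfrak{C}}$.

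Next, I apply Proposition \ref{Prop:5.5} with $O=U$ to obtain a transverse multisection $h=\{h_u\}$ such that $h_u=s_u\times\ev_{\mathbb{C},u}$ on a neighborhood of $\mathfrak{C}\cap V_u$ and such that the symmetric difference of the two zero sets (pushed to $M^k$) is contained in $U$. On the open piece where $h$ agrees with $s\times\ev_{\mathbb{C}}$, the zero set is, after triangulation, precisely a triangulation of $\mathfrak{C}$; transversality and the regularity of $K$ force each branch multiplicity in (\ref{eq:1}) to be $+1$, and the orientation defined in Theorem \ref{Thm:5.12} (via the normal-bundle identification with $E_{K,u}/E_{K,u'}$ paired with the complex factor $\mathbb{C}^\ell$) matches the pseudocycle orientation on the top stratum, because both arise from the linearization of the same Cauchy-Riemann operator together with the complex evaluation maps $\ev_{\mathbb{C},u,i}$. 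Thus the VFC-chain and ${\bf ev}_k|_{\mathfrak{C}}$ represent the same singular chain in $M^k$ modulo chains supported in $U$, and therefore the same class in $H_d(M^k,U;\mathbb{Z})$.

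The main obstacle I anticipate is the orientation/multiplicity verification on the top stratum: one must show that the isomorphism (\ref{eq:4.1.1}) applied to $D_u\bar\partial_K\oplus D\ev_{\mathbb{C}}$ agrees, as an orientation, with the canonical pseudocycle orientation on $\mathcal{M}_{0,k+\ell}(K,A,Y)$ inherited from the regularity of $K$ and from the complex structure of the normal bundle $N_MY$. Once this is done branch-by-branch the multiplicity of every top-dimensional simplex inside $\mathfrak{C}$ is $+1/1=+1$, and the rest of the zero set of $h$ lies in ${\bf ev}_k^{-1}(U)$ by construction. Transferring the equality of classes in $H_d(M^k,U;\mathbb{Z})$ through the isomorphism (\ref{eq:3.3.3}) then yields the equality in $H_d(M^k;\mathbb{Z})$ claimed by the theorem.
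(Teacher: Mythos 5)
Your proposal follows essentially the same route as the paper's proof: pick a neighborhood $U$ of ${\bf ev}_k(\mathcal{M}_{T_r}(K,A,Y))$ via Proposition \ref{Prop:3.3.1} and Equation (\ref{eq:3.3.3}), take a compact core $\mathfrak{C}$ of the top stratum, invoke Proposition \ref{Prop:5.5} to produce a transverse multisection $h$ agreeing with $s_u\times\ev_{\mathbb{C},u}$ near $\mathfrak{C}$, and then observe that the two chains differ by something supported in $U$, so they agree in $H_d(M^k,U;\mathbb{Z})\cong H_d(M^k;\mathbb{Z})$. The one point you flag as an anticipated obstacle --- matching the Kuranishi orientation on the top stratum with the pseudocycle orientation and checking that the branch multiplicities are $+1$ where $h$ coincides with $s_u\times\ev_{\mathbb{C},u}$ --- is indeed real, but the paper handles it implicitly: on $\mathfrak{C}$ the obstruction bundle is zero-rank after cutting down by transversality, the single-branch multisection $h_u=s_u\times\ev_{\mathbb{C},u}$ gives $l_{h_u}=1$, and the Kuranishi orientation from Theorem \ref{Thm:5.12} is by construction the one induced by the determinant line of $D_u\bar\partial_K$ twisted by the complex factor $\mathbb{C}^\ell$, which is exactly what orients the regular moduli space $\mathcal{M}_{0,k+\ell}(K,A,Y)$ as a pseudocycle. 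So the step you worry about is more of a check than a gap, and the rest of your argument matches the paper's.
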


\begin{proof}
We choose an open neighborhood $O\subset M^k$ of ${\bf ev}_k(\mathcal{M}_{T_r}(K,A,Y))$ as in Proposition \ref{Prop:3.3.1} and Equation (\ref{eq:3.3.3}), and a compact set $\mathfrak{C}\subset \mathcal{M}_{0,k+\ell}(J_0,A,Y)$ as in the proof of Proposition \ref{Prop:5.5}. The evaluation map ${\bf ev}_k:\mathfrak{C}\rightarrow M^k$ defines a homology class 
\[[{\bf ev}_k]\in H_{2(n-3+k+c_1(A))}(M^k,O;\mathbb{Z})\cong H_{2(n-3+k+c_1(A))}(M^k;\mathbb{Z}).\]
The resulting class $[{\bf ev}_k]$ in $H_{2(n-3+k+c_1(A))}(M^k;\mathbb{Z})$ is independent of the choice of the neighborhood $O$ and the compact set $\mathfrak{C}$.

Now according to Proposition \ref{Prop:5.5}, there exists a section $h=\{h_u\}$, such that
\begin{enumerate}
  \item $h$ is transverse to $0$,
  \item $h_u=s_u\times ev_{\mathbb{C},u}$ in a tubular neighborhood of $V_u\cap \mathfrak{C}$, for all $u$,
  \item denote by ``$\bigtriangleup$'' the symmetric difference of sets, then
          \[\bigcup_{u\in \overline{\mathcal{M}}_{0,k+\ell}(K,A,Y)}\left({\bf ev}_k (h_u^{-1}(0))\bigtriangleup {\bf ev}_k ((s_u\times ev_{\mathbb{C},u})^{-1}(0))\right)\subset O.\]
\end{enumerate}
Therefore the following calculation of relative homology classes
\begin{equation*}
  [{\bf ev}_k:\mathfrak{C}\to M^k]-{\bf ev}_{k*}([h^{-1}(0)])=0\in H_{2(n-3+k+c_1(A))}(M^k,O;\mathbb{Z})
\end{equation*}
implies
\[[{\bf ev}_k:\mathfrak{C}\to M^k]={\bf ev}_{k*}([h^{-1}(0)])\text{ in }H_{2(n-3+k+c_1(A))}(M^k;\mathbb{Z}).\]
This completes the proof.
\end{proof}

\subsection{Second Kuranishi structure on $\overline{\mathcal{M}}_{0,k+\ell}(J_0,A)$ with modified obstruction data}\label{section:5.1}\label{Fix:M2:43}
According to Section \ref{Sec:3}, there exists {\bf domain independent} 
\[J_0\in B^{*}\subset \mathcal{J}_{\ell+1}^{*}(M,Y;J,\theta_1)\]
such that $Y$ is $J_0$-holomorphic and for sufficiently large $D$, the following conditions are satisfied (see Definition \ref{Def:3.20}, Proposition \ref{Prop:3.16}).
\begin{enumerate}
    \item All $J_0$-holomorphic spheres of energy at most $E_\ell$ contained in $Y$ are constant.
    \item Every nonconstant $J_0$-holomorphic sphere of energy at most $E_\ell$ in $X$ intersects $Y$ in at least $3$ distinct points in the domain,
\end{enumerate}
where $E_\ell=\ell/D$, $\ell=D\omega(A)$.

Let $\{(V_u,E_{J_0,u},\Gamma_u,\psi_u,s_u)\}$ be the Kuranishi structure on $\overline{\mathcal{M}}_{0,k}(J_0,A)$, $\Sigma_u$ the underlying curve of $u$, and $u^{-1}(Y)$ the preimage of $Y$ on $\Sigma_u$. In this section, we will modify the obstruction data $E_{J_0,u}$ on a $\epsilon$-neighborhood of $u^{-1}(Y)$, then obtain new obstruction data $E_{J_0,u,\epsilon}$ and another Kuranishi structure on $\overline{\mathcal{M}}_{0,k}(J_0,A)$.

Firstly we prove that, if one modifies $E_{J_0,u}$ on a sufficiently small neighborhood on $\Sigma_u$, the result will still be a complement of $\ker(\pi_{J_0,u}\circ D_u\bar{\partial}_{J_0})\subset W^{r,p}(\Sigma_u,M)$, where $\pi_{J_0,u}$ is the projection map
\[\pi_{J_0,u}:W^{r-1,p}(\Omega^{0,1}(\Sigma_u,u^{*}TM))\rightarrow \frac{W^{r-1,p}(\Omega^{0,1}(\Sigma_u,u^{*}TM))}{E_{J_0,u}}.\]
\begin{lemma}[F.Riesz \cite{Riesz}]
  \label{RieszLemma}
  If $X$ is a proper closed subspace of a Banach space $X_0$, then for all $0<\epsilon<1$, there exists $y\in X_0$ such that $\|y\|=1$ and $\|y-x\|\geq 1-\epsilon$ $(\forall x\in X)$.
\end{lemma}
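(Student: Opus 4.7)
The plan is to give the classical Riesz argument: since $X\subsetneq X_0$ is closed, pick any $z\in X_0\setminus X$ and set $d:=\dist(z,X)$, which is strictly positive because $X$ is closed. One then rescales a near-minimizer of the distance to produce the desired unit vector. Because the statement is a standard textbook result and the ingredients (closedness of $X$, nonempty complement, and the definition of infimum in a normed space) are already packaged in the hypothesis, there is no genuine obstacle; the only care needed is in choosing the approximation parameter so that $1-\varepsilon$ appears with the correct sign at the end.

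Concretely, I would proceed as follows. First fix $z\in X_0\setminus X$ and $d=\dist(z,X)>0$. Given $0<\varepsilon<1$, use the definition of $d$ as an infimum to choose $x_0\in X$ satisfying
\[
d\ \le\ \|z-x_0\|\ \le\ \frac{d}{1-\varepsilon}.
\]
Define
\[
y\ :=\ \frac{z-x_0}{\|z-x_0\|},
\]
so that $\|y\|=1$ automatically. For any $x\in X$, note that $x_0+\|z-x_0\|\,x\in X$ (since $X$ is a linear subspace), so by the definition of $d$,
\[
\bigl\|z-\bigl(x_0+\|z-x_0\|\,x\bigr)\bigr\|\ \ge\ d.
\]
Dividing by $\|z-x_0\|$ gives
\[
\|y-x\|\ =\ \frac{\|z-x_0-\|z-x_0\|\,x\|}{\|z-x_0\|}\ \ge\ \frac{d}{\|z-x_0\|}\ \ge\ 1-\varepsilon,
\]
which is the desired inequality.

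The only subtlety worth mentioning is why $d>0$: if $d$ were zero, then there would be a sequence in $X$ converging to $z$, contradicting the closedness of $X$ combined with $z\notin X$. Once this is noted, the argument above is purely algebraic and requires no further input. I would therefore expect the proof in the paper to mirror this three-line computation, possibly with slightly different notation for the approximation parameter.
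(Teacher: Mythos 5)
Your proof is correct, and it is the standard textbook argument for the Riesz lemma. The paper itself gives no proof of this statement — it simply cites F.\ Riesz's \emph{Functional Analysis} — so there is no in-paper argument to compare against; your three-step construction (pick $z\notin X$, choose a near-minimizer $x_0$ so that $\|z-x_0\|\le d/(1-\epsilon)$, normalize $y=(z-x_0)/\|z-x_0\|$ and use linearity of $X$ to absorb $x$ into the distance estimate) is exactly the classical one the citation points to.
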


\begin{lemma}\label{lemma3.6}
  Let $X,Y$ be two Banach spaces, and $\zeta$ be a Fredholm operator from $X$ to $Y$. Let $E$ be a finite dimensional subspace of $Y$, and $\mathcal{S}_E$ be the unit sphere of $E$. Suppose that $\Img\zeta\oplus E=Y$ and $E\cap \Img\zeta=\{0\}$. Then there exists a constant $c>0$ depending on $E$ and $\zeta$ such that for all finite dimensional subspace $E^{'}\subset Y$ with
  \[\max_{x\in\mathcal{S}_E}\dist(x,E^{'})\leq c,\]
  we have $\Img\zeta\oplus E^{'}=Y$, where $\dist(x,E^{'})$ denotes the distance between $x$ and $E^{'}$.
\end{lemma}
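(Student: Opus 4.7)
The plan is to exploit Fredholmness to produce a bounded projection onto $E$ and to show this projection remains surjective when restricted to any sufficiently close $E'$. Since $\zeta$ is Fredholm, $\Img{\zeta}$ is closed in $Y$, and the hypothesis $\Img{\zeta}\oplus E=Y$ is therefore a topological direct sum. Let $P\colon Y\to E$ denote the continuous projection along $\Img{\zeta}$ (continuity is automatic since $\Img{\zeta}$ is closed and $E$ is finite-dimensional), set $M:=\|P\|$, and fix $c:=1/(2M)$.

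The key step is to prove that $P|_{E'}\colon E'\to E$ is surjective whenever $\max_{x\in\mathcal{S}_E}\dist(x,E')\leq c$. For each $x\in\mathcal{S}_E$ I would pick $x'\in E'$ with $\|x-x'\|\leq c$; since $Px=x$, the estimate $\|x-Px'\|=\|P(x-x')\|\leq Mc=1/2$ says that every unit vector in $E$ lies within distance $1/2$ of the finite-dimensional (hence closed) subspace $P(E')\subset E$. If $P(E')$ were proper in $E$, Lemma \ref{RieszLemma} applied with $\epsilon=1/4$ would produce a unit vector in $E$ at distance $\geq 3/4$ from $P(E')$, contradicting the previous bound. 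Hence $P(E')=E$.

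The direct sum conclusion then follows painlessly: for any $y\in Y$, choose $e'\in E'$ with $Pe'=Py$, so $y-e'\in\ker P=\Img{\zeta}$, giving $Y=\Img{\zeta}+E'$. Because $\Img{\zeta}$ has codimension $\dim E$, the target identity $Y=\Img{\zeta}\oplus E'$ forces $\dim E'=\dim E$, which (together with surjectivity) upgrades $P|_{E'}$ to a bijection, whence $E'\cap\Img{\zeta}=\ker(P|_{E'})=\{0\}$. I do not expect a serious obstacle — the argument is standard functional analysis — the only point requiring care is calibrating $c$ (via $M=\|P\|$) so that Riesz's lemma on the finite-dimensional space $E$ cleanly rules out $P(E')$ being a proper subspace.
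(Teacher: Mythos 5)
Your proof is correct in substance and takes a genuinely different route from the paper's. The paper argues by contradiction directly on the Banach space $Y$: if $\Img\zeta + E' \neq Y$, Riesz's lemma yields a unit vector far from $\Img\zeta + E'$, and by projecting it along $\Img\zeta$ onto $E$ and normalising one obtains a contradiction with $\max_{x\in\mathcal{S}_E}\dist(x,E') < \dist(\mathcal{S}_E,\Img\zeta)$; the constant produced is $c = \dist(\mathcal{S}_E,\Img\zeta)$. You instead make the bounded projection $P\colon Y\to E$ along $\Img\zeta$ explicit and show $P(E')=E$, invoking Riesz's lemma only on the finite-dimensional space $E$; your constant is $1/(2\|P\|)$. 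The two constants are commensurable --- for finite-dimensional $E$ one in fact has $\dist(\mathcal{S}_E,\Img\zeta)=1/\|P\|$, so your $c$ is the paper's divided by $2$, the factor coming from the slack you put into Riesz. Your version is conceptually cleaner (the projection carries the whole decomposition), while the paper's is a touch more self-contained; either is fine. You could also replace Riesz on $E$ by compactness of $\mathcal{S}_E$, which gives a unit vector at distance exactly $1$ from the proper subspace $P(E')$ and avoids any $\epsilon$-bookkeeping.

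One small caveat: your last paragraph asserting the \emph{direct} sum is circular as written --- you invoke ``the target identity $Y=\Img\zeta\oplus E'$'' to deduce $\dim E'=\dim E$, which is exactly what would need proving. What your argument (and, it should be said, the paper's own proof) actually establishes is $Y=\Img\zeta+E'$; the intersection $E'\cap\Img\zeta$ need not be trivial if $\dim E'>\dim E$. The paper tacitly concedes this: Corollary \ref{corollary3.3} and the second half of Proposition \ref{Fix:M2:46} pass to a subspace $\hat E_\epsilon\subset E_\epsilon$ precisely to repair the directness. If you want the direct sum outright, you would have to add the hypothesis $\dim E'\leq\dim E$, after which surjectivity of $P|_{E'}$ forces equality of dimensions and hence $E'\cap\Img\zeta=\ker(P|_{E'})=\{0\}$; in the intended application $E'=E_\epsilon=\{(1-\chi_\epsilon)f : f\in E\}$ this holds automatically.
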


\begin{proof}
  Assume that $\Img\zeta\oplus E^{'}\neq Y$, then by Lemma \ref{RieszLemma} for any $\epsilon>0$ there exists a $y\in Y$, $\|y\|=1$ such that
  \[\dist(y,\Img\zeta\oplus E^{'})\geq 1-\epsilon.\]
  Since $\Img\zeta\oplus E=Y$, we can decompose $y$ as $y=l+e$, where $l\in\Img\zeta$ and $e\in E$. Therefore for any $w\in \Img\zeta\oplus E^{'}$
  \begin{equation*}
    1-\epsilon\leq \|y-w\|=\|l+e-w\|=\|e-(l-w)\|.
  \end{equation*}
  $w$ is an arbitrary element in $\Img\zeta\oplus E^{'}$, thus $l-w$ is also arbitrary in $\Img\zeta\oplus E^{'}$. We conclude that
  \[\dist(e,\Img\zeta\oplus E^{'})\geq 1-\epsilon.\]

  On the other hand
  \[1=\|y\|=\|l+e\|\geq\dist(e,\Img\zeta)=\|e\|\dist(\frac{e}{\|e\|},\Img\zeta)\geq\|e\|\dist(\mathcal{S}_E,\Img\zeta).\]
  which implies
  \[\|e\|\leq 1/\dist(\mathcal{S}_E,\Img\zeta). \]
  Therefore
  \begin{align*}
    \max_{x\in\mathcal{S}_E}\dist(x,E^{'})\geq \dist(\frac{e}{\|e\|},E^{'})=\frac{\dist(e,E^{'})}{\|e\|} \geq (1-\epsilon)\dist(\mathcal{S}_E,\Img\zeta).
  \end{align*}
  Since $\epsilon$ is arbitrary, we conclude that for any $E^{'}$ satisfying
  \[\max_{x\in\mathcal{S}_E}\dist(x,E^{'})<\dist(\mathcal{S}_E,\Img\zeta),\]
  \label{Fix:M2:44}we have $E^{'}\oplus \Img \zeta=Y$. Taking $c=\dist(\mathcal{S}_E,\Img\zeta)$ completes the proof.
\end{proof}

\begin{corollary}\label{corollary3.3}
  The condition $E\cap\Img\zeta=\{0\}$ is not necessary in Lemma \ref{lemma3.6}.
\end{corollary}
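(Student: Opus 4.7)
The plan is to reduce Corollary \ref{corollary3.3} directly to Lemma \ref{lemma3.6} by extracting from $E$ a subspace that does satisfy the trivial-intersection hypothesis. Since $E$ is finite dimensional and $\mathrm{Im}\,\zeta + E = Y$, basic linear algebra lets me pick a linear complement $\widetilde{E} \subset E$ of $E \cap \mathrm{Im}\,\zeta$ inside $E$: choose a basis of $E \cap \mathrm{Im}\,\zeta$, extend it to a basis of $E$, and let $\widetilde{E}$ be the span of the added basis vectors. Then $\widetilde{E} \cap \mathrm{Im}\,\zeta = \{0\}$ by construction, while the containment $E \subset \mathrm{Im}\,\zeta + \widetilde{E}$ together with $\mathrm{Im}\,\zeta + E = Y$ gives $\mathrm{Im}\,\zeta \oplus \widetilde{E} = Y$. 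So the full hypothesis of Lemma \ref{lemma3.6} applies to $\widetilde{E}$, producing a constant $\widetilde{c} = \dist(\mathcal{S}_{\widetilde{E}}, \mathrm{Im}\,\zeta) > 0$.

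Now I set $c \coloneqq \widetilde{c}$ and observe the only remaining point: the hypothesis of the corollary controls $\dist(x, E')$ for $x$ on the unit sphere of $E$, whereas Lemma \ref{lemma3.6} applied to $\widetilde{E}$ requires the analogous control on the unit sphere of $\widetilde{E}$. But since $\widetilde{E} \subset E$, one has $\mathcal{S}_{\widetilde{E}} \subset \mathcal{S}_E$, and therefore
\[\max_{x \in \mathcal{S}_{\widetilde{E}}} \dist(x, E') \;\leq\; \max_{x \in \mathcal{S}_E} \dist(x, E') \;\leq\; c = \widetilde{c}.\]
So whenever $E'$ satisfies the corollary's assumption, Lemma \ref{lemma3.6} applied to $\widetilde{E}$ yields $\mathrm{Im}\,\zeta \oplus E' = Y$, which is the desired conclusion.

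There is no real obstacle here; the only thing to watch is that the constant $c$ genuinely depends only on $\widetilde{E}$ and $\zeta$, which is automatic since it is furnished by Lemma \ref{lemma3.6}. In the intended application (to modifications of $E_{J_0,u}$ on an $\epsilon$-neighborhood of $u^{-1}(Y)$), this upgrade is what allows a priori redundancies in the chosen obstruction vectors without destroying surjectivity of $\pi_{J_0,u} \circ D_u\bar{\partial}_{J_0} + E'$.
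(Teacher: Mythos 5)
Your proof is correct and follows essentially the same approach as the paper: both pass to a subspace of $E$ that is a genuine direct complement of $\Img\zeta$, apply Lemma \ref{lemma3.6} to that subspace, and use the inclusion $\mathcal{S}_{\widetilde{E}}\subset\mathcal{S}_E$ to transfer the hypothesis on $E'$. The only minor difference is that the paper takes a supremum of $\dist(\mathcal{S}_{\hat E},\Img\zeta)$ over \emph{all} such complements $\hat E\subset E$ and uses $\hat c/2$ so that the resulting constant depends only on $E$ and $\zeta$, whereas you simply fix one complement $\widetilde{E}$ and take $c=\dist(\mathcal{S}_{\widetilde{E}},\Img\zeta)$; this is simpler and equally valid, since the statement is existential.
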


\begin{proof}
  Assume that $E\oplus \Img\zeta=Y$, then there exist a subspace $\hat{E}\subset E$ such that $\hat{E}\oplus \Img\zeta=Y$ and $\hat{E}\cap \Img\zeta=\{0\}$. Assume that
  \[\hat{c}\coloneqq \max_{\substack{\hat{E}\subset E,\\\hat{E}\oplus \Img\zeta=Y,\\\hat{E}\cap \Img\zeta=\{0\}}}\dist(\mathcal{S}_{\hat{E}},\Img\zeta).\]
  Then $\hat{c}$ depends only on $\zeta, E$, and there exists at least one $\hat{E}$ such that 
  \[\dist(\mathcal{S}_{\hat{E}},\Img\zeta)\geq\frac{\hat{c}}{2}.\]
  Apply lemma \ref{lemma3.6} to this $\hat{E}$ we complete the proof for $c=\frac{\hat{c}}{2}$.
\end{proof}

\begin{lemma}\label{lemma3.7}\label{Fix:M2:45}
  For a $k$-labeled tree $T$, let $\Sigma_T$ be the genus zero curve modeled on $T$, $\dbar_{J_0}:W^{1,p}(\Sigma_T,M)\rightarrow M$ be the Cauchy-Riemann operator with $p>2$. Let $D_u\dbar_{J_0}$ be the linearization of $\dbar_{J_0}$ at $u\in W^{1,p}(\Sigma_T,M)$. Denote by $N_{Y,2\epsilon}\subset M$ a $2\epsilon$-tubular neighborhood of $Y$ for some $\epsilon>0$.

  For $u\in W^{1,p}(\Sigma_T,M)$, denote by $\chi_\epsilon$ a cutoff function supported on $u^{-1}(N_{Y,2\epsilon})$ such that for any
  \[z=u^{-1}(y),y\in Y,\]
  $\chi_\epsilon\equiv 1$ on $u^{-1}(B_y(\epsilon))$. Suppose that $E\subset W^{0,p}(\Omega^{0,1}(\Sigma_T,u^{*}TM))$ is a finite dimensional subspace such that
  \[\Img D_u\bar{\partial}_{J_0}\oplus E=W^{0,p}(\Omega^{0,1}(\Sigma_T,u^{*}TM)),\]
  then for $\epsilon$ small enough the space
  \[ E_\epsilon\coloneqq\{(1-\chi_\epsilon)f:f\in E\} \]
  also satisfies
\[\Img D_u\bar{\partial}_{J_0}\oplus E_\epsilon=W^{0,p}(\Omega^{0,1}(\Sigma_T,u^{*}TM)).\]
\end{lemma}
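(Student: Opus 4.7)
The plan is to verify the hypothesis of Corollary \ref{corollary3.3}: for $\epsilon$ small, the subspace $E_\epsilon$ lies close enough to $E$ in $W^{0,p}(\Omega^{0,1}(\Sigma_T,u^{*}TM))$ that it still provides a complement to $\Img D_u\bar{\partial}_{J_0}$. Concretely, letting $c>0$ be the constant furnished by Corollary \ref{corollary3.3} applied to $\zeta=D_u\bar{\partial}_{J_0}$ and $E$, I would aim to show
\[\max_{x\in\mathcal{S}_E}\dist(x,E_\epsilon)\;\xrightarrow{\epsilon\to 0}\;0,\]
and then conclude by taking $\epsilon$ small enough that this maximum falls below $c$.

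First, I would fix a basis $\{e_1,\ldots,e_N\}$ of the finite-dimensional space $E$ and show $\|\chi_\epsilon e_i\|_{0,p}\to 0$ as $\epsilon\to 0$ for each $i$. The support of $\chi_\epsilon$ is contained in $u^{-1}(N_{Y,2\epsilon})$, which shrinks to $u^{-1}(Y)$ as $\epsilon\to 0$. In the setting of the Kuranishi construction on $\overline{\mathcal{M}}_{0,k}(J_0,A)$, the map $u$ carries no nonconstant sphere component into $Y$ by Proposition \ref{Prop:3.16}, so $u^{-1}(Y)$ is a finite set and in particular has Lebesgue measure zero. Dominated convergence, applied to $|\chi_\epsilon e_i|^{p}\le |e_i|^{p}\in L^{1}$, then yields $\|\chi_\epsilon e_i\|_{0,p}\to 0$.

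Second, I would promote this to a uniform estimate over $\mathcal{S}_E$. Every $f\in\mathcal{S}_E$ has a representation $f=\sum_i a_i e_i$ with coefficients bounded by a fixed constant $C$ (since $E$ is finite-dimensional and all norms on it are equivalent), giving
\[\dist(f,E_\epsilon)\le\|f-(1-\chi_\epsilon)f\|_{0,p}=\|\chi_\epsilon f\|_{0,p}\le C\sum_{i=1}^{N}\|\chi_\epsilon e_i\|_{0,p},\]
which is uniform in $f\in\mathcal{S}_E$ and tends to $0$ with $\epsilon$. Picking $\epsilon$ so that the right-hand side is less than $c$, Corollary \ref{corollary3.3} delivers the desired splitting $\Img D_u\bar{\partial}_{J_0}\oplus E_\epsilon=W^{0,p}(\Omega^{0,1}(\Sigma_T,u^{*}TM))$.

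The main obstacle I anticipate is the measure-zero property of $u^{-1}(Y)$ under the lemma's stated generality, where $u$ is only $W^{1,p}$; this is precisely the point where the hypotheses on $J_0$ from Proposition \ref{Prop:3.16} (no nonconstant component absorbed in $Y$ plus positivity of intersections) are indispensable. A secondary, routine bookkeeping step is confirming $\dim E_\epsilon=\dim E$ for small $\epsilon$: the restriction of $f\mapsto(1-\chi_\epsilon)f$ to $E$ differs from the identity by an operator whose norm tends to $0$ by the uniform estimate above, hence it is an isomorphism of $E$ onto $E_\epsilon$ for $\epsilon$ small.
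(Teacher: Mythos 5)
Your argument handles the non-ghost case correctly and in essentially the same spirit as the paper (reduce to Corollary~\ref{corollary3.3} via a quantitative $\max_{x\in\mathcal{S}_E}\dist(x,E_\epsilon)\to 0$ estimate), but it has a genuine gap at the step where you assert $u^{-1}(Y)$ has measure zero. Proposition~\ref{Prop:3.16} rules out \emph{nonconstant} components lying in $Y$, but it does not rule out a \emph{ghost} sphere $S_\alpha$ being mapped to a point of $Y$. This happens generically in $\overline{\mathcal{M}}_{0,k}(J_0,A)$ (e.g.\ a ghost bubble attached to a nonconstant sphere at a point of $u^{-1}(Y)$). In that case $S_\alpha\subset u^{-1}(Y)\subset u^{-1}(N_{Y,2\epsilon})$ for every $\epsilon>0$, so $\chi_\epsilon\equiv 1$ on $S_\alpha$, $u^{-1}(Y)$ has positive measure, your dominated-convergence claim fails, and $\|\chi_\epsilon e_i\|_{0,p}$ does \emph{not} tend to zero if $e_i$ has nonzero $L^p$ mass on $S_\alpha$. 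Consequently the reduction to Corollary~\ref{corollary3.3} breaks down entirely in that case: $E_\epsilon$ is not close to $E$, because its elements have been killed on a whole component.

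The paper's proof splits the argument accordingly. On non-ghost components it uses positivity and transversality of intersections with $Y$ (Proposition~\ref{Prop:3.16}) to confine the support of $\chi_\epsilon$ to small balls $B_z(2\epsilon)$, giving $\|\chi_\epsilon f\|_{L^p}\to 0$ there -- essentially your estimate, carried out componentwise. On ghost components it abandons the ``$E_\epsilon$ close to $E$'' strategy and instead observes that $D_u\bar{\partial}_{J_0}$ is already surjective on the relevant $(0,1)$-forms over the constant components, so $E_\epsilon$ vanishing identically there costs nothing toward the direct-sum claim. You need this second ingredient; without it the lemma is false in the generality stated (curves modeled on arbitrary $k$-labeled trees $T$, which include ghost bubbles). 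A fix would be to restrict $E$ at the outset to consist of forms supported away from all ghost components (which is consistent with how $E_{J_0,u}$ is chosen in the Fukaya--Ono construction, since it only needs to fill the cokernel, which lives on the nonconstant part), and then your dominated-convergence estimate goes through on the remaining components where $u^{-1}(Y)$ really is a finite set; but as written the measure-zero claim is unjustified.
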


\begin{proof}
  Assume that $z\in u^{-1}(Y)$ belongs to a non-ghost component $S_\alpha\subset \Sigma_T$. According to Lemma \ref{lemma3.6} and Corollary \ref{corollary3.3}, we only need to prove that 
  \[\max_{f\in\mathcal{S}_E}\dist(f,E_\epsilon)\]
  can be arbitrarily small when $\epsilon$ is small enough.

  Since $u|_{S_\alpha}$ is not constant and
  \[J_0\in B^{*}\subset \mathcal{J}_{\ell+1}^{*}(M,Y;J,\theta_1),\]
  we have that $u(S_\alpha)$ intersects $Y$ transversely (see Proposition \ref{Prop:3.16}). Thus the component of $u^{-1}(N_{Y,2\epsilon})$ containing $z$ is inside a ball $B_z(2\epsilon)\subset\Sigma_T$ of radius $2\epsilon$ (with a suitable metric). Therefore
  \begin{align*}
    \dist(f,E_\epsilon)\leq &\dist(f,(1-\chi_\epsilon)f) \\
    =&\|f-(1-\chi_\epsilon)f\|_{L^p} \\
    =&\|\chi_\epsilon f\|_{L^p} \rightarrow 0\quad \text{ as } \epsilon\rightarrow 0.
  \end{align*}
  This is true for all $f\in\mathcal{S}_E$.

  If there is $z\in u^{-1}(Y)$ that belongs to a ghost sphere $S_\alpha$, let $U_z\subset \Sigma_T$ be the connected component of $u^{-1}(N_{Y,2\epsilon})$ containing $z$, then $S_\alpha\subset U_z$ and $(1-\chi_\epsilon)f$ vanishes on $S_\alpha$. Moreover if $S_\beta$ is a non-ghost component connected to $S_\alpha$ at a nodal point $z_{\alpha\beta}$, then $(1-\chi_\epsilon)f$ vanishes on $U_z\cap S_\beta$, which is a neighborhood of $z_{\alpha\beta}$ inside a ball of radius $2\epsilon$.

  Even though elements of $E_\epsilon$ vanish on ghost components, we still have
  \[\Img D_u\bar{\partial}_{J_0}\oplus E_\epsilon=L^p(\Omega^{0,1}(\Sigma_T,u^{*}TM))\]
  because $D_u\bar{\partial}_{J_0}$ is already transverse at constant maps.
\end{proof}

By the elliptic regularity (see \cite[App.~B]{MS}) of the dual operator of $D_u\bar{\partial}_{J_0}$, one may assume that $E_\epsilon$ consists of smooth sections, and therefore
\[E_\epsilon\subset W^{r-1,p}(\Omega^{0,1}(\Sigma_T,u^{*}TM))\]
for all $p>1$ and positive integer $r$ with $r-2/p>1$.
\begin{proposition}\label{Fix:M2:46}
  For a $k$-labeled tree $T$, let $\Sigma_T$ be the genus zero curve modeled on $T$, and $\dbar_{J_0}:W^{r,p}(\Sigma_T,M)\rightarrow M$ be the Cauchy-Riemann operator, where $r$ is a positive integer, $p>1$ and $r-2/p>1$. Let $D_u\dbar_{J_0}$ be the linearization of $\dbar_{J_0}$ at $u\in W^{r,p}(\Sigma_T,M)$.

  Then for $\epsilon>0$ sufficiently small, the vector space $E_\epsilon$ defined in Lemma \ref{lemma3.7} satisfies
  \[\Img D_u\bar{\partial}_{J_0}\oplus E_\epsilon=W^{r-1,p}(\Omega^{0,1}(\Sigma_T,u^{*}TM))\]
  for any integer $r> 0$.
\end{proposition}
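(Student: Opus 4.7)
The plan is to bootstrap Lemma \ref{lemma3.7} from the $W^{0,p}$ setting to the $W^{r-1,p}$ setting using elliptic regularity for the Cauchy-Riemann operator $D_u\bar{\partial}_{J_0}$, together with the remark preceding the proposition that, after possibly shrinking $E$, we may assume $E_\epsilon$ consists of smooth sections.

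First I would fix $\eta\in W^{r-1,p}(\Omega^{0,1}(\Sigma_T,u^{*}TM))\subset W^{0,p}(\Omega^{0,1}(\Sigma_T,u^{*}TM))$. Applying Lemma \ref{lemma3.7} in the low-regularity setting, I can write
\[
\eta=D_u\bar{\partial}_{J_0}\xi+e,\qquad \xi\in W^{1,p}(\Sigma_T,u^{*}TM),\ e\in E_\epsilon.
\]
Since $e$ is smooth and $\eta\in W^{r-1,p}$, the difference $D_u\bar{\partial}_{J_0}\xi=\eta-e$ lies in $W^{r-1,p}$. Then, since $D_u\bar{\partial}_{J_0}$ is a Cauchy-Riemann type elliptic operator, standard interior/global elliptic regularity (see \cite[App.~B]{MS}) produces an iterative gain of one derivative: $\xi\in W^{1,p}$ with $D_u\bar{\partial}_{J_0}\xi\in W^{r-1,p}$ forces $\xi\in W^{r,p}$. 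This gives the sum decomposition in the desired regularity.

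For directness, suppose $D_u\bar{\partial}_{J_0}\xi=e\in E_\epsilon$ with $\xi\in W^{r,p}$. Viewing the identity inside $W^{0,p}$, the direct sum established in Lemma \ref{lemma3.7} forces $e=0$ and $D_u\bar{\partial}_{J_0}\xi=0$. Combined with the previous step this yields
\[
\Img\bigl(D_u\bar{\partial}_{J_0}\colon W^{r,p}\to W^{r-1,p}\bigr)\oplus E_\epsilon=W^{r-1,p}(\Omega^{0,1}(\Sigma_T,u^{*}TM)),
\]
which is the claim.

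The only genuine technical point is verifying that elliptic bootstrapping applies in this nodal-domain setting: $\Sigma_T$ is a tree of spheres, but $D_u\bar{\partial}_{J_0}$ acts componentwise with matching conditions at the nodes, and on each smooth component one has the standard regularity theory on $S^2$. Compatibility at the nodes is preserved because $\eta-e$ continues to satisfy the matching constraints inherited from $\eta$ and $e$. Beyond this verification, the proof is a routine upgrade, and no new analytic input beyond Lemma \ref{lemma3.7} and standard elliptic regularity is required.
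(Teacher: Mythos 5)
Your argument is essentially correct, but it takes a different route from the paper's own proof, and there is one small point you gloss over.

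The paper proceeds by induction on $r$: it first shows, assuming $\Img D_u\bar\partial_{J_0}\cap E_\epsilon=\{0\}$ at level $r_0$, that the natural embedding $\hat\iota\colon W^{r_0,p}\hookrightarrow W^{r_0-1,p}$ carries $\Img_{r_0+1}D_u\bar\partial_{J_0}$ into $\Img_{r_0}D_u\bar\partial_{J_0}$ and hence $\Img_{r_0+1}D_u\bar\partial_{J_0}\cap E_\epsilon=\{0\}$; it then invokes the fact that $\dim\coker D_u\bar\partial_{J_0}$ is independent of $r$ (which is precisely elliptic regularity applied to the dual operator) together with $\dim E_\epsilon=\dim\coker$ to conclude the splitting at level $r_0+1$. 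Your proof instead performs a direct bootstrap: decompose $\eta$ at the $W^{0,p}$ level, observe that $\eta-e\in W^{r-1,p}$ because elements of $E_\epsilon$ are smooth, and promote $\xi$ to $W^{r,p}$ by elliptic regularity. The two arguments use the same underlying analytic input — elliptic regularity for the Cauchy--Riemann operator — but yours uses it constructively on the section $\xi$, whereas the paper uses it implicitly via the $r$-independence of the cokernel dimension and a formal counting argument. Your route is more direct and perhaps more transparent; the paper's induction has the virtue of separating cleanly the injectivity of $E_\epsilon\to\coker$ from its surjectivity.

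The one point you should make explicit: your directness step (``the direct sum established in Lemma~\ref{lemma3.7} forces $e=0$'') assumes that the $\oplus$ in Lemma~\ref{lemma3.7} is a genuine internal direct sum, i.e.\ that $\Img D_u\bar\partial_{J_0}\cap E_\epsilon=\{0\}$ at level $r=1$. The chain Lemma~\ref{lemma3.6}--Corollary~\ref{corollary3.3}--Lemma~\ref{lemma3.7} only guarantees spanning, not trivial intersection, and the paper addresses this explicitly in the final paragraph of its proof by passing to a subspace $\hat E_\epsilon\subset E_\epsilon$ with trivial intersection and running the argument with $\hat E_\epsilon$ in place of $E_\epsilon$. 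Your proof should add the same fallback to cover the case $\Img D_u\bar\partial_{J_0}\cap E_\epsilon\neq\{0\}$; once that is in place, the surjectivity part of your argument (the bootstrap giving $\eta=D_u\bar\partial_{J_0}\xi+e$ with $\xi\in W^{r,p}$) is unaffected and the proof is complete.
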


\begin{proof}
  Firstly, assume that $\Img D_u\bar{\partial}_{J_0}\cap E_\epsilon=\{0\}$. We prove this by induction with respect to $r$. The case of $r=1$ is proven by Lemma \ref{lemma3.7}.

  Assume it holds for all $r\leq r_0$. Denote by
  \[\Img_{r_0+1}D_u\bar{\partial}_{J_0}\subset W^{r_0,p}(\Omega^{0,1}(\Sigma_T,u^{*}TM))\]
  the image of $D_u\bar{\partial}_{J_0}:W^{r_0+1,p}(\Sigma_T,M)\rightarrow M$, then by the following commutative diagram
\[\begin{tikzcd}
	W^{r_0+1,p}(\Sigma_T,M) && W^{r_0,p}(\Omega^{0,1}(\Sigma_T,u^{*}TM)) \\
	\\
	W^{r_0,p}(\Sigma_T,M) && W^{r_0-1,p}(\Omega^{0,1}(\Sigma_T,u^{*}TM))
	\arrow["\bar{\partial}_{J_0}", from=1-1, to=1-3]
	\arrow["\iota"', from=1-1, to=3-1]
	\arrow["\bar{\partial}_{J_0}"', from=3-1, to=3-3]
	\arrow["\hat{\iota}", from=1-3, to=3-3]
  \end{tikzcd}\]
where $\iota$, $\hat{\iota}$ are the natural embeddings, we have
\[\hat{\iota}(\Img_{r_0+1}D_u\bar{\partial}_{J_0})\subset \Img_{r_0}D_u\bar{\partial}_{J_0},\]
and therefore by the assumption,
\[\hat{\iota}(\Img_{r_0+1}D_u\bar{\partial}_{J_0})\cap E_\epsilon\subset \Img_{r_0}D_u\bar{\partial}_{J_0}\cap E_\epsilon=\{0\}.\]
Since $\hat{\iota}$ is an embedding, we conclude that $\Img_{r_0+1}D_u\bar{\partial}_{J_0}\cap E_\epsilon=\{0\}$.

Now since the dimension of the cokernel of $D_u\bar{\partial}_{J_0}$ does not depend on $r$, and $E_\epsilon$ is exactly of that dimension, we have
\[\Img D_u\bar{\partial}_{J_0}\oplus E_\epsilon=W^{r_0,p}(\Omega^{0,1}(\Sigma_T,u^{*}TM)).\]
This completes the induction.

If $\Img D_u\bar{\partial}_{J_0}\cap E_\epsilon\neq\{0\}$, we take a subspace $\hat{E}_\epsilon\subset E_\epsilon$ satisfies
\begin{align*}
  \Img D_u\bar{\partial}_{J_0}\oplus \hat{E}_\epsilon=&W^{0,p}(\Omega^{0,1}(\Sigma_T,u^{*}TM))\\
  \Img D_u\bar{\partial}_{J_0}\cap \hat{E}_\epsilon=&\{0\}.
\end{align*}
And repeat the argument above replacing $E_\epsilon$ with $\hat{E}_\epsilon$.
\end{proof}

Back to the construction of the Kuranishi structure. Let 
\[\{(V_u,E_{J_0,u},\Gamma_u,\psi_u,s_u)\}\]
 be the Kuranishi structure on $\overline{\mathcal{M}}_{0,k}(J_0,A)$ as in Section \ref{Sec:4}. Let us modify the obstruction data $E_{J,u}$ as in the proof of Lemma \ref{lemma3.7}, and denote the result by $E_{J_0,u,\epsilon_u}$
\[E_{J_0,u,\epsilon_u}\coloneqq \{(1-\chi_{\epsilon_u})f:f\in E_{J_0,u}\}.\]

\begin{lemma}\label{lemma:5.17}
For $\{E_{J,u,\epsilon_u}\}$ defined above, there exists a universal $\epsilon>0$ for all $u$ to replace $\epsilon_u$.
\end{lemma}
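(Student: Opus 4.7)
The plan is to combine the pointwise existence of $\epsilon_u$ from the previous Proposition with the openness of the transversality decomposition and the compactness of $\overline{\mathcal{M}}_{0,k}(J_0,A)$. The key point is that, for fixed $\epsilon>0$, the property
\[\Img D_u\bar{\partial}_{J_0}\oplus E_{J_0,u,\epsilon}=W^{r-1,p}(\Omega^{0,1}(\Sigma_u,u^{*}TM))\]
is an open condition in $u$ by Lemma~\ref{lemma3.6} (the Riesz-type quantitative criterion), so pointwise positivity of $\epsilon_u$ over a compact base should upgrade to a uniform positive lower bound via a finite subcover.

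First I would cover $\overline{\mathcal{M}}_{0,k}(J_0,A)$ by finitely many Kuranishi neighborhoods $\{V_{u_i}\}_{i=1}^N$. For $v\in V_{u_i}$, the obstruction space $E_{J_0,v}$ is $\mathrm{Par}^{hol}_{u_i,v}E_{J_0,u_i}$, using the identification of the compact support $K_{ob}\subset\Sigma_{u_i}$ with $K_{ob}'\subset\Sigma_v$ supplied by the stabilization data in Section~\ref{Sec:4}. Since the cutoff $\chi_\epsilon$ is supported on $v^{-1}(N_{Y,2\epsilon})$ and $\epsilon$ parametrizes a tubular neighborhood in the target $M$ rather than in the varying domain, the modified space $E_{J_0,v,\epsilon}$ is unambiguously defined on each patch. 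By the previous Proposition pick $\epsilon_i>0$ that works at $v=u_i$. By continuity of $v\mapsto D_v\bar{\partial}_{J_0}$ and $v\mapsto E_{J_0,v,\epsilon_i}$, Lemma~\ref{lemma3.6} yields a neighborhood of $u_i$ in $V_{u_i}$ on which $E_{J_0,\cdot,\epsilon_i}$ still complements the image. A further finite subcover of $V_{u_i}\cap\overline{\mathcal{M}}_{0,k}(J_0,A)$ followed by shrinking $\epsilon_i$ to some $\tilde{\epsilon}_i>0$ produces a uniform value on this patch, and I set $\epsilon\coloneqq\min_i\tilde{\epsilon}_i>0$.

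The delicate step will be the continuous dependence of $E_{J_0,v,\epsilon}$ on $v$ across strata: the underlying curve $\Sigma_v$ can undergo nodal degenerations as $v$ moves within $V_{u_i}$, and the configuration of $v^{-1}(Y)$ varies accordingly. The stabilization data controls the non-ghost components; for $v$ close to a stratum with ghost components, I need to verify, as in the second paragraph of the proof of Lemma~\ref{lemma3.7}, that the vanishing of $(1-\chi_\epsilon)f$ on shrinking neighborhoods of nodes does not violate the quantitative Riesz estimate of Lemma~\ref{lemma3.6}. Once this is controlled uniformly on each patch $V_{u_i}$, the finite subcover argument closes the proof.
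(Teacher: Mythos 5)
Your proposal is correct and follows essentially the same compactness argument as the paper: a local positive lower bound on $\epsilon$ around each point, together with a finite subcover of the compact moduli space $\overline{\mathcal{M}}_{0,k}(J_0,A)$, gives a uniform $\epsilon>0$. In fact you are more explicit than the paper's proof, which simply asserts the local lower bound without invoking the quantitative openness from Lemma~\ref{lemma3.6} or discussing the cross-strata identification of obstruction spaces; your added details are a reasonable justification of that step, though a full argument would need to extend Lemma~\ref{lemma3.6} slightly to allow the Fredholm operator $D_v\bar{\partial}_{J_0}$ to vary continuously alongside the subspace.
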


\begin{proof}
  This is a standard compact argument. For each $u\in\overline{\mathcal{M}}_{0,k}(J_0,A)$, there is a neighborhood $\mathfrak{U}_u\subset \overline{\mathcal{M}}_{0,k}(J_0,A)$ of $u$, such that $\epsilon_v$ has a positive lower bound $\epsilon_{\mathfrak{U}_u}$ for $v\in \mathfrak{U}_u$. Since there exists a finite subcover $\{\mathfrak{U}_{u_i}\}_{1\leq i\leq \mathfrak{m}}\subset \{\mathfrak{U}_u\}$, take the smallest $\epsilon_{\mathfrak{U}_{u_i}}$ as our $\epsilon$.
\end{proof}

Now we take $\{E_{J_0,u,\epsilon}\}$\label{sym:Ejwye} as our {\bf new obstruction data} for $\overline{\mathcal{M}}_{0,k}(J_0,A)$.

\begin{remark}
 Denote by $N_{Y,\epsilon}\subset M$ be the $\epsilon$-tubular neighborhood of $Y$. Then $E_{J_0,u,\epsilon}$ is supported on $\Sigma_T\setminus u^{-1}(N_{Y,\epsilon})$. Let $z_i$ for $i=k+1,\ldots,k+\ell$ be the $i$-th marked point on $\Sigma_T$. The preimage $u^{-1}(N_{Y,\epsilon})$
 is clearly $\Gamma_u$-invariant, and therefore $\Gamma_u$ action on $E_{J_0,u,\epsilon}$ is well-defined.
\end{remark}

Repeat the construction in Section \ref{Sec:4}, we conclude the following.
\begin{proposition}\label{proposition3.28}\label{Fix:M2:47}
  For sufficiently small $\epsilon>0$, an oriented Kuranishi structure 
  \[\{(V_u,E_{J_0,u,\epsilon},\Gamma_u,\psi_u,s_u)\}\]
  is defined on $\overline{\mathcal{M}}_{0,k}(J_0,A)$ with respect to the obstruction data $E_{J_0,u,\epsilon}$, where
  \[E_{J_0,u,\epsilon}\coloneqq \{(1-\chi_\epsilon)f:f\in E_{J_0,u}\},\]
  $\chi_\epsilon:u^{-1}(B_y(2\epsilon))\rightarrow [0,1]$ is a cut-off function which is supported in the preimage of the ball $B_y(2\epsilon)$ centered at $y\in u\cap Y$ with radius $2\epsilon$, and $\chi_\epsilon\equiv 1$ on $u^{-1}(B_y(\epsilon))$. Thus elements of $E_{J_0,u,\epsilon}$ vanish in $u^{-1}(N_{Y,\epsilon})$.
\end{proposition}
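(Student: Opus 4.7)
The plan is to run the Fukaya--Ono construction from Section \ref{Sec:4} verbatim, with the modified spaces $E_{J_0,u,\epsilon}$ in place of the $E_{J_0,u}$, and to verify that the modification preserves every property needed for a Kuranishi structure. First I would invoke Lemma \ref{lemma:5.17} to fix a single $\epsilon>0$ that works uniformly over $\overline{\mathcal{M}}_{0,k}(J_0,A)$. Proposition \ref{Fix:M2:46} then guarantees
\[\Img D_u\bar{\partial}_{J_0}\oplus E_{J_0,u,\epsilon}=W^{r-1,p}(\Omega^{0,1}(\Sigma_u,u^{*}TM))\]
for every $u$, and since $f\mapsto (1-\chi_\epsilon)f$ is a dimension-preserving bijection $E_{J_0,u}\to E_{J_0,u,\epsilon}$, the rank of the new obstruction bundle equals that of the original. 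Elliptic regularity of the dual operator of $D_u\bar{\partial}_{J_0}$ (as already used after Lemma \ref{lemma3.7}) yields smoothness, and because the supports of elements of $E_{J_0,u,\epsilon}$ lie inside those of elements of $E_{J_0,u}$, they are still supported away from the nodes of $\Sigma_u$.

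With these properties in hand, the local model is built exactly as in Section \ref{Sec:4}: parametrize nearby maps by $V_{u,map}\times V_{u,resolve}\times V_{u,deform}$, transport $E_{J_0,u,\epsilon}$ to nearby $v$ via $\mathrm{Par}^{hol}_{u,v}$ applied on the compact support (identified with a region of $\Sigma_{T'}$ via the stabilization data), and define $s_u$ as the projection of $\bar{\partial}_{J_0}$ onto the resulting bundle $\mathcal{E}_{J_0,u,\epsilon}\to V_u$. The $\Gamma_u$-equivariance of $s_u$ follows from the remark preceding the proposition, which observes that $u^{-1}(N_{Y,\epsilon})$, and hence $\chi_\epsilon$, is $\Gamma_u$-invariant; consequently $E_{J_0,u,\epsilon}$ carries a well-defined $\Gamma_u$-representation and $s_u$ remains equivariant as in Section \ref{Sec:4}.

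For the coordinate changes, I would copy the definition from \cite{FK}: $\phi_{uu'}$ is the smooth embedding obtained from gluing/deformation, and $\hat{\phi}_{uu'}$ is the inclusion $\mathcal{E}_{J_0,u',\epsilon}\hookrightarrow \mathcal{E}_{J_0,u,\epsilon}$ induced by parallel transport. The dimension condition $\dim V_u-\mathrm{rank}\,E_{J_0,u,\epsilon}$ is independent of $u$ because ranks are preserved under the cut-off. Orientability is inherited from the Fukaya--Ono construction, since the proof of Theorem \ref{Thm:5.12} already reduces the computation of
\[N_{V_u}V_{u'}\cong \ker(\pi_{J_0,u,\epsilon}\circ D_u\bar{\partial}_{J_0})/\ker(\pi_{J_0,u',\epsilon}\circ D_{u'}\bar{\partial}_{J_0})\]
to the index of $D_u\bar{\partial}_{J_0}$, which does not depend on the particular complement chosen.

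The main technical point I expect to handle carefully is the smooth dependence of the cut-off on the parameter $v\in V_u$, especially across strata where intersections $v^{-1}(Y)$ split, collide, or approach a node. Since $Y$ is $J_0$-holomorphic and each $v\in V_u$ is $W^{r,p}$-close to $u$, the preimages $v^{-1}(N_{Y,\epsilon})$ vary continuously in $v$; a local partition-of-unity argument in the spirit of Lemma \ref{lemma:5.17}, together with the positivity of intersections from Proposition \ref{Prop:3.16}, produces a smooth family of cut-offs $\chi_{\epsilon,v}$ which is uniformly bounded away from the nodes. With this, $\mathcal{E}_{J_0,u,\epsilon}$ is a genuine smooth vector bundle and the assembly of local data into an oriented Kuranishi structure proceeds exactly as in Section \ref{Sec:4}.
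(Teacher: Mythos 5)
Your approach matches the paper's, which offers no explicit proof beyond the preceding sentence ``Repeat the construction in Section \ref{Sec:4}, we conclude the following'': you correctly assemble Lemma \ref{lemma:5.17} (uniform $\epsilon$), the complementarity result (Lemma \ref{lemma3.7} and its extension to all $r$), elliptic regularity for smoothness, rank preservation under $f\mapsto(1-\chi_\epsilon)f$, $\Gamma_u$-equivariance from the invariance of $u^{-1}(N_{Y,\epsilon})$, and the FK coordinate-change and orientation machinery. One caveat on your final paragraph: in the Fukaya--Ono construction the obstruction bundle $\mathcal{E}_{J_0,u,\epsilon}\to V_u$ has fiber $\Par^{hol}_{u,v}E_{J_0,u,\epsilon}$ at $v$, obtained by applying the cut-off \emph{once, at the center $u$}, and then transporting the resulting finite-dimensional space along the identified compact supports $K_{ob}\cong K_{ob}'$ given by the stabilization data; no $v$-dependent family of cut-offs $\chi_{\epsilon,v}$ is introduced, and smoothness of the bundle follows from smoothness of $\Par^{hol}$ and of the stabilization-data identification, exactly as for the unmodified $E_{J_0,u}$ in Section \ref{Sec:4}. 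The genuinely delicate point --- that the transported space still vanishes near $v^{-1}(Y)$ for $v\in V_u$, which is what Propositions \ref{prop:5.23} and \ref{Prop:5.23} actually use --- is handled not by re-cutting at $v$ but by taking $V_u$ small enough that $v^{-1}(N_{Y,\epsilon})$ stays inside the identified region where $\chi_\epsilon\equiv 1$.
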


\subsection{``Pullback'' Kuranishi structure on $\overline{\mathcal{M}}_{0,k+\ell}(J_0,A,Y)$}\label{Sec:PullBack}
Fix a constant $\theta_1$ with $\theta_2<\theta_1<\theta_0$ as in Proposition \ref{Prop:3.16}, and choose a {\bf domain independent} element $J_0\in B^{*}\subset \mathcal{J}_{\ell+1}^{*}(M,Y;J,\theta_1)$, where $B^{*}$ is an $\theta_2$-contractible open subset of $\mathcal{J}^{*}(M,Y;J,\theta_1,E_\ell)$ (see Proposition \ref{Prop:3.16} and Definition \ref{Def:3.20}).

Throughout this section, we denote by $[(u,{\bf z})]$ the elements of $\overline{\mathcal{M}}_{0,k+\ell}(J_0,A,Y)$, and by $[(w,{\bf z})]$ the elements of $\overline{\mathcal{M}}_{0,k}(J_0,A)$ to distinguish them. For convenience, we also use $u$ and $w$ as abbreviations for $[(u,{\bf z})]$ and $[(w,{\bf z})]$ respectively.

\subsubsection{Definition and properties of the forgetful maps}
We aim to compare the virtual fundamental classes represented by ${\bf ev}_k\coloneqq \prod_{i=1}^k\ev_i$ with the Kuranishi structures
\begin{equation}\label{eq:5.19}
\{(V_u,E_{J_0,u}\times\mathbb{C}^\ell,\Gamma_u,\psi_u,s_u\times \ev_{\mathbb{C},u})\}_{u\in\overline{\mathcal{M}}_{0,k+\ell}(J_0,A,Y)}
\end{equation}
defined in Theorem \ref{Thm:5.12}, and
\begin{equation}\label{eq:5.20}
  \{(V_w,E_{J_0,w},\Gamma_w,\psi_w,s_w)\}_{w\in \overline{\mathcal{M}}_{0,k}(J_0,A)}
\end{equation}
defined in Section \ref{Sec:4}. 

Firstly, the virtual fundamental classes do not depend on the choices made to define the Kuranishi structure. Therefore on $\overline{\mathcal{M}}_{0,k}(J_0,A)$, instead of (\ref{eq:5.20}), we shall use the Kuranishi structure 
$\{(V_w,E_{J_0,w,\epsilon},\Gamma_w,\psi_w,s_w)\}$ defined in Proposition \ref{proposition3.28}. Next, the plan is to find a ``pullback'' Kuranishi structure on the moduli space $\overline{\mathcal{M}}_{0,k+\ell}(J_0,A,Y)$ to replace (\ref{eq:5.19}). The "pullback" is defined by the forgetful maps $\varphi_\ell$ and $\underline{\varphi}_\ell$ described below.

\begin{definition}\label{Def:5.20}\label{sym:forgetful}
  Define
  \[\varphi_\ell:\overline{\mathcal{M}}_{0,k+\ell}(J_0,A,Y)\rightarrow \overline{\mathcal{M}}_{0,k}(J_0,A)\]
  to be the {\bf forgetful map of maps}. For $u\in \overline{\mathcal{M}}_{0,k+\ell}(J_0,A,Y)$, $\varphi_\ell$ forgets the last $\ell$ marked points of $u$ and subsequently stabilizes its ghost components, while $u$ remains unchanged on all remaining components.

  Define $\underline{\varphi}_\ell$ to be the {\bf forgetful map of curves}. Let $\Sigma_u$ be the underlying curve of $u\in \overline{\mathcal{M}}_{0,k+\ell}(J_0,A,Y)$. The map $\underline{\varphi}_\ell$ forgets the last $\ell$ marked points of $\Sigma_u$ and subsequently stabilizes its ghost components, resulting in a genus zero curve with $k$ marked points.
\end{definition}

\begin{definition}\label{def:5.21}
  Let $u\in\overline{\mathcal{M}}_{0,k+\ell}(J_0,A,Y)$ and $w= \varphi_\ell(u)$. Then define $\widetilde{E}_{J_0,u,\epsilon}$ as
  \[\widetilde{E}_{J_0,u,\epsilon}\coloneqq \{\underline{\varphi}_\ell^{*}(\kappa)|\kappa\in E_{J_0,w,\epsilon}\},\]
  where $\underline{\varphi}_\ell^{*}$ is the pullback map corresponding to $\underline{\varphi}_\ell$.
\end{definition}

\begin{remark}\label{RMK:5.18}
  Let $\Sigma_u$ be the underlying curve of $u\in \overline{\mathcal{M}}_{0,k+\ell}(J_0,A,Y)$. Suppose that $\Sigma_u$ is modeled on the $\ell$ stable $k+\ell$-labeled tree $T$. Denote by $\{S_\alpha\}_{\alpha\in T}$ the sphere components of $\Sigma_u$.
  \begin{enumerate}
    \item If $S_\alpha$ is not contracted by $\underline{\varphi}_\ell$, then when restricted to $S_\alpha$,
          \[\widetilde{E}_{J_0,u,\epsilon}|_{S_\alpha}\cong E_{J_0,w,\epsilon}|_{\underline{\varphi}_\ell(S_\alpha)}\]
          via the map $\underline{\varphi}_\ell^{*}$.
    \item If $S_\alpha$ is a ghost sphere containing some of the last $\ell$ marked points, then
          \[\widetilde{E}_{J_0,u,\epsilon}|_{S_\alpha}=\{0\}.\]
  \end{enumerate}
  Since elements of $\widetilde{E}_{J_0,u,\epsilon}$ vanish near nodal points of $\Sigma_u$, $\widetilde{E}_{J_0,u,\epsilon}$ consists of sections that are smooth on each component $S_\alpha\subset\Sigma_u$ and continuous on the entire $\Sigma_u$.
\end{remark}

\begin{proposition}\label{prop:5.23}\label{Fix:M2:48}
  $\widetilde{E}_{J_0,u,\epsilon}\times \mathbb{C}^\ell$ as an obstruction datum defines an oriented Kuranishi structure
  \[\{(\widetilde{V}_u,\widetilde{E}_{J_0,u,\epsilon}\times \mathbb{C}^\ell,\Gamma_u,\widetilde{\psi}_u,\widetilde{s}_u\times \ev_{\mathbb{C},u})\}\]
  on $\overline{\mathcal{M}}_{0,k+\ell}(J_0,A,Y)$.
\end{proposition}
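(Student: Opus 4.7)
The plan is to verify the Kuranishi data listed in the statement—$\widetilde{V}_u$, $\widetilde{E}_{J_0,u,\epsilon}\times\mathbb{C}^\ell$, $\Gamma_u$, $\widetilde{\psi}_u$, $\widetilde{s}_u\times\ev_{\mathbb{C},u}$, together with compatible coordinate changes—at every $u\in\overline{\mathcal{M}}_{0,k+\ell}(J_0,A,Y)$, leveraging two already established inputs: the Kuranishi structure $\{(V_w,E_{J_0,w,\epsilon},\Gamma_w,\psi_w,s_w)\}$ on $\overline{\mathcal{M}}_{0,k}(J_0,A)$ supplied by Proposition \ref{proposition3.28}, and the $\mathbb{C}^\ell$-factor treatment from Theorem \ref{Thm:5.12}. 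The overall philosophy is to pull back the $\bar{\partial}$-piece through $\underline{\varphi}_\ell$ and adjoin the $\mathbb{C}^\ell$-piece exactly as before.

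First I would establish complementarity of $\widetilde{E}_{J_0,u,\epsilon}$. Writing $w=\varphi_\ell(u)$ and examining $\underline{\varphi}_\ell:\Sigma_u\to\Sigma_w$, the components of $\Sigma_u$ collapsed by $\underline{\varphi}_\ell$ are exactly the ghost bubbles carrying only forgotten marked points and nodes; on each such bubble $u$ is constant and $D_u\bar{\partial}_{J_0}$ is automatically surjective. On every non-collapsed component $\underline{\varphi}_\ell$ is biholomorphic onto its image, $D_u\bar{\partial}_{J_0}$ is identified with $D_w\bar{\partial}_{J_0}$, and by Remark \ref{RMK:5.18} the restriction of $\widetilde{E}_{J_0,u,\epsilon}$ there corresponds isomorphically to $E_{J_0,w,\epsilon}$. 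Combining these two pieces together with the complementarity from Proposition \ref{proposition3.28} yields
\[\Img D_u\bar{\partial}_{J_0}\oplus \widetilde{E}_{J_0,u,\epsilon}=W^{r-1,p}(\Omega^{0,1}(\Sigma_u,u^{*}TM)).\]

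Next I would define $\widetilde{V}_u\coloneqq \mu(V_{u,map}\times V_{u,resolve}\times V_{u,deform})$ exactly as in Section \ref{Sec:4}, but built against $\widetilde{E}_{J_0,u,\epsilon}$ and $\bar{\partial}_{J_0}$, with $\ev_{\mathbb{C},u}$ from Definition \ref{Def:5.1} encoding the $Y$-constraint on the last $\ell$ marked points. The analogue of Remark \ref{RMK:5.2} then gives $(\widetilde{s}_u\times\ev_{\mathbb{C},u})^{-1}(0)=\widetilde{V}_u\cap\overline{\mathcal{M}}_{0,k+\ell}(J_0,A,Y)$, which provides $\widetilde{\psi}_u$. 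Coordinate changes split into two blocks: the $\bar{\partial}$-block is obtained by pulling back, through $\underline{\varphi}_\ell$, the coordinate change between $(V_{w'},E_{J_0,w',\epsilon})$ and $(V_w,E_{J_0,w,\epsilon})$ of Proposition \ref{proposition3.28}; the $\mathbb{C}^\ell$-block is the normal-bundle identification of Theorem \ref{Thm:5.12}. Dimension matching, $\Gamma_u$-equivariance, and the orientation all transfer from the target structure: the quotient $\widetilde{\mathcal{E}}_{J_0,u,\epsilon}\times\mathbb{C}^\ell/\widetilde{\mathcal{E}}_{J_0,u',\epsilon}\times\mathbb{C}^\ell$ reduces to $\widetilde{\mathcal{E}}_{J_0,u,\epsilon}/\widetilde{\mathcal{E}}_{J_0,u',\epsilon}$, which is the pull-back of $\mathcal{E}_{J_0,w,\epsilon}/\mathcal{E}_{J_0,w',\epsilon}$.

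The hard part will be showing that $\widetilde{E}_{J_0,u,\epsilon}$ assembles into a smooth bundle over $\widetilde{V}_u$ across strata, since the combinatorial type of $\underline{\varphi}_\ell$ jumps as ghost bubbles appear or disappear in a family. This is precisely where the modification of Section \ref{section:5.1} pays off: by construction $E_{J_0,w,\epsilon}$ vanishes on a uniform $\epsilon$-neighborhood of $w^{-1}(Y)\subset\Sigma_w$, and since the forgotten marked points of $u$ lie on $Y$, the components of $\Sigma_u$ collapsed by $\underline{\varphi}_\ell$ sit inside (the pre-image of) this vanishing locus. Consequently $\widetilde{E}_{J_0,u,\epsilon}$ is identically zero on an open neighborhood of every collapsed component of $\Sigma_u$ and on neighborhoods of the nodes where those bubbles attach, so the pull-back extends smoothly across the degeneration loci in $V_{u,resolve}$, and the construction of $\widetilde{\mathcal{E}}_{J_0,u,\epsilon}$ as a smooth bundle on $\widetilde{V}_u$ reduces to the one already produced on $V_w$.
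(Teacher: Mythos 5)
Your proposal is correct and follows the same approach as the paper: both obtain complementarity of $\widetilde{E}_{J_0,u,\epsilon}$ from surjectivity of $D_u\bar\partial_{J_0}$ on the collapsed ghost spheres (the paper cites the corresponding transversality lemma in McDuff--Salamon) and then transplant the remaining Kuranishi data from Theorem \ref{Thm:5.12} via the pullback through $\underline{\varphi}_\ell$. Your closing paragraph, explaining that the pulled-back obstruction data assembles smoothly across strata precisely because it vanishes identically in a neighborhood of every collapsed component and attaching node, usefully makes explicit a point the paper defers to the phrase ``the rest of the proof is the same as in Theorem \ref{Thm:5.12}.''
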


\begin{proof}
  Notice that $\bar{\partial}_J$ is transverse to zero on ghost spheres (see Lemma 6.7.6 in \cite{MS}), therefore
  \[\widetilde{E}_{J_0,u,\epsilon}\oplus \Img D_u\bar{\partial}_J=W^{r-1,p}(\Omega^{0,1}(\Sigma_T,u^{*}TM)).\]
  Take $\widetilde{E}_{J_0,u,\epsilon}\times\mathbb{C}^\ell$ as the obstruction data, and define $\widetilde{V}_u$ by $\widetilde{V}_{u,map}$, $\widetilde{V}_{u,resolve}$ and $\widetilde{V}_{u,deform}$ accordingly as in Section \ref{Sec:4}. The automorphism group $\Gamma_u=\{0\}$, the Kuranishi map
  \[\widetilde{s}_u(v)=\dbar_{J_0}(v)\times\ev_{\mathbb{C},u}(v),\ v\in \widetilde{V}_u,\]
  and the map
  \[\widetilde{\psi}_u:\widetilde{s}_u^{-1}(0)\rightarrow \widetilde{V}_u\cap \overline{\mathcal{M}}_{0,k+\ell}(J_0,A,Y)\]
  is the identity map. The rest of the proof is the same as in Theorem \ref{Thm:5.12}.
\end{proof}

\begin{proposition}\label{Prop:5.23}
  $\varphi_\ell$ extends to $\widetilde{V}_u$ for each $u\in \overline{\mathcal{M}}_{0,k+\ell}(J_0,A,Y)$, such that for $w\coloneqq \varphi_\ell(u)$ and $v\in \widetilde{V}_u$, we have $\varphi_\ell(v)\in V_w$.
\end{proposition}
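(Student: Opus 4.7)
The plan is to construct $\varphi_\ell$ on $\widetilde{V}_u$ by combining the forgetful map on domains (which is continuous on the Deligne–Mumford level) with the simple observation that the map $v$ is already constant on the ghost components that must be contracted. The verification that $\varphi_\ell(v) \in V_w$ then reduces to checking the defining obstruction equation for $V_w$, which is immediate from Definition \ref{def:5.21} where $\widetilde{E}_{J_0,u,\epsilon}$ was declared to be $\underline{\varphi}_\ell^{*} E_{J_0,w,\epsilon}$.

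First, by shrinking $\widetilde{V}_u = \mu(\widetilde{V}_{u,map}\times \widetilde{V}_{u,resolve}\times \widetilde{V}_{u,deform})$ if necessary, I use continuity of the forgetful map on the universal curve to arrange that $\underline{\varphi}_\ell$ carries the parametrized family of curves $\{\Sigma_{\mathfrak{a},\vartheta}\}_{(\mathfrak{a},\vartheta)\in \widetilde{V}_{u,resolve}\times \widetilde{V}_{u,deform}}$ into the family $\{\Sigma_{\mathfrak{a}',\vartheta'}\}_{(\mathfrak{a}',\vartheta')\in V_{w,resolve}\times V_{w,deform}}$ parametrizing the neighborhood of $\Sigma_w$. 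Concretely, for each $v=\mu(\mathbf{v},\mathfrak{a},\vartheta)\in \widetilde{V}_u$ I define $\varphi_\ell(v)$ as follows: its underlying curve is $\underline{\varphi}_\ell(\Sigma_{\mathfrak{a},\vartheta})$, obtained by forgetting the last $\ell$ marked points and iteratively contracting the resulting unstable (necessarily ghost) components of $\Sigma_{\mathfrak{a},\vartheta}$; on every surviving component I set $\varphi_\ell(v)$ equal to $v$, which is unambiguous because on each contracted ghost component $v$ is constant, so its value extends continuously across the node where the contraction happens.

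Next I verify that $\varphi_\ell(v)\in V_w$, i.e.\ that $\varphi_\ell(v)$ can be written as $\mu(\mathbf{v}',\mathfrak{a}',\vartheta')$ for some $\mathbf{v}'\in V_{w,map}$. By construction the domain parameters $(\mathfrak{a}',\vartheta')\in V_{w,resolve}\times V_{w,deform}$ are exactly those specified by $\underline{\varphi}_\ell$. For the map part, I use Remark \ref{RMK:5.18}: on each non-contracted component $S_\alpha$ one has
\[
\widetilde{E}_{J_0,v,\epsilon}|_{S_\alpha}\;\cong\; E_{J_0,\varphi_\ell(v),\epsilon}\big|_{\underline{\varphi}_\ell(S_\alpha)}
\]
via $\underline{\varphi}_\ell^{*}$, while on every contracted ghost component the restriction of $\widetilde{E}_{J_0,v,\epsilon}$ vanishes and $\dbar_{J_0}v\equiv 0$ automatically. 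Therefore the condition $\dbar_{J_0}v\equiv 0\bmod \widetilde{E}_{J_0,v,\epsilon}$ defining $\widetilde{V}_{u,map}$ pushes forward under $\underline{\varphi}_\ell$ to $\dbar_{J_0}\varphi_\ell(v)\equiv 0\bmod E_{J_0,\varphi_\ell(v),\epsilon}$, which is exactly the defining condition of $V_{w,map}$. Compatibility of the parallel transports $\Par^{hol}_{u,v}$ and $\Par^{hol}_{w,\varphi_\ell(v)}$ with $\underline{\varphi}_\ell^{*}$ follows from the fact that parallel transport is defined pointwise in $M$ and both obstruction spaces are constructed from the same sections on $M$ pulled back along the (compatible) maps $v$ and $\varphi_\ell(v)$.

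The main obstacle I expect is ensuring that $\varphi_\ell$ is well-defined and smooth across the boundary between strata, i.e.\ across loci in $\widetilde{V}_u$ where a ghost component of $\Sigma_u$ gets resolved or a marked point moves off/onto a node. This requires comparing the gluing parameters $\mathfrak{a}$ of the resolution on the $(k+\ell)$-pointed side with the induced parameters $\mathfrak{a}'$ on the $k$-pointed side; the compatibility is a consequence of the standard fact that the forgetful morphism $\overline{\mathcal{M}}_{0,k+\ell}\to\overline{\mathcal{M}}_{0,k}$ is a smooth map of orbifolds (with an obvious description in plumbing coordinates near the nodes being resolved, contracting the plumbing parameters associated to contracted components to zero). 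Once this compatibility is invoked, smoothness of the extension $\varphi_\ell:\widetilde{V}_u\to V_w$ follows, completing the proof.
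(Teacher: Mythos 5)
Your proposal is correct and follows essentially the same route as the paper: you observe that the transported obstruction data $\Par^{hol}_{uv}\widetilde{E}_{J_0,u,\epsilon}$ vanishes on exactly the ghost components that $\underline{\varphi}_\ell$ would contract, so that $v$ is $J_0$-holomorphic (hence constant) there and the domain-level forgetful map is well defined on $v$, and then you use the identification of Remark~\ref{RMK:5.18} together with the compatibility of the parallel transports with $\underline{\varphi}_\ell^{*}$ to verify that $\dbar_{J_0}\varphi_\ell(v)\equiv 0\bmod \Par^{hol}_{w\varphi_\ell(v)}E_{J_0,w,\epsilon}$, i.e.\ $\varphi_\ell(v)\in V_w$. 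The only organizational difference is that the paper first treats the case $\Sigma_v=\Sigma_u$ and then disposes of general $v$ by noting that the resolution and deformation occur only where $\widetilde{E}_{J_0,u,\epsilon}$ vanishes, whereas you handle all of $\widetilde{V}_u$ uniformly and instead invoke the description of the Deligne--Mumford forgetful morphism in plumbing coordinates to control the cross-strata behaviour; the substance is the same.
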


\begin{proof}
  Denote by $\Sigma_u$ the underlying curve of $u$. Take a $v\in \widetilde{V}_u$ with underlying curve $\Sigma_v$. First, assume that $\Sigma_v=\Sigma_u$. We have
  \[\bar{\partial}_{J_0}v\equiv 0\ \mathrm{mod}\ \Par^{hol}_{uv}\widetilde{E}_{J_0,u,\epsilon}.\]
  If $S_\alpha\subset \Sigma_u$ is a ghost component contracted by the forgetful map $\underline{\varphi}_\ell$, then $S_\alpha$ contains some of the last $\ell$ marked points and
  \[\dbar_{J_0}v|_{S_\alpha}=0,\]
  this is because elements of $\Par^{hol}_{uv}\widetilde{E}_{J_0,u,\epsilon}$ vanish near $v^{-1}(Y)$. This indicates that $v|_{S_\alpha}$ is also a constant map. Hence $\underline{\varphi}_\ell$ contracts $S_\alpha\subset \Sigma_v$ as well, thus $\underline{\varphi}_\ell(\Sigma_v)=\underline{\varphi}_\ell(\Sigma_u)$.

  Let
  \[\bar{\partial}_{J_0} v=\eta,\quad \eta\in \Par_{uv}^{hol}\widetilde{E}_{J_0,u,\epsilon}\]
  and
  \[\bar{\partial}_{J_0} (\varphi_{\ell}(v))=\eta^{'}.\]
  For each non-ghost component $S_\alpha$ of $v$,
  \begin{equation}\label{eq:5.22}
  v|_{S_\alpha}=\varphi_\ell(v)|_{\underline{\varphi}_\ell(S_\alpha)},
  \end{equation}
  where we identify $\underline{\varphi}_\ell(S_\alpha)$ with $S_\alpha$ since $\underline{\varphi}_\ell$ does not change anything on non-ghost components except forgetting the last $\ell$ marked points. Notice that the parallel transport $\Par^{hol}$ is determined by the Levi-Civita connection $\nabla$ on $M$, and therefore on $S_\alpha$,
  \begin{equation}\label{eq:5.23}
  \Par^{hol}_{uv}|_{S_\alpha}=\Par^{hol}_{w\varphi_\ell(v)}|_{\varphi_\ell(S_\alpha)}.
  \end{equation}
  Equations (\ref{eq:5.22}) and (\ref{eq:5.23}) imply that
  \[(\Par^{hol}_{uv}|_{S_\alpha})^{-1}(\eta|_{S_\alpha})=(\Par^{hol}_{w\varphi_\ell(v)}|_{\varphi_\ell(S_\alpha)})^{-1}(\eta^{'}|_{\varphi_\ell(S_\alpha)})\]
  with respect to the identification between $S_\alpha$ and $\underline{\varphi}_\ell(S_\alpha)$. Since
  \[(\Par^{hol}_{uv})^{-1}(\eta)\in \widetilde{E}_{J_0,u,\epsilon},\]
  we have
  \[(\Par^{hol}_{w\varphi_\ell(v)})^{-1}(\eta^{'})\in E_{J_0,w,\epsilon}\]
  by the definition of $\widetilde{E}_{J_0,u,\epsilon}$. Thus
  \[\dbar_{J_0}\varphi_\ell(v)\equiv 0\mod \Par^{hol}_{w\varphi_\ell(v)}E_{J_0,w,\epsilon}.\]
  We conclude that $\varphi_\ell(v)\in V_w$.

  For a general $v\in \widetilde{V}_u$, the underlying curve is given by resolving nodal points and deforming marked points of $\Sigma_u$. Then the underlying curve of $\varphi_\ell(v)$ is obtained by resolving nodal points and deforming marked points of the underlying curve of $w$. \label{Fix:M2:49}Notice that the resolution only occurs where elements of $\widetilde{E}_{J_0,u,\epsilon}$ vanish; therefore, for the area of the underlying curve where elements of $\widetilde{E}_{J_0,u,\epsilon}$ do not vanish, the argument proceeds similarly as above. This completes the proof.
\end{proof}

\begin{definition}\label{Defi:5.21}
  Let $h_w$ be a multisection of the obstruction bundle $\mathcal{E}_{J,w,\epsilon}\rightarrow V_w$\label{sym:mEjwe} whose fiber at $\hat{w}\in V_w$ is the vector space $\Par_{w,\hat{w}}^{hol}E_{J_0,w,\epsilon}$. Assume that $w=\varphi_\ell(u)$. Let $\widetilde{\mathcal{E}}_{J,u,\epsilon}\rightarrow \widetilde{V}_u$ be the obstruction bundle whose fiber at $v\in \widetilde{V}_u$ is the vector space $\Par_{u,v}^{hol}\widetilde{E}_{J_0,u,\epsilon}$. 
  
  Firstly, we pull $\mathcal{E}_{J,w,\epsilon}\to V_w$ back to $\widetilde{V}_u$ via $\varphi_\ell$, which we denote the pullback vector bundel by $\varphi_\ell^{*}\mathcal{E}_{J,w,\epsilon}\to \widetilde{V}_u$, whose fiber at $v\in\widetilde{V}_u$ is the vector space $\Par_{w,\varphi_\ell(v)}^{hol}E_{J_0,w,\epsilon}$. For a multisection $h_w\in C^0(V_w,\mathcal{E}_{J,w,\epsilon})$, we have a pullback multisection $\varphi_\ell^{*}(h_w)\in C^0(\widetilde{V}_u,\varphi_\ell^{*}\mathcal{E}_{J,w,\epsilon})$. We define a map
  \[\varphi_{\ell,u}^{*}:C^0(V_w,\mathcal{E}_{J,w,\epsilon})\rightarrow C^0(\widetilde{V}_u,\widetilde{\mathcal{E}}_{J,u,\epsilon}),\]
  such that for any $v\in \widetilde{V}_u$,
  \[\varphi_{\ell,u}^{*}(h_w)(v)=\underline{\varphi}_\ell^{*}\left(\varphi_\ell^{*}(h_w)(v)\right)\in \Par_{u,v}^{hol}\widetilde{E}_{J_0,u,\epsilon}.\]
  We call $\varphi_{\ell,u}^{*}(h_w)$ the {\bf pullback multisection} on the obstruction bundle $\widetilde{\mathcal{E}}_{J_0,u,\epsilon}\rightarrow \widetilde{V}_u$.
\end{definition}

\begin{proposition}\label{Prop:5.25}
  The forgetful map $\varphi_\ell$ is a surjection from $\overline{\mathcal{M}}_{0,k+\ell}(J_0,A,Y)$ to $\overline{\mathcal{M}}_{0,k}(J_0,A)$.
\end{proposition}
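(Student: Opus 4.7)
The plan is to prove surjectivity by an explicit construction: given any $w=[(w,\mathbf{z})]\in\overline{\mathcal{M}}_{0,k}(J_0,A)$ with underlying curve $\Sigma_w$, I will build a stable map $u\in\overline{\mathcal{M}}_{0,k+\ell}(J_0,A,Y)$ with $\varphi_\ell(u)=w$ by placing $\ell$ additional marked points on $w^{-1}(Y)\subset\Sigma_w$, bubbling off ghost spheres at points of intersection multiplicity $\geq 2$ so that no two new marked points coincide on the same component.

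First I would analyze the geometry of $w^{-1}(Y)$. Since $J_0\in B^{*}\subset\mathcal{J}^{*}(M,Y;J,\theta_1,E_\ell)$ and the energy of $w$ equals $\omega(A)=E_\ell$, Proposition \ref{Prop:3.16} ensures that no nonconstant component of $w$ is entirely contained in $Y$. By positivity of intersections for $J_0$-holomorphic curves with the $J_0$-complex hypersurface $Y$, the preimage of $Y$ on each nonconstant component is a finite set of isolated points, each carrying a positive local intersection multiplicity. Ghost components are homologically trivial, so all intersection mass is captured on the nonconstant components, giving
\[
\sum_i\mu_i \;=\; [w]\cdot[Y] \;=\; A\cdot D[\omega] \;=\; D\omega(A) \;=\; \ell,
\]
where $p_1,\dots,p_m$ enumerate the isolated intersection points on nonconstant components with multiplicities $\mu_1,\dots,\mu_m$.

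Next I would carry out the construction. For each $p_i$: if $\mu_i=1$, place a single new marked point at $p_i$ on the ambient nonconstant component; if $\mu_i\geq 2$, attach a new ghost sphere at $p_i$ and place $\mu_i$ distinct marked points on this ghost sphere. Labelling the newly created marked points in some order as $z_{k+1},\dots,z_{k+\ell}$, define $u$ to agree with $w$ on the original components of $\Sigma_w$ and to be the constant map with value $w(p_i)\in Y$ on each ghost sphere bubbled at $p_i$. Then $u$ is $J_0$-holomorphic, represents the class $A$, and its last $\ell$ marked points lie on $Y$.

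Finally, I would verify stability and the forgetful identity. Each newly bubbled ghost sphere carries $\mu_i+1\geq 3$ special points (its attaching node plus $\mu_i\geq 2$ marked points), while every preexisting component inherits all of its previous special points from $\Sigma_w$ together with possibly some added marked points, so the resulting $(k+\ell)$-labeled tree is $\ell$-stable. The forgetful map $\varphi_\ell$ removes $z_{k+1},\dots,z_{k+\ell}$, after which each bubbled ghost sphere retains only its attaching node and is therefore collapsed by the stabilization step, returning exactly $(w,\mathbf{z})$. Hence $\varphi_\ell(u)=w$. The main obstacle I anticipate is ensuring that the count of intersection multiplicities really exhausts $\ell$ when $w$ also has ghost components mapped into $Y$; this is resolved automatically because such ghost components are homologically trivial, so the full intersection number is already accounted for by the nonconstant components.
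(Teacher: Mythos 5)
Your construction works only under the implicit assumption that the intersection points $p_i$ are disjoint from both the nodal points of $\Sigma_w$ and the original $k$ marked points, and that no ghost tree of $w$ is mapped into $Y$. None of these assumptions holds in general for $w\in\overline{\mathcal{M}}_{0,k}(J_0,A)$, and your prescription fails in each degenerate case: if $p_i$ is a nodal point, you cannot ``place a single new marked point at $p_i$ on the ambient nonconstant component'' (marked points must be distinct from nodes); if $p_i$ coincides with one of $z_1,\ldots,z_k$, you cannot place a second marked point there; and if $w^{-1}(y)$ contains a ghost tree already mapped into $Y$, the new marked points must be distributed on that existing ghost tree rather than on a freshly attached sphere. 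Your last paragraph notices the ghost-tree issue but misdiagnoses it as a \emph{counting} problem — the total $\sum_i\mu_i=\ell$ is indeed fine — whereas the real difficulty is a \emph{placement} problem: the naive placement of the new marked points is not allowed, and one has to bubble off ghost spheres between colliding special points.

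The paper's proof handles exactly these configurations by an explicit five-case analysis: transverse intersection at a smooth unmarked point (simple marked point), tangency at a smooth unmarked point (ghost bubble with $\ell_y$ marked points), intersection at a nodal point (insert a ghost sphere between $S_\alpha$ and $S_\beta$ carrying the needed marked points), intersection at one of the original marked points $z_j$ (bubble off a ghost sphere carrying both the new marked points and a displaced $z_j$), multiple preimages of the same $y$, and ghost trees already mapped into $Y$ (distribute $\ell_y$ marked points on the ghost tree). To repair your argument you would need to fold in these cases; the homological identity $\sum_i\mu_i=\ell$ and the stability check at the end are fine, but the middle step of the construction is incomplete as written.
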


\begin{proof}
  Let $w\in \overline{\mathcal{M}}_{0,k}(J_0,A)$ and $\Sigma_w$ be the underlying curve of $w$. Then $w$ intersects $Y$ at $\ell$ points (including tangency counts),
  \begin{enumerate}
    \item Suppose $y\in w(\Sigma_w)\cap Y$ such that $z\coloneqq w^{-1}(y)$ is a single point and $y$ has intersection number $1$ (which means transverse). Moreover $z$ does not coincide with any of the $k$ marked points or any nodal point, then we add one marked point at $z$. Otherwise, if $z$ is a nodal point $z_{\alpha\beta}$ connecting components $S_\alpha$ and $S_\beta$, then add a ghost sphere $S_z$ with one marked point and two nodal points connecting to $S_\alpha$ and $S_\beta$.
    \item If $y\in w(\Sigma_w)\cap Y$ such that $z\coloneqq w^{-1}(y)$ is a single point and $y$ has intersection number $\ell_y>1$ (which means tangency order $\ell_y-1$). Moreover $z$ does not coincide with any of the $k$ marked points or any nodal point, then we add a ghost bubble $S_z$ at $z$ with $\ell_y$ new marked points on $S_z$. Otherwise, if $z$ is a nodal point connecting sphere components $S_\alpha$ and $S_\beta$, then we construct a ghost bubble $S_z$ between $S_\alpha$ and $S_\beta$ with $\ell_y$ new marked points on $S_z$.
    \item If $y\in w(\Sigma_w)\cap Y$ such that $z\coloneqq w^{-1}(y)$ is a single point and $y$ has intersection number $\ell_y$, and $z$ coincides with one of the $k$ marked points $z_j$, then we construct a ghost bubble $S_z$ at $z_j$ with $\ell_y$ new marked points and a new $z_j$ on $S_z$.
    \item If $y\in w(\Sigma_w)\cap Y$ such that $w^{-1}(y)$ contains finite many points $z^1,\cdots, z^j$. 
    \begin{enumerate}
      \item For each $z^i$, $1\leq i\leq j-1$, if $z^i$ is not one of the $k$ marked points or a nodal point, then add a new marked point at $z^i$. Otherwise, if $z^i$ is a nodal point connecting components $S_\alpha$ and $S_\beta$, then add a ghost sphere $S_{z^i}$ with one new marked point and two nodal points connecting to $S_\alpha$ and $S_\beta$. If $z^i$ is one of the $k$ makred point, then add a ghost bubble $S_{z^i}$ at $z^i$ with two new marked points on it. 
      \item For $z^j$, if $z^j$ is not one of the $k$ marked points or a nodal point, and $\ell_y=j$, then add a new marked point at $z^j$. Otherwise, if $z^j$ is not one of the $k$ marked points or a nodal point but $\ell_y>j$, then add a ghost sphere $S_{z^j}$ at $z^j$ with $\ell_y-j+1$ new marked points on it. If $z^j$ is a nodal point connecting components $S_\alpha$ and $S_\beta$, then add a ghost sphere $S_{z^j}$ with $\ell_y-j+1$ new marked points and two nodal points connecting to $S_\alpha$ and $S_\beta$. If $z^j$ is one of the $k$ marked points, then add a ghost bubble $S_{z^j}$ at $z^j$ with $\ell_y-j+2$ new marked points on it.
    \end{enumerate}
    \item If $y\in w(\Sigma_w)\cap Y$ such that $w^{-1}(y)$ contains a ghost tree $T_{ghost}$, then put $\ell_y$ new marked points on the ghost tree away from the $k$ marked points and nodal points.
  \end{enumerate}
  Denote by $\Sigma_w^{'}$ the new underlying curve given by the procedures above. Extend $w$ to $\Sigma_w^{'}$ by defining it as a constant map on the newly added components. The map obtained by this extension (denote it by $u$) belongs to $\overline{\mathcal{M}}_{0,k+\ell}(J_0,A,Y)$ and $\varphi_\ell(u)=w$.
\end{proof}

\begin{proposition}\label{Prop:5.26}
  Let $u\in \overline{\mathcal{M}}_{0,k+\ell}(J_0,A,Y)$, $w\in \overline{\mathcal{M}}_{0,k}(J_0,A)$ and $\varphi_\ell(u)=w$. Then $\varphi_\ell:\widetilde{V}_u\rightarrow V_w$ is a surjection.
\end{proposition}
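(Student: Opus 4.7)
The plan is to extend the pointwise surjectivity of $\varphi_\ell$ established in Proposition \ref{Prop:5.25} to the Kuranishi neighborhoods: given $v^* \in V_w$, I will construct $v \in \widetilde V_u$ with $\varphi_\ell(v) = v^*$ by appending $\ell$ marked points, together with any necessary ghost components, to $v^*$ following the recipe in Proposition \ref{Prop:5.25}. The crucial ingredient that makes this possible is that elements of $E_{J_0,w,\epsilon}$ vanish on $w^{-1}(N_{Y,\epsilon})$ by Proposition \ref{proposition3.28}. Hence the Kuranishi equation $\dbar_{J_0}v^* \equiv 0 \mod \Par^{hol}_{w,v^*}E_{J_0,w,\epsilon}$ forces $v^*$ to be genuinely $J_0$-holomorphic on a neighborhood of $v^{*-1}(Y)$, so that $v^*$ meets the $J_0$-complex hypersurface $Y$ with isolated positive local intersection numbers exactly as in Proposition \ref{Prop:3.16}, and the combinatorial procedure from Proposition \ref{Prop:5.25} applies verbatim to $v^*$.

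Concretely, I insert marked points (with ghost bubbles where tangencies occur or where intersections collide with nodes or existing marked points) at the $\ell$ counted intersections of $v^*$ with $Y$, producing an enlarged underlying curve $\Sigma_v$ of arithmetic genus zero with $k+\ell$ marked points, and then extend $v^*$ to a map $v : \Sigma_v \to M$ that is constant on every newly added ghost component and agrees with $v^*$ on the remaining components under the identification $\underline{\varphi}_\ell(\Sigma_v) = \Sigma_{v^*}$. By construction $\varphi_\ell(v) = v^*$. The Kuranishi equation $\dbar_{J_0}v \equiv 0 \mod \Par^{hol}_{u,v}\widetilde E_{J_0,u,\epsilon}$ is then immediate from Definition \ref{def:5.21}, since $\widetilde E_{J_0,u,\epsilon} = \underline{\varphi}_\ell^* E_{J_0,w,\epsilon}$: on non-ghost components $v$ coincides with $v^*$ under $\underline{\varphi}_\ell^*$, while on the added ghost components both sides of the equation vanish.

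The main obstacle is verifying that $\Sigma_v$ lies in the deformation neighborhood $\widetilde V_{u,\mathrm{resolve}} \times \widetilde V_{u,\mathrm{deform}}$ of $\Sigma_u$. When the intersection pattern of $v^*$ with $Y$ differs combinatorially from that of $w$—for instance a node of $\Sigma_w$ lying on $Y$ that is smoothed by $v^*$, or a tangency of $w$ that splits into several transverse intersections of $v^*$—the stratum of $\Sigma_v$ in $\overline{\mathcal M}_{0,k+\ell}$ may differ from that of $\Sigma_u$. The match is guaranteed because $v^*$ is close to $w$ in $V_w$, so the gluing and deformation data recorded by $\Sigma_{v^*} \in V_{w,\mathrm{resolve}} \times V_{w,\mathrm{deform}}$ lift through the forgetful map of curves $\underline{\varphi}_\ell : \overline{\mathcal M}_{0,k+\ell} \to \overline{\mathcal M}_{0,k}$ to parameters in $\widetilde V_{u,\mathrm{resolve}} \times \widetilde V_{u,\mathrm{deform}}$; such lifts exist since $\underline{\varphi}_\ell$ admits local sections near every stable point, and the specific choice of lift is pinned down by the locations of the added marked points constructed in the previous paragraph.
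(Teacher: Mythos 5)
Your approach differs genuinely from the paper's. The paper exploits the product parametrization $\widetilde V_u = \mu(\widetilde V_{u,\mathrm{map}}\times\widetilde V_{u,\mathrm{resolve}}\times\widetilde V_{u,\mathrm{deform}})$ and $V_w=\mu(V_{w,\mathrm{map}}\times V_{w,\mathrm{resolve}}\times V_{w,\mathrm{deform}})$, showing surjectivity factor by factor: for the resolve/deform piece it introduces compatible stabilization data $\widetilde{\bf z}_{\mathfrak{l}}$ on $\Sigma_u$ and ${\bf z}_{\mathfrak{l}}$ on $\Sigma_w$ satisfying $z_{k+i}=\underline\varphi_\ell(\widetilde z_{k+\ell+i})$, realizes both pieces as submanifolds of $\overline{\mathcal M}_{0,k+\ell+\mathfrak{l}}$ and $\overline{\mathcal M}_{0,k+\mathfrak{l}}$ with the last $\mathfrak{l}$ marked points pinned, and invokes surjectivity of the forgetful map between Deligne--Mumford spaces; for the map piece it identifies $\widetilde V_{u,\mathrm{map}}$ and $V_{w,\mathrm{map}}$ with $\ker(\pi_{J_0,u}\circ D_u\dbar_{J_0})$ and $\ker(\pi_{J_0,w}\circ D_w\dbar_{J_0})$ and argues via Remark~\ref{RMK:5.18} on ghost and non-ghost components. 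Your proposal instead constructs a single explicit preimage of each $v^*\in V_w$ by running the combinatorial recipe of Proposition~\ref{Prop:5.25} on $v^*$ (a nice observation, justified by the fact that $E_{J_0,w,\epsilon}$ vanishes near $Y$, so $v^*$ is honestly $J_0$-holomorphic near $v^{*-1}(Y)$).

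The gap is in your final paragraph. You correctly identify that the constructed $\Sigma_v$ must lie in $\widetilde V_{u,\mathrm{resolve}}\times\widetilde V_{u,\mathrm{deform}}$, but your resolution conflates two different things: the existence of \emph{some} local section of $\underline\varphi_\ell$ near a stable point, and the claim that \emph{your specific} $\Sigma_v$ (marked points placed at the intersections $v^*\cap Y$, with ghost bubbles at nodes and tangencies) is that section's value \emph{and} that this value lands in the specific chart $\widetilde V_{u,\mathrm{resolve}}\times\widetilde V_{u,\mathrm{deform}}$. This chart is cut out of the Deligne--Mumford space by the stabilization data on $\Sigma_u$, and nothing in your construction ties the location of your newly added marked points or ghost bubbles to that data. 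The paper avoids this by arranging the stabilization marked points on $\Sigma_u$ and $\Sigma_w$ to be $\underline\varphi_\ell$-compatible from the outset, so that the forgetful map manifestly restricts to a map of the relevant submanifolds. Your argument would also need a corresponding check on the map part: you verify the Kuranishi equation $\dbar_{J_0}v\equiv 0\bmod \Par^{hol}_{u,v}\widetilde E_{J_0,u,\epsilon}$, but membership in $\widetilde V_u=\mu(\widetilde V_{u,\mathrm{map}}\times\cdots)$ also requires $v$ to arise from a parameter in $\widetilde V_{u,\mathrm{map}}$, a neighborhood of $0$ in $\ker(\pi_{J_0,u}\circ D_u\dbar_{J_0})$; this follows if $v$ is sufficiently close to $u$ in the right Sobolev topology, but that is not addressed. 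To repair the argument you would need to pin down the stabilization data compatibly (as the paper does) so that your constructed lift provably lies in the charted neighborhood, or else estimate directly that your $v$ remains in the range of $\mu$.
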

\begin{proof}
  Denote by $\Sigma_w$, $\Sigma_u$ the underlying curve of $w$ and $u$. Recall the following parametrizations in Section \ref{Sec:4},
  \begin{align*}
    &\widetilde{V}_u=\widetilde{V}_{u,map}\times \widetilde{V}_{u,resolve}\times \widetilde{V}_{u,deform},\\
    &V_w=V_{w,map}\times V_{w,resolve}\times V_{w,deform}.
  \end{align*}
  First, we consider $\widetilde{V}_{u,resolve}\times\widetilde{V}_{u,deform}$. Denote by $\widetilde{\bf z}_{k+\ell}=(\widetilde{z}_1,\ldots,\widetilde{z}_{k+\ell})$ the $k+\ell$ marked points of $\Sigma_u$, and ${\bf z}_k=(z_1,\ldots,z_k)$ the $k$ marked points of $\Sigma_w$. Using the stabilization data defined in \cite[Def.~17.7~(1)(8)]{FOOO-T}, we add marked points $\widetilde{\bf z}_{\mathfrak{l}}=(\widetilde{z}_{k+\ell+1},\ldots,\widetilde{z}_{k+\ell+\mathfrak{l}})$ to $\Sigma_u$, and ${\bf z}_{\mathfrak{l}}=(z_{k+1},\ldots,z_{k+\mathfrak{l}})$ to $\Sigma_w$ such that,
  \begin{enumerate}
    \item $z_{k+i}=\underline{\varphi}_\ell(\widetilde{z}_{k+\ell+i})$ on $\Sigma_w$ and $\Sigma_u$ for $i=1,\ldots,\mathfrak{l}$.
    \item $(\Sigma_u,\widetilde{\bf z}_{k+\ell}\cup\widetilde{\bf z}_{\mathfrak{l}})$ is stable.
    \item $(\Sigma_w,{\bf z}_k\cup{\bf z}_{\mathfrak{l}})$ is stable.
  \end{enumerate}
  With these settings, the forgetful map $\underline{\varphi}_\ell$ is the forgetful map between Deligne-Mumford spaces $\overline{\mathcal{M}}_{0,k+\ell+\mathfrak{l}}$ and $\overline{\mathcal{M}}_{0,k+\mathfrak{l}}$ by forgetting the middle $\ell$ marked points of curves in $\overline{\mathcal{M}}_{0,k+\ell+\mathfrak{l}}$, which is a surjection. $\widetilde{V}_{u,resolve}\times\widetilde{V}_{u,deform}$ is a submanifold of $\overline{\mathcal{M}}_{0,k+\ell+\mathfrak{l}}$, which contains $\Sigma_u$ and fixes the last $\mathfrak{l}$ marked points. $V_{w,resolve}\times V_{w,deform}$ is a submanifold of $\overline{\mathcal{M}}_{0,k+\mathfrak{l}}$, which contains $\Sigma_w$ and fixes the last $\mathfrak{l}$ marked points. Since $\underline{\varphi}_\ell(\Sigma_u)=\Sigma_w$, $z_{k+i}=\underline{\varphi}_\ell(\widetilde{z}_{k+\ell+i})$ for $i=1,\ldots,\mathfrak{l}$, we conclude that $\underline{\varphi}_\ell:\widetilde{V}_{u,resolve}\times\widetilde{V}_{u,deform}\rightarrow V_{w,resolve}\times V_{w,deform}$ is surjective.

  As for $\widetilde{V}_{u,map}$, notice that (see Definition \ref{Def:4.22.2})
  \[\widetilde{V}_{u,map}\cong \ker(\pi_{J_0,u}\circ D_u\bar{\partial}_{J_0}),\  V_{w,map}\cong\ker(\pi_{J_0,w}\circ D_w\bar{\partial}_{J_0}),\]
  where
  \[\pi_{J_0,u}:W^{r-1,p}(\Omega^{0,1}(\Sigma_u,u^{*}TM))\rightarrow \frac{W^{r-1,p}(\Omega^{0,1}(\Sigma_u,u^{*}TM))}{E_{J_0,u}},\]
  $\pi_{J_0,w}$ is defined in a similar way for $w$. For each $v\in \ker(\pi_{J_0,u}\circ D_u\bar{\partial}_{J_0})$, the under lying curve of $v$ is also $\Sigma_u$. We observe that:
  \begin{enumerate}
    \item If a sphere component $S_\alpha\subset \Sigma_u$ is contracted by $\underline{\varphi}_\ell$, then $S_\alpha$ contains some of the last $\ell$ marked points, thereby the elements of $\widetilde{E}_{J_0,u,\epsilon}$ vanish on it (see Remark \ref{RMK:5.18}). Moreover, $v$ is also a constant map on $S_\alpha$.
    \item If $S_\alpha$ is not contracted by $\underline{\varphi}_\ell$, then $\widetilde{E}_{J_0,u,\epsilon}\cong E_{J_0,w,\epsilon}$ on $S_\alpha$ (see Remark \ref{RMK:5.18}), then
    \[(\pi_{J_0,w}\circ D_w\bar{\partial}_{J_0})(\varphi_\ell(v))|_{S_\alpha}=(\pi_{J_0,u}\circ D_u\bar{\partial}_{J_0})(v)|_{S_\alpha}=0.\]
  \end{enumerate}
  Therefore $\varphi_\ell:\ker(\pi_{J_0,u}\circ D_u\bar{\partial}_{J_0})\to \ker(\pi_{J_0,w}\circ D_w\bar{\partial}_{J_0})$ is a surjection. Combining our discussion for $\underline{\varphi}_\ell:\widetilde{V}_{u,resolve}\times\widetilde{V}_{u,deform}\rightarrow V_{w,resolve}\times V_{w,deform}$ above, we conclude that $\varphi_\ell$ maps $\widetilde{V}_u$ onto $V_w$.

\end{proof}

\subsubsection{Submanifolds of the Kuranishi neighborhoods}
\begin{remark}\label{RMK:5.23}
  Suppose $w\in \overline{\mathcal{M}}_{0,k}(J_0,A)$ has the underlying curve $\Sigma_w$. Recall that the neighborhood $V_w$ is parameterized by
  \[(\nu,\mathfrak{a},\vartheta)\in V_{w,map}\times V_{w,resolve}\times V_{w,deform},\]
  where (see Section \ref{Sec:4})
  \[\mathfrak{a}=(\mathfrak{a}_z)\in\bigoplus_{\substack{z=z_{\alpha\beta}=z_{\beta\alpha}\\ \text{ is a nodal point of }\Sigma_w}}T_{z_{\alpha\beta}}S_\alpha\otimes T_{z_{\beta\alpha}}S_\beta.\]
  We list the nodal points of $\Sigma_w$ by $(z_{nod,1},\ldots,z_{nod,\mathfrak{n}})$. For a $k$-labeled tree $T$, denote by
  \[U_{w,T}\coloneqq \{\hat{w}\in V_w|\hat{w}\text{ is modeled on tree }T\}.\]
  If $U_{w,T}\neq \emptyset$, then there is a subset $I_T\subset\{1,\ldots,\mathfrak{n}\}$, such that $U_{w,T}$ is parametrized by $(\nu,(\mathfrak{a}_{z_{nod,i}}),\vartheta)$, where $\mathfrak{a}_{z_{nod,i}}= 0$ if and only if $i\in I_T$.

  Therefore $U_{w,T}$ {\bf is a smooth submanifold} of $V_w$. If $T$ has more than one node, then $U_{w,T}$ is of real codimension at least $2$, since there will be at least one $\mathfrak{a}_{z_{nod,i}}= 0$ in this case.
\end{remark}

Next, fix a $w\in\overline{\mathcal{M}}_{0,k}(J_0,A)$ and the Kuranishi neighborhood $V_w$, we will establish a result related to the space of maps that have tangency with $Y$. This is a simpler variation of Section 6 in \cite{CM}.

Denote by $V_{w;o}$ the open subset of $V_w$ such that
\[V_{w;o}\coloneqq \{v\in V_w|v\not\in\bigcup_{\substack{T\text{ with more}\\\text{than one}\\\text{ node }}}U_{w,T}\}.\]
In other words, $v\in V_{w;o}$ are maps with only one node. Let $\Sigma_v\cong S^2$ be the underlying curve of $v\in V_{w;o}$, and $\mathfrak{z}\in \Sigma_v$ be such that $v(\mathfrak{z})\in Y$.

\begin{lemma}[{\cite[Lem.~6.6]{CM}}]\label{lemma:5.25}
  For $r-2/p>1$, $v\in V_{w;o}$ with the obstruction bundle fiber $E_{J_0,v}$. Denote by
  \[\pi_{J_0,v}\coloneqq W^{r-1,p}(\Omega^{0,1}(S^2,v^{*}TM))\rightarrow W^{r-1,p}(\Omega^{0,1}(S^2,v^{*}TM))/E_{J_0,v}\]
  the projection. Define the spaces
  \begin{align*}
    &B_0^{r,p}\coloneqq\{\xi|\xi\in W^{r,p}(S^2,v^{*}TM),\ d\xi(\mathfrak{z})=0\},\\
    &E_0^{r-1,p}\coloneqq \{\eta|\eta\in W^{r-1,p}(\Omega^{0,1}(S^2,v^{*}TM)),\ \eta(\mathfrak{z})=0\},\\
    &E_{0,quo}^{r-1,p}\coloneqq E_0^{r-1,p}/E_{J_0,v}.
  \end{align*}
  Then the linear operator $F_0\coloneqq\pi_{J_0,v}\circ D_v\dbar_{J_0}:B_0^{r,p}\rightarrow E_{quo}^{r-1,p}$ is surjective.
\end{lemma}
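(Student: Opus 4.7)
The strategy is to reduce the claim via a snake lemma diagram chase to the surjectivity of the unrestricted operator
\[
F\coloneqq \pi_{J_0,v}\circ D_v\dbar_{J_0}:B^{r,p}\to W^{r-1,p}(\Omega^{0,1}(S^2,v^{*}TM))/E_{J_0,v},
\]
which holds by the defining property of $E_{J_0,v}$ as a complement to the image of $D_v\dbar_{J_0}$.

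First, I would verify that $F_0$ actually takes values in $E_{0,quo}^{r-1,p}$. Using the complex linear connection $\widetilde{\nabla}=\nabla-\tfrac{1}{2}J_0(\nabla J_0)$ from Section \ref{Sec:4}, the linearization at a $J_0$-holomorphic map satisfies the pointwise identity $(D_v\dbar_{J_0}\xi)(\mathfrak{z})=(\widetilde{\nabla}\xi)^{0,1}(\mathfrak{z})$. Interpreting $d\xi(\mathfrak{z})=0$ as $\widetilde{\nabla}\xi(\mathfrak{z})=0$ forces $(D_v\dbar_{J_0}\xi)(\mathfrak{z})=0$ for $\xi\in B_0^{r,p}$. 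Combined with the fact from Section \ref{section:5.1} that the obstruction data $E_{J_0,v}$ can be arranged to vanish near $v^{-1}(Y)\ni\mathfrak{z}$, one obtains $E_{J_0,v}\subset E_0^{r-1,p}$, so $F_0(B_0^{r,p})\subset E_{0,quo}^{r-1,p}$.

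Next, I set up the commutative diagram of short exact sequences
\[
\begin{tikzcd}[column sep=small]
0 \arrow[r] & B_0^{r,p} \arrow[r] \arrow[d,"F_0"'] & B^{r,p} \arrow[r] \arrow[d,"F"'] & \mathrm{Hom}_{\mathbb{R}}(T_{\mathfrak{z}}S^2,T_{v(\mathfrak{z})}M) \arrow[r] \arrow[d,"\bar{F}"'] & 0 \\
0 \arrow[r] & E_{0,quo}^{r-1,p} \arrow[r] & E_{quo}^{r-1,p} \arrow[r] & \overline{T^{*}_{\mathfrak{z}}S^2}\otimes_{\mathbb{C}} T_{v(\mathfrak{z})}M \arrow[r] & 0
\end{tikzcd}
\]
where $E_{quo}^{r-1,p}\coloneqq W^{r-1,p}(\Omega^{0,1}(S^2,v^{*}TM))/E_{J_0,v}$, the horizontal right arrows are the $1$-jet evaluation $d(\cdot)(\mathfrak{z})$ and pointwise evaluation at $\mathfrak{z}$, and $\bar{F}$ is the induced map. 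The pointwise identity above shows that $\bar{F}$ is the $(0,1)$-projection, which is surjective with kernel the $(1,0)$-subspace of real dimension $2n$.

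Applying the snake lemma and using that $F$ and $\bar{F}$ are surjective, we obtain $\coker(F_0)=\mathrm{Im}(\delta)$, where $\delta:\ker\bar{F}\to\coker(F_0)$ is the connecting homomorphism. Hence $F_0$ is surjective if and only if the evaluation map $\ker F\to\mathrm{Hom}_{\mathbb{R}}(T_{\mathfrak{z}}S^2,T_{v(\mathfrak{z})}M),\ \xi\mapsto d\xi(\mathfrak{z})$, surjects onto the $(1,0)$-subspace $\ker\bar{F}$. This is the main obstacle. To realize a prescribed $(1,0)$-form $L$, I would construct a local solution $\xi_{\mathrm{loc}}$ of $D_v\dbar_{J_0}\xi_{\mathrm{loc}}=0$ on a small disk $U\ni\mathfrak{z}$ chosen so that $U\cap\mathrm{supp}(E_{J_0,v})=\emptyset$, with $d\xi_{\mathrm{loc}}(\mathfrak{z})=L$ (possible because $D_v\dbar_{J_0}$ is locally an elliptic operator with surjective local solvability of the inhomogeneous problem); multiply by a bump $\chi$ with $\chi\equiv 1$ near $\mathfrak{z}$ and $\mathrm{supp}\,\chi\subset U$; solve $D_v\dbar_{J_0}\zeta=D_v\dbar_{J_0}(\chi\xi_{\mathrm{loc}})$ with $\zeta$ supported in the annular region $\mathrm{supp}(d\chi)\subset U\setminus\{\mathfrak{z}\}$, using $L^p$-solvability of $\dbar$ on a disk with compactly supported source and absorbing any finite-dimensional period obstruction into the surjectivity of $F$ (equivalently, into $E_{J_0,v}$); and set $\xi\coloneqq\chi\xi_{\mathrm{loc}}-\zeta$, which lies in $\ker F$ and satisfies $d\xi(\mathfrak{z})=L$ because $d\zeta(\mathfrak{z})=0$.
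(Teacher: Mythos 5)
Your first paragraph (checking that $F_0$ lands in $E_{0,quo}^{r-1,p}$) and the snake--lemma reduction are sound as a reformulation: given the two short exact sequences and the surjectivity of $F$ and $\bar F$, surjectivity of $F_0$ is indeed equivalent to the jet-evaluation map $\ker F\to\ker\bar F,\ \xi\mapsto d\xi(\mathfrak{z})$, being onto. The problem is that you have not actually established that surjectivity, and your construction in the last paragraph does not close the gap. Solving $D_v\dbar_{J_0}\zeta=D_v\dbar_{J_0}(\chi\xi_{\mathrm{loc}})$ with $\zeta$ supported in the annulus $\mathrm{supp}(d\chi)$ is \emph{not} what $L^p$-solvability of $\dbar$ gives you: the Cauchy transform (or any global solution on $S^2$) produces a $\zeta$ that is holomorphic near $\mathfrak{z}$ but generically has a \emph{nonzero} linear term there, so $d\zeta(\mathfrak{z})=0$ fails. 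Demanding a solution compactly supported away from $\mathfrak{z}$ is a genuinely constrained problem whose finite-dimensional obstruction is precisely the quantity you need to kill, and the phrase ``absorbing any finite-dimensional period obstruction into the surjectivity of $F$ (equivalently, into $E_{J_0,v}$)'' is not an argument: $E_{J_0,v}$ is supported away from $\mathfrak{z}$, and nothing you have said shows the residual obstruction can be compensated by a section with vanishing $1$-jet at $\mathfrak{z}$. Indeed, if you could find $\zeta\in B_0^{r,p}$ with $F\zeta=F(\chi\xi_{\mathrm{loc}})$, you would already have shown $F(\chi\xi_{\mathrm{loc}})\in F_0(B_0^{r,p})$, which is circular. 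More pointedly, the jet-surjectivity you are reducing to \emph{is} Lemma \ref{lemma:5.26} of the paper, and the paper proves Lemma \ref{lemma:5.26} as a \emph{consequence} of the present lemma, not the other way around.

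The paper's route is entirely different and avoids constructing anything. It argues by duality: any continuous functional $\zeta$ on $W^{r-1,p}/E_{J_0,v}$ that annihilates $F_0(B_0^{r,p})$ must, by elliptic regularity and density of $B_0^{r,p}$ in $W^{1,p}$, annihilate all smooth $(0,1)$-forms compactly supported in $S^2\setminus\{\mathfrak{z}\}$, hence is a distribution supported at $\mathfrak{z}$; by the structure theorem for point-supported distributions it is $\sum_{|\alpha|\le N}c_\alpha D^\alpha\delta_{\mathfrak{z}}$, and the Sobolev regularity $1<r-2/p<2$ forces $N=0$, so $\zeta=c_0\delta_{\mathfrak{z}}$, which already annihilates $E_0^{r-1,p}$. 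That is a closed argument; yours, as written, is not. If you want to salvage your diagram-chase framing, you would still need to supply a self-contained proof of jet-surjectivity -- for example via the dual argument -- at which point the snake lemma is no longer doing any work.
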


\begin{proof}\label{Fix:M2:51}
  Firstly, assume that $1<r-2/p<2$. Denote by $W^{r-1,p*}(\Omega^{0,1}(S^2,v^{*}TM))$ the dual space of $W^{r-1,p}(\Omega^{0,1}(S^2,v^{*}TM))$. The following space
  \[W^{r-1,p*}_{v,quo}\coloneqq \{\zeta|\zeta\in W^{r-1,p*}(\Omega^{0,1}(S^2,v^{*}TM)),\ \zeta\text{ vanishes on }E_{J_0,v}\}\]
  is the dual of $W^{r-1,p}(\Omega^{0,1}(S^2,v^{*}TM))/E_{J_0,v}$.

  Assume that $\zeta\in W^{r-1,p*}_{v,quo}$ and $\zeta$ vanishes on $F_0(B_0^{r,p})$, that is
  \[\zeta(D_v\dbar_{J_0}\xi)=0\]
  for all $\xi\in B_0^{r,p}$. In particular, this implies that $\zeta(D_v\dbar_{J_0}\xi)=0$ for all $\xi$ with compact support in $S^{*}\coloneqq S^2\setminus\{\mathfrak{z}\}$. According to elliptic regularity for distributions (see Theorem 8.12 in \cite{Rudin}), the restriction of $\zeta$ on $S^{*}$ can be represented by a smooth section $\zeta_{S^{*}}:S^{*}\rightarrow \Omega^{0,1}(S^2,v^{*}TM)$, such that
  \[\zeta(\hat{\eta})=\langle \hat{\eta},\zeta_{S^{*}}\rangle_{L^2}\]
  for all $\hat{\eta}\in C^\infty_0(S^{*},\Omega^{0,1}(S^2,v^{*}TM))$.

  Recall that
  \[\pi_{J_0,v}\circ D_v\dbar_{J_0}:W^{r,p}(S^2,v^{*}TM)\rightarrow W^{r-1,p}(\Omega^{0,1}(S^2,v^{*}TM))/E_{J_0,v}\]
  is surjective, thus there is a $\widetilde{\xi}\in W^{r,p}(S^2,v^{*}TM)$ and a $f\in E_{J_0,v}$ such that
  \[D_v\dbar_{J_0}(\widetilde{\xi})=\hat{\eta}+f.\]
  Since $B_0^{r,p}$ is dense in $W^{1,p}(S^2,v^{*}TM)$, there is a sequence $\widetilde{\xi}_i\in B_0^{r,p}$ such that
  \[\lim_{i\rightarrow \infty}\|\widetilde{\xi}_i-\widetilde{\xi}\|_{W^{1,p}}=0,\]
  and we have
  \begin{align*}
    &\zeta(\hat{\eta})=\zeta(D_v\dbar_{J_0}(\widetilde{\xi}))=\langle D_v\dbar_{J_0}(\widetilde{\xi}),\zeta_{S^{*}}\rangle_{L^2}\\
    =&\lim_{i\rightarrow \infty}\langle D_v\dbar_{J_0}(\widetilde{\xi}_i),\zeta_{S^{*}}\rangle_{L^2}=\lim_{i\rightarrow \infty}\zeta(D_v\dbar_{J_0}(\widetilde{\xi}_i))=0.
  \end{align*}
  This is true for any $\hat{\eta}\in C^\infty_0(S^{*},\Omega^{0,1}(S^2,v^{*}TM))$, therefore $\zeta$ is a distribution supported at $\mathfrak{z}$. According to Theorem 6.25 in \cite{Rudin}, $\zeta$ has the following form
  \[\zeta=\sum_{|\alpha|\leq N}c_\alpha D^\alpha\delta_{\mathfrak{z}},\]
  \label{Fix:M2:52}where $c_\alpha:\mathbb{C}^n\rightarrow \mathbb{R}$ are $\mathbb{R}$-linear functions, $\delta{\mathfrak{z}}$ is the distribution defined by
  \[\delta{\mathfrak{z}}(\eta)=\eta(0),\ \eta\in C^\infty(\Omega^{0,1}(S^2,v^{*}TM)),\]
  $D^\alpha$ is the derivative of multi-indices $\alpha$, such that
  \[D^\alpha\delta_{\mathfrak{z}}(\eta)=(-1)^{|\alpha|}D^\alpha\eta(\mathfrak{z}),\ \eta\in C^\infty(\Omega^{0,1}(S^2,v^{*}TM)).\]
  $\zeta\in W^{r-1,p*}(\Omega^{0,1}(S^2,v^{*}TM))$ implies that for $c_\alpha\neq 0$, there is
  \[|D^\alpha\eta(\mathfrak{z})|\leq C\|\eta\|_{W^{r-1,p}},\ \forall \eta\in C^\infty(\Omega^{0,1}(S^2,v^{*}TM)).\]
  According to the Sobolev embedding, $|\alpha|\leq r-1-2/p$. However, our assumption further implies that $|\alpha|<1$, which means
  \[\zeta=c_0\delta_{\mathfrak{z}} \]
  and
  \[\zeta(\eta)=c_0\eta(0)=0,\ \forall \eta\in E_0^{r-1,p}.\]

  From the argument above, we conclude that if $\zeta\in W^{r-1,p*}_{v,quo}$ vanishes on $F_0(B_0^{r,p})$, then $\zeta$ vanishes on $E_{0,quo}^{r-1,p}$ as well. Therefore $F_0$ is a surjection for $1<r-2/p<2$. The general case of $r,p$ with $r-2/p>1$ follows from elliptic regularity as in Lemma 6.6 of \cite{CM}.
\end{proof}

Denote by $\pi_{TM/TY}:TM\rightarrow TM/TY\cong \mathbb{C}$ the projection map, and
\[j_Y^1\ev_{\mathfrak{z}}:V_{w;o}\rightarrow TM/TY,\ v\mapsto \pi_{TM/TY}\circ dv(\mathfrak{z})\]
the normal $1$-jet evaluation map to the normal bundle $TM/TY$. The linearization of $j_Y^1\ev_{\mathfrak{z}}$ at $v\in V_{w;o}$ is the linear operator
\[L_{\mathfrak{z}}:T_vV_w\rightarrow \mathbb{C}^n/\mathbb{C}^{n-1}\cong \mathbb{C},\ \xi\mapsto \pi_{TM/TY}\circ d\xi(\mathfrak{z}).\]

\begin{lemma}[{\cite[Lem.~6.5]{CM}}]\label{lemma:5.26}\label{sym:JY1evfrakz}
  For $r-2/p>1$ and $v\in V_{w;o}$, the map $L_{\mathfrak{z}}$ is surjective.
\end{lemma}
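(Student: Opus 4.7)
My plan is to deduce Lemma~\ref{lemma:5.26} from Lemma~\ref{lemma:5.25} by (i) reducing the question to a statement about the map-variation directions, and (ii) producing a seed variation $\xi_0$ whose 1-jet at $\mathfrak{z}$ realizes the prescribed normal value while placing the obstruction in the image of $F_0$.

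First I would observe that the marked-point-deformation and node-resolution pieces of $T_v V_w$ contribute nothing essential to $L_\mathfrak{z}$: marked-point deformations leave the map $v$ unchanged near $\mathfrak{z}$, and since $v\in V_{w;o}$ the resolution parameters can be absorbed into a choice of local model on $\Sigma_v\cong S^2$ away from $\mathfrak{z}$. Thus it suffices to show that for every $c\in \mathbb{C}$ there exists $\xi\in K:=\ker(\pi_{J_0,v}\circ D_v\dbar_{J_0})\subset W^{r,p}(S^2,v^*TM)$ with $\pi_{TM/TY}(d\xi(\mathfrak{z}))=c$.

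The core linear-algebra step is the following claim: one can choose $\xi_0\in W^{r,p}(S^2,v^*TM)$ satisfying simultaneously $L_\mathfrak{z}(\xi_0)=c$ and $(D_v\dbar_{J_0}\xi_0)(\mathfrak{z})=0$. To see this, write
\[
(D_v\dbar_{J_0}\xi_0)(\mathfrak{z})=\tfrac12\bigl(\nabla\xi_0+J_0\,\nabla\xi_0\,j\bigr)(\mathfrak{z})+\tfrac12\bigl(\nabla_{\xi_0(\mathfrak{z})}J_0\bigr)dv(\mathfrak{z})\,j,
\]
so that this value depends only on the $(0,1)$-part of $\nabla\xi_0(\mathfrak{z})$ (via the principal symbol of $\dbar_{J_0}$) together with $\xi_0(\mathfrak{z})$, whereas $L_\mathfrak{z}(\xi_0)$ depends on the normal component of $d\xi_0(\mathfrak{z})$, which is read off the complementary $(1,0)$-part. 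Because these pieces of the 1-jet are independent, I can freely fix $\xi_0(\mathfrak{z})$, use the surjectivity of the $(0,1)$-symbol to prescribe the $(0,1)$-part of $d\xi_0(\mathfrak{z})$ making $(D_v\dbar_{J_0}\xi_0)(\mathfrak{z})=0$, and then separately prescribe the $(1,0)$-part to arrange $L_\mathfrak{z}(\xi_0)=c$. The resulting 1-jet is extended to a globally defined $\xi_0\in W^{r,p}$ via a smooth bump function supported in a small coordinate chart around $\mathfrak{z}$.

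Since the modified obstruction data $E_{J_0,v,\epsilon}$ from Section~\ref{section:5.1} vanishes on $v^{-1}(N_{Y,\epsilon})\ni\mathfrak{z}$, the vanishing $(D_v\dbar_{J_0}\xi_0)(\mathfrak{z})=0$ upgrades to $F(\xi_0):=\pi_{J_0,v}(D_v\dbar_{J_0}\xi_0)\in E_{0,quo}^{r-1,p}$. Lemma~\ref{lemma:5.25} then supplies $\xi_1\in B_0^{r,p}$ with $F(\xi_1)=F(\xi_0)$. Setting $\xi:=\xi_0-\xi_1$, we have $F(\xi)=0$, so $\xi\in K$; and because $d\xi_1(\mathfrak{z})=0$ by the defining condition of $B_0^{r,p}$, we obtain $L_\mathfrak{z}(\xi)=L_\mathfrak{z}(\xi_0)-L_\mathfrak{z}(\xi_1)=c-0=c$, finishing the proof.

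The main technical obstacle is the 1-jet independence in the second paragraph: one must be careful that the normal piece read by $L_\mathfrak{z}$ and the piece controlled by the principal symbol of $D_v\dbar_{J_0}$ actually decouple in the 1-jet decomposition at $\mathfrak{z}$. This decoupling is ultimately just the statement that the $(1,0)$ and $(0,1)$ parts of $d\xi(\mathfrak{z})$ are complementary and that $\pi_{TM/TY}$ is a submersion on $T_{v(\mathfrak{z})}M$, but it is the only place where the argument uses anything more than soft functional-analytic input from Lemma~\ref{lemma:5.25}.
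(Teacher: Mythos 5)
Your proposal is correct and follows essentially the same strategy as the paper's proof: build a seed section $\xi_0$ with the prescribed normal $1$-jet at $\mathfrak{z}$ while ensuring $(D_v\dbar_{J_0}\xi_0)(\mathfrak{z})=0$, then invoke Lemma~\ref{lemma:5.25} to subtract off a correction $\xi_1\in B_0^{r,p}$ (which, having vanishing $1$-jet, does not disturb $L_\mathfrak{z}$). The only cosmetic difference is in the seed construction: the paper takes the concrete local holomorphic section $\widetilde{\xi}(z)=z\nu$ near $\mathfrak{z}$, for which both the $(0,1)$-part of the derivative and the value at $\mathfrak{z}$ vanish, so the obstruction at $\mathfrak{z}$ dies for free; you instead argue abstractly that the $(0,1)$-part (controlling the CR symbol) and the $(1,0)$-part (read by $L_\mathfrak{z}$) of the $1$-jet decouple and can be prescribed independently --- this is the conceptual reason the paper's explicit choice works, so the two are interchangeable.
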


\begin{proof}
  Take a neighborhood $O\subset \mathbb{C}$ around $0$, then for any $\nu\in \mathbb{C}^n$ there is a holomorphic section $\widetilde{\xi}$ of the vector bundle $\mathbb{C}^n\times O\rightarrow O$
  \[\widetilde{\xi}(z)\coloneqq z\nu,\ z\in\mathbb{C}.\]
  We have
  \[\widetilde{\xi}(0)=0,\ d\widetilde{\xi}(0)=\nu.\]
  Regard $O$ as a neighborhood of $\mathfrak{z}\in\Sigma_v\cong S^2$, the bundle $\mathbb{C}^n\times O\rightarrow O$ as a neighborhood of $v^{*}TM$ with respect to the trivialization $\varrho$. We extend $\widetilde{\xi}$ to a section of $W^{r,p}(\Sigma_v,v^{*}TM)$.

  Denote by
  \[\widetilde{\eta}\coloneqq D_v\dbar_{J_0}\widetilde{\xi}.\]
  $\widetilde{\eta}$ represents an element $[\widetilde{\eta}]\in E_0^{r-1,p}$ defined in Lemma \ref{lemma:5.25}.  According to Lemma \ref{lemma:5.25}, $D_v\dbar_{J_0}:B_0^{r,p}\rightarrow E_0^{r-1,p}$ is a surjection, thus there is a $\widetilde{\xi}^{'}\in B_0^{r,p}$ such that
  \[D_v\dbar_{J_0}(\widetilde{\xi}^{'})=[\widetilde{\eta}].\]
  Denote by $\xi\coloneqq \widetilde{\xi}-\widetilde{\xi}^{'}$, we have
  \[D_v\dbar_{J_0}(\xi)=0,\]
  therefore $\xi\in T_vV_w$. Moreover,
  \[d\xi=d\widetilde{\xi}-d\widetilde{\xi}^{'}=\nu.\]
  This completes the proof.
\end{proof}

\begin{proposition}\label{Prop:5.27}
  For $r-2/p>1$, the following space
  \[U_{Y,w,tan}\coloneqq\{v|v\in V_{w,o},\ v\in j_Y^1\ev_{\mathfrak{z}}^{-1}(0)\}\]
  is a smooth submanifold in $V_w$ of real codimension at least $2$.
\end{proposition}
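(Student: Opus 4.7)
My plan is to realize $U_{Y,w,tan}$ as the zero locus of a smooth map with surjective linearization and invoke the implicit function theorem. Since the target $TM/TY \cong \mathbb{C}$ has real dimension $2$, it suffices to show that $j_Y^1\ev_{\mathfrak{z}}$ is a smooth submersion: then $U_{Y,w,tan}$ is a smooth submanifold of $V_{w;o}$ of real codimension $2$, and hence of $V_w$ (since $V_{w;o}$ is open in $V_w$). The surjectivity of the linearization at every $v \in V_{w;o}$ is exactly the content of Lemma \ref{lemma:5.26}.

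The single point that requires care is that $\mathfrak{z}$ is not fixed but depends on $v$ through the defining condition $v(\mathfrak{z})\in Y$, and at a tangency the implicit function theorem for tracking $\mathfrak{z}(v)$ breaks down. To handle this, I would work on the universal incidence variety
\[\mathcal{Z} \coloneqq \{(v,\mathfrak{z}) \in V_{w;o} \times D : v(\mathfrak{z}) \in Y\},\]
where $D \subset \Sigma_{v_0}$ is a small disk around the tangency point $\mathfrak{z}_0$ of a fixed $v_0 \in U_{Y,w,tan}$. A jet-$0$ variant of Lemma \ref{lemma:5.25} (replacing the conditions $d\xi(\mathfrak{z})=0$ in $B_0^{r,p}$ and $\eta(\mathfrak{z})=0$ in $E_0^{r-1,p}$ by suitable value-level conditions, with the same distributional argument) shows that $(v,\mathfrak{z}) \mapsto \pi_{TM/TY}(v(\mathfrak{z}))$ is a submersion, so $\mathcal{Z}$ is a smooth codimension-$2$ submanifold of $V_{w;o}\times D$. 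Then Lemma \ref{lemma:5.26} implies that the restriction of $(v,\mathfrak{z}) \mapsto \pi_{TM/TY}(dv(\mathfrak{z}))$ to $\mathcal{Z}$ is also a submersion, cutting out a smooth codimension-$4$ submanifold $\mathcal{T}\subset V_{w;o}\times D$ whose image under the projection to $V_{w;o}$ is precisely $U_{Y,w,tan}$.

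The main obstacle will be showing that the projection $\mathcal{T}\to V_{w;o}$ is a local diffeomorphism onto its image, so that $U_{Y,w,tan}$ inherits the codimension-$2$ smooth structure. A direct tangent-space calculation at $(v_0,\mathfrak{z}_0)$ identifies the kernel of this differential with $\{(0,\tau) : c\,\tau = 0\}$, where $c=\pi_{TM/TY}(d^2 v_0/dz^2(\mathfrak{z}_0)) \in \mathbb{C}$ is the ``normal second derivative'' of $v_0$ at $\mathfrak{z}_0$. For a simple tangency ($c\neq 0$) the projection is an immersion, and $U_{Y,w,tan}$ has codimension exactly $2$ near $v_0$. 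The locus $\{c=0\}$ of higher-order tangencies forms a proper closed substratum: an inductive refinement applying the same surjectivity argument to higher jet evaluations shows this substratum has even larger real codimension in $V_{w;o}$. Combining the two cases yields the ``at least $2$'' conclusion of the proposition.
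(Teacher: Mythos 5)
Your proof is correct but takes a genuinely different, and more careful, route than the paper's. The paper's argument is a single sentence: by Lemma \ref{lemma:5.26} the linearization $L_{\mathfrak{z}}$ is surjective, so the implicit function theorem applies. Read literally, this treats $\mathfrak{z}$ as a fixed parameter on which $j_Y^1\ev_{\mathfrak{z}}$ smoothly depends, and you are right that this is the delicate point: $\mathfrak{z}$ is determined by $v$ through $v(\mathfrak{z})\in Y$, it fails to be a smooth (or even single-valued) function of $v$ exactly on the tangency locus, and for a truly fixed $\mathfrak{z}_0$ the zero set $\{v:\pi_{TM/TY}(dv(\mathfrak{z}_0))=0\}$ is not $U_{Y,w,tan}$ --- one would additionally need $v(\mathfrak{z}_0)\in Y$, which cuts real codimension four, and one must then sweep over $\mathfrak{z}_0$, gaining two real dimensions back. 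Your incidence-variety argument is precisely the right way to make this rigorous and is the device used in \cite[Sec.~6]{CM}: $\mathcal{Z}$ and $\mathcal{T}$ are honest zero sets of smooth maps with surjective linearizations (the $0$-jet surjectivity you invoke for $\mathcal{Z}$ is the easier, evaluation-map variant of \cite[Lem.~6.6]{CM}), and the rank analysis of the projection $\mathcal{T}\to V_{w;o}$ recovers codimension $2$ at a simple tangency, with local injectivity following from positivity of intersections since $v$ is $J_0$-holomorphic near $Y$ by the modification of the obstruction data. Two points to tighten. First, the inductive higher-jet argument for the stratum $\{c=0\}$ requires $r-2/p$ to exceed the jet order at each step, as in \cite[Lem.~6.7]{CM}, so it is really a statement for $r$ large enough or after elliptic bootstrap. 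Second, since a map $v_0$ may have several distinct tangency points, $U_{Y,w,tan}$ is near such a $v_0$ a union of several branches and not literally a single submanifold; what your argument actually gives (and what Corollary \ref{Coro:5.30} and Theorem \ref{Thm:5.22} need) is that $U_{Y,w,tan}$ is a locally finite union of smooth submanifolds of codimension at least $2$, which is the statement one should record.
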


\begin{proof}
  According to Lemma \ref{lemma:5.26}, the linearization of $j_Y^1\ev_{\mathfrak{z}}$ is a surjection to $\mathbb{C}$, therefore $j_Y^1\ev_{\mathfrak{z}}^{-1}(0)$ is a smooth submanifold of codimension at least $2$.
\end{proof}

\begin{proposition}\label{Prop:5.24}
  For $w\in\overline{\mathcal{M}}_{0,k}(J_0,A)$ and $V_w$, let $\ev_j:V_w\rightarrow M$ be the evaluation map of the $j$-th marked point, where $j=1,\ldots,k$. Then on each $V_w$,
  \[(\ev_j^{-1}(Y)\cap V_w)\setminus \left(U_{Y,w,tan}\cup\bigcup_{\substack{T\text{ more than}\\\text{one node}}}U_{w,T}\right)\]
  is a submanifold of $V_w$ with real codimension at least $2$. We denote the collection of these submanifolds as $\{U_{Y,w,j}\}_j$.
\end{proposition}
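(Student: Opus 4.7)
The plan is to show that the evaluation map $\ev_j: V_w \to M$ is transverse to $Y$ on the relevant open subset; since $Y$ has real codimension $2$ in $M$, the preimage is then automatically a smooth submanifold of codimension $2$ in $V_w$.

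First I would observe that, by Remark~\ref{RMK:5.23}, the complement $V_w \setminus \bigcup_T U_{w,T}$ (over $k$-labeled trees $T$ with more than one node) is exactly the open set $V_{w;o}$, and by Proposition~\ref{Prop:5.27}, $U_{Y,w,tan}$ (interpreted here with $\mathfrak{z} = z_j$) is a closed submanifold of $V_{w;o}$. Hence the set in the proposition is precisely $\ev_j^{-1}(Y)$ intersected with the open subset $V_{w;o} \setminus U_{Y,w,tan} \subset V_w$, and it suffices to check transversality of $\ev_j$ with $Y$ at every point of this open subset lying in $\ev_j^{-1}(Y)$.

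For the transversality, let $v \in V_{w;o} \setminus U_{Y,w,tan}$ with $v(z_j) \in Y$, so that $\Sigma_v \cong S^2$. I would produce the required tangent vectors in $T_v V_w$ via infinitesimal Möbius reparametrizations of $\Sigma_v$: given any $\delta \in T_{z_j} S^2 \cong \mathbb{C}$, pick a holomorphic vector field $X$ on $S^2$ with $X(z_j) = \delta$ (possible since the $3$-complex-dimensional space of holomorphic vector fields on $S^2$ surjects onto any single tangent space $T_{z_j} S^2$). Set $\xi := dv \cdot X$. Differentiating the identity $\dbar_{J_0}(v \circ \phi_t) \equiv 0$ at $t = 0$, where $\phi_t$ is the local holomorphic flow of $X$, gives $D_v \dbar_{J_0} \xi = 0$, so $\xi \in \ker D_v \dbar_{J_0} \subset T_v V_{w,map} \subset T_v V_w$. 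Moreover $d\ev_j(\xi) = \xi(z_j) = dv(z_j)\,\delta$. The assumption $v \notin U_{Y,w,tan}$ is precisely that $\pi_{TM/TY} \circ dv(z_j): T_{z_j} S^2 \to T_{v(z_j)}M / T_{v(z_j)}Y \cong \mathbb{C}$ is nonzero; since both source and target are complex one-dimensional, it is surjective. Letting $\delta$ vary, $\pi_{TM/TY} \circ d\ev_j$ surjects onto $\mathbb{C}$, establishing $\ev_j \pitchfork Y$ at $v$.

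The main subtlety I anticipate is justifying that the Möbius-type variations $dv \cdot X$ are genuine tangent vectors to $V_w$ rather than reparametrization directions that should be quotiented out. This works cleanly here because $V_w$ is a finite-dimensional parametrizing manifold (not a moduli quotient), and the computation above places $\xi$ in $\ker(\pi_{J_0,v} \circ D_v \dbar_{J_0}) = T_v V_{w,map}$ directly. Once transversality is in hand, the implicit function theorem together with $\codim_M Y = 2$ yields that $\ev_j^{-1}(Y) \cap (V_{w;o} \setminus U_{Y,w,tan})$ is a smooth submanifold of real codimension $2$ in $V_w$, which is the claim for $U_{Y,w,j}$.
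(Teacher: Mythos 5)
Your reduction to checking transversality of $\ev_j$ to $Y$ on the open set $V_{w;o}\setminus U_{Y,w,tan}$ is the right skeleton, and using variations that move the marked point $z_j$ is also the right mechanism; but the specific tangent vectors you produce are not shown to lie in $T_vV_w$. The step ``$D_v\dbar_{J_0}\xi=0$'' is obtained by ``differentiating the identity $\dbar_{J_0}(v\circ\phi_t)\equiv 0$.'' That identity is false for a general $v\in V_w$: a point of the Kuranishi chart is \emph{not} a $J_0$-holomorphic map, it only satisfies $\dbar_{J_0}v=\eta$ for some (generically nonzero) $\eta\in E_{J_0,v}$. The correct identity is $\dbar_{J_0}(v\circ\phi_t)=\phi_t^{*}\eta$, and differentiating gives $D_v\dbar_{J_0}(dv\cdot X)=L_X\eta$ (modulo the connection used to define the vertical derivative). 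There is no reason for $L_X\eta$ to vanish, nor even to lie in $E_{J_0,v}$, since the obstruction subspace $E_{J_0,v}$ is not invariant under arbitrary Möbius pullbacks. So $\xi=dv\cdot X$ is not established to lie in $\ker D_v\dbar_{J_0}$, nor in $T_vV_{w,map}=\ker(\pi_{J_0,v}\circ D_v\dbar_{J_0})$; your ``main subtlety'' paragraph dismisses exactly this worry on the strength of the flawed computation.

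The surrounding lemmas in the paper show the shape of what a complete argument should look like: Lemma~\ref{lemma:5.25} and Lemma~\ref{lemma:5.26} both start with a naive candidate section achieving the desired jet and then subtract a correction $\widetilde\xi'\in B_0^{r,p}$ found via surjectivity of $\pi_{J_0,v}\circ D_v\dbar_{J_0}$ on the constrained domain, so that the difference lands in $T_vV_w$ while preserving the relevant jet data. To repair your argument you would do the same thing: take $\xi_0=dv\cdot X$, then subtract a correction $\widetilde\xi'$ with $\widetilde\xi'(z_j)=0$ and $\pi_{J_0,v}\circ D_v\dbar_{J_0}\widetilde\xi'=\pi_{J_0,v}\circ D_v\dbar_{J_0}\xi_0$; this needs a surjectivity statement analogous to Lemma~\ref{lemma:5.25}, but with the constraint ``$\xi(z_j)=0$'' rather than ``$d\xi(\mathfrak{z})=0$.'' (Alternatively, for deformable marked points one can bypass reparametrization entirely and use the $V_{w,\mathrm{deform}}$ directions, which is what the paper's terse ``therefore $\ev_j$ is transverse'' is really appealing to.)
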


\begin{proof}
  Any map $\hat{w}$ in
  \[V_w\setminus \left(U_{Y,w,tan}\cup\bigcup_{\substack{T\text{ more than}\\\text{one node}}}U_{w,T}\right)\]
  has only one node and is transverse to $Y$. Therefore $\ev_j$ is transverse to $Y$ at $\hat{w}$. This completes the proof.
\end{proof}

Recall that, the Kuranishi structure
\[\{(\widetilde{V}_u,\widetilde{E}_{J_0,u,\epsilon}\times \mathbb{C}^\ell,\Gamma_u,\widetilde{\psi}_u,\widetilde{s}_u\times \ev_{\mathbb{C},u})\}_{u\in\overline{\mathcal{M}}_{0,k+\ell}(J_0,A,Y)}\]
defined by Proposition \ref{prop:5.23} and Theorem \ref{Thm:5.12} depends on $\epsilon>0$, so that
\[\prod_{i=k+1}^{k+\ell}\ev_i(\widetilde{V}_u)\subset N_{Y,\epsilon}^\ell,\]
where $N_{Y,\epsilon}$ is the $\epsilon$-tubular neighborhood of $Y$ in $M$.

\begin{lemma}\label{lemma:5.25}
  For $w\in \overline{\mathcal{M}}_{0,k}(J_0,A)$ with the Kuranishi structure
  \[\{(V_w,E_{J_0,w,\epsilon},\Gamma_w,\psi_w,s_w)\}_{w\in\overline{\mathcal{M}}_{0,k}(J_0,A)},\]
  assume that $\varphi_\ell(u)=w$ where $u\in\overline{\mathcal{M}}_{0,k+\ell}(J_0,A,Y)$. Denote by
  \begin{align*}
    V_{w,\epsilon}^{\bot}\coloneqq V_w\setminus\left(\bigcup_{\substack{T\text{ with more}\\\text{than one}\\\text{ node }}}U_{w,T}\cup U_{Y,w,tan}\cup \bigcup_{i=1}^k\ev_i^{-1}(N_{Y,\epsilon}) \right),
  \end{align*}
  and
  \[\widetilde{V}_{u,\epsilon}^{\bot}\coloneqq \varphi_\ell^{-1}(V_{w,\epsilon}^{\bot})\subset \widetilde{V}_u.\]
  Then $\varphi_\ell:\widetilde{V}_{u,\epsilon}^{\bot}\to V_{w,\epsilon}^{\bot}$ is a submersion.
\end{lemma}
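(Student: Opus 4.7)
The plan is to verify that $d\varphi_\ell$ is surjective at every point of $\widetilde{V}_{u,\epsilon}^{\bot}$, exploiting the benign geometry enforced by the three exclusions in the definition of $V_{w,\epsilon}^{\bot}$. First I would unpack what the restricted target guarantees: for any $v_0\in V_{w,\epsilon}^{\bot}$, the underlying curve $\Sigma_{v_0}$ is a smooth $S^2$ (because $v_0$ avoids every $U_{w,T}$ with $T$ having more than one node), the map $v_0$ meets $Y$ transversely in exactly $\ell$ distinct points (by the exclusion of $U_{Y,w,tan}$ together with positivity of intersection), and none of these $\ell$ intersection points coincides with any of the first $k$ marked points of $v_0$ (by the exclusion of $\bigcup_{i=1}^k \ev_i^{-1}(N_{Y,\epsilon})$). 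Consequently, for $v\in\widetilde{V}_{u,\epsilon}^{\bot}$ with $\varphi_\ell(v)=v_0$ the underlying curve $\Sigma_v$ is obtained from $\Sigma_{v_0}\cong S^2$ by attaching at each of the $\ell$ intersection points the (possibly trivial) ghost tree dictated by the reference $\Sigma_u$ via the construction in Proposition \ref{Prop:5.25}, with $v$ agreeing with $v_0$ on the main component.

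Next I would split $d\varphi_\ell$ using the product decompositions $\widetilde{V}_u=\widetilde{V}_{u,map}\times\widetilde{V}_{u,resolve}\times\widetilde{V}_{u,deform}$ and $V_w=V_{w,map}\times V_{w,resolve}\times V_{w,deform}$ from the proof of Proposition \ref{Prop:5.26}. On the map factor, Remark \ref{RMK:5.18} identifies $\widetilde{E}_{J_0,u,\epsilon}$ with $E_{J_0,w,\epsilon}$ on the main component and forces both to vanish on any ghost parts (which sit inside $u^{-1}(N_{Y,\epsilon})$), so the very argument of Proposition \ref{Prop:5.26} already yields that $d\varphi_\ell$ restricted to $\widetilde{V}_{u,map}$ surjects onto $V_{w,map}$. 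On the resolve-plus-deform factors the induced map is essentially the forgetful morphism $\underline{\varphi}_\ell$ of Deligne--Mumford spaces, which is classically a submersion on the locus where the curves being forgotten are generic. Given any tangent vector $\dot{v}_0=(\dot{w},\dot{\mathfrak{a}},\dot{\vartheta})\in T_{v_0}V_w$, the lift is then explicit: take the identical map variation on the main component, pull back the resolution parameters at nodes of $\Sigma_{v_0}$ via $\underline{\varphi}_\ell^{*}$, lift the deformations of the first $k$ marked points verbatim, and hold the last $\ell$ marked points fixed.

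The main technical point, which I expect to be the only real obstacle, is the book-keeping when $\Sigma_u$ itself carries ghost bubbles attached to the main sphere at one of the $\ell$ intersection points with $Y$. The resolution parameters $\mathfrak{a}_{z_{\alpha\beta}}\in T_{z_{\alpha\beta}}S_\alpha\otimes T_{z_{\beta\alpha}}S_\beta$ at such ghost-to-main nodes live in $\widetilde{V}_{u,resolve}$ but correspond to no node of $\Sigma_{v_0}$; I would show they lie in $\ker d\varphi_\ell$ because resolving such a node simply absorbs the ghost bubble back into the main component, and the resulting forgetful image differs from $v_0$ only by a deformation of a marked point position already accounted for inside $V_{w,deform}$. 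Together with the $2\ell$-dimensional family of variations of the last $\ell$ marked points on $S^2$, which are unconstrained in $\widetilde{V}_u$ and obviously collapsed by $\varphi_\ell$, these directions span the expected $2\ell$-dimensional kernel of $d\varphi_\ell$, so that $d\varphi_\ell$ is surjective as claimed, completing the verification that $\varphi_\ell\colon\widetilde{V}_{u,\epsilon}^{\bot}\to V_{w,\epsilon}^{\bot}$ is a submersion.
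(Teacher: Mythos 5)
Your proof is essentially correct and reaches the same conclusion, but it takes a more explicit route than the paper and fills in a case that the paper glosses over. The paper's argument is terse: every $\hat w\in V_{w,\epsilon}^{\bot}$ is an irreducible curve meeting $Y$ transversely with all $k$ marked points a distance $\epsilon$ from $Y$, so (the paper asserts) every preimage under $\varphi_\ell$ is obtained simply by adding $\ell$ distinct marked points, hence is again irreducible, and on this locus $\varphi_\ell$ is just the forgetful submersion of $\ell$ marked points on $S^2$. You instead split $d\varphi_\ell$ along the $V_{\cdot,map}\times V_{\cdot,resolve}\times V_{\cdot,deform}$ decomposition, cite Proposition~\ref{Prop:5.26} for the map factor and the Deligne--Mumford forgetful map for the curve factor, and then treat separately the possibility that $\Sigma_u$ (and hence some $v\in\widetilde V_u$ nearby) still carries a ghost bubble collapsed by $\underline\varphi_\ell$. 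That possibility is a real one when $u$ lies in a lower stratum, and the paper's clean assertion that ``$\varphi_\ell^{-1}(\hat w)$ consists of maps with only one node'' does not obviously cover it, so your extra care is a genuine improvement. Two small points worth tightening: (i) your sentence that ``the resulting forgetful image differs from $v_0$ only by a deformation of a marked point position already accounted for inside $V_{w,deform}$'' undermines the claim that those resolve parameters lie in $\ker d\varphi_\ell$---what you actually want to say is that absorbing such a ghost bubble only relocates marked points \emph{among the last $\ell$}, which are then forgotten, so $\hat w$ does not change at all; and (ii) it is worth observing that the exclusion of $\bigcup_{i=1}^k\ev_i^{-1}(N_{Y,\epsilon})$ is precisely what rules out ghost bubbles carrying any of the first $k$ marked points, since contracting such a bubble would place that marked point's image in $N_{Y,\epsilon}$, so the only ghost trees you need to worry about are those carrying exclusively last-$\ell$ marked points.
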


\begin{proof}
  Every map $\hat{w}\in V_{w,\epsilon}^{\bot}$ has the following properties:
  \begin{enumerate}
    \item The underlying curve $\Sigma_{\hat{w}}$ of $\hat{w}$ has only one node;
    \item All $k$ marked points of $\hat{w}$ are away from $Y$ with a positive distance $\epsilon$;
    \item $\hat{w}$ intersects $Y$ transversely.
  \end{enumerate}
  Hence a preimage in $\varphi_\ell^{-1}(\hat{w})$ is obtained by just adding distinct $\ell$ marked points to $\Sigma_{\hat{w}}$, and the new marked points are away from the original $k$ marked points as well. As a consequence, $\varphi_\ell^{-1}(\hat{w})$ consists of maps with only one node. For $v\in\widetilde{V}_u$ and $v\in\varphi_\ell^{-1}(\hat{w})$, since sufficiently small neighborhoods of one node maps $v$ and $\hat{w}$ in $\widetilde{V}_u$ and $V_w$ are diffeomorphic via $\varphi_\ell$, $\hat{w}$ is a regular value of the forgetful map $\varphi_\ell$.
\end{proof}

\subsubsection{Compare virtual fundamental classes on $\overline{\mathcal{M}}_{0,k+\ell}(J_0,A,Y)$ and $\overline{\mathcal{M}}_{0,k}(J_0,A)$}

\begin{remark}\label{RMK:5.29}
  Let $X$ be a finite dimensional compact (possibly with boundary) smooth manifold, $Y\subset X$ a smooth submanifold, and $E\times X\rightarrow X$ a finite dimensional vector bundle with the vector space $E$ as its fiber at each point of $X$. Denote by $\iota_Y:C^\infty(X,E)\rightarrow C^\infty(Y,E)$ the restriction map. Let $B\subset C^\infty(Y,E)$ be an open dense subset, then we claim that {\bf $\iota_Y^{-1}(B)$ is an open dense subset of $C^\infty(X,E)$}.

  Since $\iota_Y$ is continuous, it is clear that $\iota_Y^{-1}(B)$ is open if $B$ is open.

  Take a metric $\mathfrak{g}(\cdot,\cdot)$ on $X$, two tubular neighborhoods $N_XY^{'}\subsetneqq N_XY$ of $Y$, and a finite collection of open sets $\{O_i\subset X\}$ covering $N_XY$. Moreover, on each $O_i$, there is a trivialization $({\bf y}^i, x^i)$ where ${\bf y}^i$ parametrizes $O_i\cap Y$, and $x^i$ is the normal direction with respect to the metric $\mathfrak{g}$. We also take a finite set of smooth functions $\{\chi_i\in C^\infty_0(O_i,[0,1])\}$, such that $\chi_i|_{N_XY^{'}}$ are partition of unity of $N_XY^{'}$.

  For each $f\in C^\infty(X,E)$, denote its restriction to $Y$ by $f|_Y\in C^\infty(Y,E)$. For any $\hat{h}\in C^\infty(Y,E)$ in a neighborhood of $f|_Y$, we extend $\hat{h}$ to an $h\in C^\infty(X,E)$ in the following way,
  \begin{enumerate}
    \item $h_i({\bf y}^i,x^i)\coloneqq \hat{h}({\bf y}^i)-f|_Y({\bf y}^i)+f({\bf y}^i,x^i)$ on $O_i$;
    \item Take $h\coloneqq \sum_i(\chi_ih_i+(1-\chi_i)f)$.
  \end{enumerate}
  Denote this extension by $\mu_{Y;f}:\hat{h}\mapsto h$. We have
  \begin{enumerate}
    \item $h|_Y=\hat{h}$;
    \item $\|\partial_\alpha (h-f)({\bf y^i},x^i)\|\leq \sum_{|\beta|\leq |\alpha|}c_{\beta,\chi}\|\partial_\beta(\hat{h}-\hat{f})({\bf y}^i)\|$ for each $({\bf y}^i,x^i)\in O_i$ and each $O_i$.
  \end{enumerate}
  Here $\alpha$ are multi-indices with respect to the parameters $({\bf y}^i,x^i)$, $\beta$ are multi-indices with respect to ${\bf y}^i$, $c_{\beta,\chi}$ are constants depending on $\beta$ and $\{\chi_i\}$. Notice that
  \[h-f\equiv 0,\text{ on }X\setminus \cup_i O_i.\]
  We conclude that the extension map $\mu_{Y;f}:C^\infty(Y,E)\rightarrow C^\infty(X,E)$ is continuous.

  Assume that $B\subset C^\infty(Y,E)$ is dense. Then for any open neighborhood $B_f\subset C^\infty(X,E)$ around $f$, $\mu_{Y;f}^{-1}(B_f)$ is open in $C^\infty(Y,E)$ and contains $f|_Y$. Therefore there is a $\hat{h}\in \mu_{Y;f}^{-1}(B_f)\cap B$ such that $\mu_{Y;f}(\hat{h})\in B_f\cap \iota_Y^{-1}(B)$. Hence $\iota_Y^{-1}(B)$ is dense in $C^\infty(X,E)$. This completes the proof.
\end{remark}

Recall the definitions of $U_{w,T}$, $U_{Y,w,j}$ and $U_{Y,w,tan}$ in Remark \ref{RMK:5.23} and Proposition \ref{Prop:5.24}, \ref{Prop:5.27}.
\begin{corollary}\label{Coro:5.30}
  On the Kuranishi structure
  \[\{(V_w,E_{J_0,w,\epsilon},\Gamma_w,\psi_w,s_w)\}_{w\in\overline{\mathcal{M}}_{0,k}(J_0,A)},\]
  there exists a multisection $h=\{h_w\}$, such that $h$ is transverse to $0$, and $h_w$ restricted to $U_{w,T}$, $U_{Y,w,j}$ and $U_{Y,w,tan}$ is transverse to $0$ as well.
\end{corollary}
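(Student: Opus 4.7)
The plan is to redo the inductive construction of a transverse multisection from Lemma \ref{Lemma:5.4}, but at each inductive step impose the additional transversality along the finitely many specified submanifolds. Since $\overline{\mathcal{M}}_{0,k}(J_0,A)$ is compact, one can choose a finite cover $\{V_{w_i}\}_{i=1}^{\mathfrak{l}}$ as in Lemma \ref{Lemma:5.4}, and within each chart $V_{w_i}$ only finitely many of the submanifolds $U_{w_i,T}$, $U_{Y,w_i,j}$, $U_{Y,w_i,tan}$ can be nonempty (finitely many $\ell$-stable $k$-labeled trees $T$, and $j=1,\ldots,k$). The task therefore reduces to simultaneously achieving transversality on $V_{w_i}$ itself and on each of finitely many smooth submanifolds inside it.

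The key technical input is Remark \ref{RMK:5.29}. For each smooth submanifold $U\subset V_{w_i}$ among the specified ones, Lemma 3.14 of \cite{FK} applied directly to $U$ with the restricted obstruction bundle shows that the set of multisections on $U$ transverse to $0$ is open and dense in the $C^\infty$ topology. Remark \ref{RMK:5.29} then converts this into an open dense subset of $C^\infty(V_{w_i},E_{J_0,w_i,\epsilon})$ via preimage under the restriction map $\iota_U$. Intersecting these finitely many open dense sets with the open dense set of multisections transverse on $V_{w_i}$ itself (again by Lemma 3.14 of \cite{FK}) yields an open dense subset of multisections on $V_{w_i}$ which are simultaneously transverse to $0$ on $V_{w_i}$ and on every $U_{w_i,T}$, $U_{Y,w_i,j}$, $U_{Y,w_i,tan}$.

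With this joint density statement, I would carry out the induction of Lemma \ref{Lemma:5.4} verbatim: at step $j+1$, first extend each previously constructed $h_{w_i}$ for $i\leq j$ to a tubular neighborhood of $V_{w_i}\cap V_{w_{j+1}}$ inside $V_{w_{j+1}}$ via the formula $h_{w_{j+1}w_i}(v)=\Phi_{w_{j+1}w_i}(\rho)\oplus h_{w_i}(\tau)$, and then pick $h_{w_{j+1}}$ in the joint open dense set above, close enough to $s_{w_{j+1}}$ in the $C^\infty$ topology and agreeing with the already-constructed extensions on the chosen tubular neighborhoods. The step I expect to be the main obstacle is verifying compatibility of the transversality on the submanifolds across charts, i.e.\ that the strata $U_{w,T}$, $U_{Y,w,j}$, $U_{Y,w,tan}$ are preserved by the coordinate changes $\Phi_{w_iw_j}$ of the Kuranishi structure, so that transversality achieved on $U_{w_i,\bullet}$ in one chart remains transversality on $U_{w_{j+1},\bullet}$ after identification on overlaps. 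Fortunately all three kinds of submanifolds are defined intrinsically, via the dual graph of the underlying curve, the preimages of $Y$ under the evaluation maps, and the normal $1$-jet to $Y$, respectively, which are geometric data respected by the coordinate changes; so the induction hypothesis propagates along overlaps without further modification.
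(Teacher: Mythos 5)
Your proposal is essentially the same as the paper's own proof. Both rely on Remark \ref{RMK:5.29} to convert the openness and density of transverse sections on each submanifold $U_{w_i,T}$, $U_{Y,w_i,j}$, $U_{Y,w_i,\mathrm{tan}}$ (via Lemma 3.14 of \cite{FK}) into open dense subsets of $C^\infty(V_{w_i},\mathcal{E}_{J_0,w_i,\epsilon})$ by pulling back under the restriction maps $\iota_U$, and then intersect these finitely many open dense subsets with the open dense set $\mathcal{B}$ from which the branches of a transverse multisection are chosen in the inductive construction (\cite[Thm.~6.4]{FK}, phrased in the paper via Lemma \ref{Lemma:5.4}). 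You spell out the inductive step and the cross-chart compatibility of the strata a bit more explicitly than the paper does, but the decomposition, the key lemma, and the logic are the same.
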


\begin{proof}
  Recall from Theorem 6.4 in \cite{FK} the construction of $h=\{h_w\}$ that makes $h$ transverse to $0$. We take a good coordinate system $(P,((V_{w_i},\psi_{w_i},s_{w_i}):w_i\in P),\phi_{w_iw_j},\hat{\phi}_{w_iw_j})$ as \cite[Def.~6.1]{FK}, where $j\leq i$ if and only if $\mathrm{rank}E_{J_0,w_j}\leq \mathrm{rank}E_{J_0,w_i}$. On $V_{w_i}$, each branch of $h_w$ is taken from an open dense subset $\mathcal{B}\subset C^\infty(V_{w_i},\mathcal{E}_{J_0,w_i})$.

  On the other hand, there exist open dense subsets $\mathcal{B}_{w_i,T}\subset C^\infty(U_{w_i,T}, \mathcal{E}_{J_0,w_i})$, $\mathcal{B}_{Y,w_i,j}\subset C^\infty(U_{Y,w_i,j}, \mathcal{E}_{J_0,w_i})$ and $\mathcal{B}_{Y,w_i,tan}\subset C^\infty(U_{Y,w_i,tan}, \mathcal{E}_{J_0,w_i})$, such that sections in $\mathcal{B}_{w_i,T}$, $\mathcal{B}_{Y,w_i,j}$ and $\mathcal{B}_{Y,w_i,tan}$ are transverse to $0$.

  For a submanifold $U\subset V_{w_i}$, denote by $\iota_U:C^\infty(V_{w_i},\mathcal{E}_{J_0,w_i})\rightarrow C^\infty(U,\mathcal{E}_{J_0,w_i})$ the restriction map. Then according to Remark \ref{RMK:5.29},
  \[\bigcap_{i,T} \iota_{U_{w_i,T}}^{-1}(\mathcal{B}_{w_i,T}),\ \bigcap_{i,j} \iota_{U_{Y,w_i,j}}^{-1}(\mathcal{B}_{Y,w_i,j}),\ \bigcap_i \iota_{U_{Y,w_i,tan}}^{-1}(\mathcal{B}_{Y,w_i,tan})\]
  are all open dense subsets of $C^\infty(V_{w_i},\mathcal{E}_{J_0,w_i})$. By taking $h_{w_i}$ from the intersection of the above open dense subsets and $\mathcal{B}$, we complete the proof.
\end{proof}

\begin{theorem}\label{Thm:5.22}
  Let $(M,\omega)$ be a $2n$ dimensional smooth symplectic manifold, and $J$ an $\omega$-compatible almost complex structure, $[\omega]\in H^2(M;\mathbb{Z})$, and $A\in H_2(M;\mathbb{Z})$. $(J,Y)$ is a Donaldson pair of sufficiently large degree $D$ and $\ell=D\omega(A)\geq 3$. Take a domain independent
  \[J_0\in B^{*}\subset \mathcal{J}_{\ell+1}^{*}(M,Y;J,\theta_1),\]
  where $B^{*}$ and $\mathcal{J}_{\ell+1}^{*}(M,Y;J,\theta_1)$ are defined in Definition \ref{Def:3.20}. Take $\epsilon>0$, then the virtual fundamental class provided by ${\bf ev}_k=\prod_{i=1}^{k}\ev_i$ with the Kuranishi structure
  \[\{(\widetilde{V}_u,\widetilde{E}_{J_0,u,\epsilon}\times \mathbb{C}^\ell,\Gamma_u,\widetilde{\psi}_{u},\widetilde{s}_u\times \ev_{\mathbb{C},u})\}_{u\in \overline{\mathcal{M}}_{0,k+\ell}(J_0,A,Y)}\]
  is $\ell!$ times the virtual fundamental class given by ${\bf ev}_k$ with
  \[\{(V_w,E_{J_0,w,\epsilon},\Gamma_w,\psi_w,s_w)\}_{w\in\overline{\mathcal{M}}_{0,k}(J_0,A)}.\]
\end{theorem}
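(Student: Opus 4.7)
The plan is to produce a transverse multisection on the Kuranishi structure over $\overline{\mathcal{M}}_{0,k+\ell}(J_0,A,Y)$ as the pullback of a well-chosen multisection on $\overline{\mathcal{M}}_{0,k}(J_0,A)$, and then exploit the fact that the forgetful map $\varphi_\ell$ is generically $\ell!$-to-one on the relevant zero locus. First I would take $h=\{h_w\}$ as in Corollary \ref{Coro:5.30}, so that $h$ is transverse to $0$ on each $V_w$ and its restriction to each of the submanifolds $U_{w,T}$, $U_{Y,w,j}$ and $U_{Y,w,tan}$ is also transverse to $0$. Since these submanifolds have real codimension $\geq 2$ in $V_w$ (Remark \ref{RMK:5.23}, Propositions \ref{Prop:5.24}, \ref{Prop:5.27}), a standard dimension count implies that, up to a chain of codimension $\geq 2$, $h^{-1}(0)$ sits inside
\[
V_{w,\epsilon}^{\bot}= V_w\setminus\Bigl(\bigcup_{T\text{ nontrivial}} U_{w,T}\;\cup\; U_{Y,w,tan}\;\cup\;\bigcup_{i=1}^{k}\ev_i^{-1}(N_{Y,\epsilon})\Bigr).
\]
By the relative-homology argument of Theorem \ref{Thm:5.14} (combined with Zinger's Theorem \ref{Thm:3.3.1}), the pushforward ${\bf ev}_{k*}(h^{-1}(0))$ in $H_{*}(M^k;\mathbb{Q})$ is therefore represented by the portion of the zero cycle supported in $V_{w,\epsilon}^{\bot}$.

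For $u\in \overline{\mathcal{M}}_{0,k+\ell}(J_0,A,Y)$ with $w=\varphi_\ell(u)$, I form the pullback multisection $\widetilde{h}_u\coloneqq \varphi_{\ell,u}^{*}(h_w)$ of Definition \ref{Defi:5.21} and consider $\widetilde{h}_u\times \ev_{\mathbb{C},u}$ on the Kuranishi neighborhood $(\widetilde{V}_u,\widetilde{\mathcal{E}}_{J_0,u,\epsilon}\times\mathbb{C}^\ell)$. On $\widetilde{V}_{u,\epsilon}^{\bot}\coloneqq \varphi_\ell^{-1}(V_{w,\epsilon}^{\bot})$, Lemma \ref{lemma:5.25} (the submersion lemma) gives that $\varphi_\ell$ is a submersion, so $\widetilde{h}_u$ inherits transversality to $0$ there. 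For the $\ev_{\mathbb{C},u}$ factor, moving the $i$-th of the last $\ell$ marked points along the domain curve of $v$ preserves $\varphi_\ell(v)$ but moves $\ev_i(v)$ along the image; since every $v\in \widetilde{V}_{u,\epsilon}^{\bot}$ projects to a map that intersects $Y$ transversely at that point, the composition with the projection $\pi_{TM/TY}$ surjects onto $\mathbb{C}$. Combining the two transversalities shows that $(\widetilde{h}_u\times \ev_{\mathbb{C},u})^{-1}(0)\cap \widetilde{V}_{u,\epsilon}^{\bot}$ is a smooth submanifold of the expected dimension $\dim\widetilde{V}_u-\mathrm{rank}(\widetilde{E}_{J_0,u,\epsilon})-2\ell$, which agrees with the virtual dimension of $\overline{\mathcal{M}}_{0,k}(J_0,A)$.

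Now comes the $\ell!$ counting. For each $\hat{w}\in h_w^{-1}(0)\cap V_{w,\epsilon}^{\bot}$, the underlying curve of $\hat{w}$ is irreducible, $\hat{w}^{-1}(Y)$ consists of exactly $\ell$ distinct points (using $\ell=D\omega(A)$, positivity of intersections, and the defining properties of $V_{w,\epsilon}^{\bot}$), and none of these coincide with the first $k$ marked points. Hence the fiber of $\varphi_\ell$ over $\hat{w}$, intersected with $\ev_{\mathbb{C},u}^{-1}(0)$, consists precisely of the $\ell!$ ordered placements of the $\ell$ preimages as the marked points $z_{k+1},\ldots,z_{k+\ell}$. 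All $\ell!$ of these points have identical image in $M^k$ under ${\bf ev}_k$, since the first $k$ marked points and the underlying $J_0$-holomorphic map are unchanged by $\varphi_\ell$; and all carry the same sign in the multiplicity formula (\ref{eq:1}) with the isomorphism (\ref{eq:4.1.1}), because $Y$ is $J_0$-complex so the local normal intersection factor in $\mathbb{C}^\ell$ is orientation-preserving at each of the $\ell$ preimages. Pushing forward via ${\bf ev}_k$ therefore multiplies the cycle class by exactly $\ell!$, which is the desired identity.

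The main obstacle I anticipate is controlling the complement $\widetilde{V}_u\setminus\widetilde{V}_{u,\epsilon}^{\bot}$: on the loci where the domain has more than one node, where $\hat{w}$ is tangent to $Y$, or where one of the first $k$ marked points approaches $Y$, the pullback $\widetilde{h}_u$ need not be transverse, and the fiber counting can fail (e.g.\ a double intersection contributes $\binom{\ell}{2}\cdot(\ell-2)!$ orderings rather than $\ell!$ distinct ones). The way around this is to confirm, using Corollary \ref{Coro:5.30} and the codimension-$\geq 2$ statements of Remark \ref{RMK:5.23} and Propositions \ref{Prop:5.24}, \ref{Prop:5.27}, that both sides of the equality can be represented by cycles supported over $V_{w,\epsilon}^{\bot}$ (respectively $\widetilde{V}_{u,\epsilon}^{\bot}$), so that the bad locus contributes $0$ in the relevant relative homology $H_{*}(M^k,O;\mathbb{Q})$ used in Theorem \ref{Thm:5.14}. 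Once this is checked, the equality of cycles obtained above upgrades to the desired equality of virtual fundamental classes.
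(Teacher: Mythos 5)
Your proposal is correct and follows essentially the same route as the paper's proof: pull back a multisection chosen via Corollary \ref{Coro:5.30}, use the submersion property of $\varphi_\ell$ over $V_{w,\epsilon}^{\bot}$ for transversality, count $\ell!$ fibers with matching orientation because $Y$ is $J_0$-holomorphic, and dispose of the codimension-$\geq 2$ bad locus via Zinger's relative-homology argument. The one step you leave implicit is the paper's invocation of Lemma \ref{Lemma:5.4} to produce a \emph{globally} transverse multisection $\mwh$ that agrees with $\widetilde{h}_u\times\ev_{\mathbb{C},u}$ on a compact set outside ${\bf ev}_k^{-1}(U_1)$, which is what makes your phrase ``both sides can be represented by cycles supported over $V_{w,\epsilon}^{\bot}$'' rigorous.
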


\begin{proof}
  \textbf{Step 1. \textit{Settings.}} Fix $\epsilon>0$. Consider the Kuranishi structure
  \[\{(V_w,E_{J_0,w,\epsilon},\Gamma_w,\psi_w,s_w)\}_{w\in\overline{\mathcal{M}}_{0,k}(J_0,A)}.\]
  Take a good coordinate system $(P,((V_{w_i},\psi_{w_i},s_{w_i}):w_i\in P),\phi_{w_iw_j},\hat{\phi}_{w_iw_j})$, where $j\leq i$ if and only if $\mathrm{rank}E_{J_0,w_j}\leq \mathrm{rank}E_{J_0,w_i}$. Let $h=\{h_{w_i}\}$ be a generic multisection of the obstruction bundle $\mathcal{E}_{J,w_i,\epsilon}\rightarrow V_{w_i}$ whose fiber at $\hat{w}\in V_{w_i}$ is the vector space $\Par_{w_i,\hat{w}}^{hol}E_{J_0,w_i,\epsilon}$. Moreover, for each $k$-labeled tree $T$, $w_i$ and $1\leq j\leq k$, we require that $h_{w_i}$ restricted to $U_{w_i,T}$, $U_{Y,w_i,j}$ and $U_{Y,w_i,tan}$ is transverse to $0$ as well (see definitions of $U_{w_i,T}$, $U_{Y,w_i,j}$ and $U_{Y,w_i,tan}$ in Remark \ref{RMK:5.23}, Proposition \ref{Prop:5.24}, \ref{Prop:5.27}). This is possible according to Corollary \ref{Coro:5.30}.

  \textbf{Step 2. \textit{Take neighborhoods covering the images of "bad" parts.}} For any $V_{w_i}$, assume that each branch of $h_{w_i}^{-1}(0)$ is a smooth submanifold of dimension $d=\dim V_{w_i}-\mathrm{rank}E_{J_0,w_i,\epsilon}$. Since $U_{w_i,T}$, $U_{Y,w_i,j}$ and $U_{Y,w_i,tan}$ have codimension at least $2$, the zero locus of $h_{w_i}$ restricted to $U_{w_i,T}$, $U_{Y,w_i,j}$ and $U_{Y,w_i,tan}$ consists of finitely many smooth submanifolds of dimension at most $d-2$. By Proposition \ref{Prop:3.3.1}, there exists an open neighborhood $U$ in $M^k$ such that
  \[\bigcup_i {\bf ev}_k\left(h_{w_i}^{-1}(0)\cap \left( U_{w_i,T}\cup \bigcup_{j=1}^k U_{Y,w_i,j}\cup U_{Y,w_i,tan}\right)\right)\subset U\]
  and
  \[H_l(U)=0,\quad\text{if }l>d-2.\]
  We fix such an open neighborhood $U$. For each $i$, the set ${\bf ev}_k^{-1}(U)$ is an open neighborhood of $(h_{w_i}^{-1}(0)\cap\bigcup_{j=1}^k U_{Y,w_i,j})$ in $V_{w_i}$. Since the entire $h^{-1}(0)$ is a compact set, then image ${\bf ev}_k(h^{-1}(0)\setminus {\bf ev}_k^{-1}(U))$ is a compact set disjoint from the $k$-product of the Donaldson hypersurface $Y$. Thus there exists a constant $\epsilon_1>0$ such that for any $i$, any $\hat{w}\in h_{w_i}^{-1}(0)\setminus {\bf ev}_k^{-1}(U)$ and any marked point $z_j$ of $\hat{w}$, we have that $\hat{w}(z_j)$ lies at distance at least $2\epsilon_1>0$ from the Donaldson hypersurface $Y$.

  For the subsequent steps, we need a smaller open neighborhood $U_1\subset U$ with similar properties to $U$. Note that if $d>\dim M^k$, then $H_d(M^k)=0$ and the theorem is trivial. If $d\leq \dim M^k$, we first notice that, by the definitions of $U_{w_i,T}$, $U_{Y,w_i,j}$ and $U_{Y,w_i,tan}$, the loci $h^{-1}(0)$ intersecting these sets are precisely the preimages of $Y$ and $0$ within $h^{-1}(0)$ under certain continuous maps (see Remark \ref{RMK:5.23}, Proposition \ref{Prop:5.24} and Proposition \ref{Prop:5.27}). Consequently, the intersections $(h^{-1}(0)\cap \bigcup_i U_{w_i,T})$, $(h^{-1}(0)\cap \bigcup_{i,j} U_{Y,w_i,j})$ and $(h^{-1}(0)\cap \bigcup_i U_{Y,w_i,tan})$ are compact subsets of $h^{-1}(0)$, and hence their images under ${\bf ev}_k$ are compact and proper subsets of the open set $U$. Therefore, we can choose a smaller open neighborhood $U_1$ such that its closure $\overline{U}_1\subset U$, and
  \[\bigcup_i {\bf ev}_k\left(h_{w_i}^{-1}(0)\cap \left( U_{w_i,T}\cup \bigcup_{j=1}^k U_{Y,w_i,j}\cup U_{Y,w_i,tan}\right)\right)\subset U_1\subset U\]
  and for any $i$, any $\hat{w}\in h_{w_i}^{-1}(0)\setminus {\bf ev}_k^{-1}(U_1)$ and any marked point $z_j$ of $\hat{w}$, we have that $\hat{w}(z_j)$ lies at distance at least $\epsilon_1>0$ from $Y$.
  
  \textbf{Step 3. \textit{Perturb the pullback multisection.}} For $w=\varphi_\ell(u)$, denote by
  \[\widetilde{h}_u\coloneqq \varphi_{\ell,u}^{*}(h_w)\]
  the pullback multisection on the obstruction bundle $\widetilde{\mathcal{E}}_{J_0,u,\epsilon}\rightarrow \widetilde{V}_u$, where $\widetilde{\mathcal{E}}_{J_0,u,\epsilon}$ has fiber $\Par_{uv}^{hol}\widetilde{E}_{J_0,u,\epsilon}$ at $v\in \widetilde{V}_u$, and $\varphi_{\ell,u}^{*}$ is defined in Definition \ref{Defi:5.21}.

  Let $\widetilde{h}\coloneqq \{\widetilde{h}_u\}$, and denote by
  \begin{align*}
    \widetilde{\bf ev}_k&\coloneqq \prod_{i=1}^k\ev_i:\widetilde{V}_u\rightarrow M^k,\\
    {\bf ev}_k&\coloneqq \prod_{i=1}^k\ev_i:V_w\rightarrow M^k
  \end{align*}
  for distinction. Note that the image $\widetilde{\bf ev}_k(\widetilde{h}^{-1}(0))$ coincides with ${\bf ev}_k(h^{-1}(0))$ as sets in $M^k$.
  
  For maps in $h^{-1}(0)\backslash {\bf ev}_k^{-1}(U_1)$, they are disjoint from $U_{w_i,T}$, $U_{Y,w_i,j}$, and $U_{Y,w_i,tan}$. Moreover, the images of their first $k$ marked points are at distance at least $\epsilon_1$ from $Y$. Therefore, we have
  \[ h_w^{-1}(0)\backslash {\bf ev}_k^{-1}(U_1) \subset V_{w,\epsilon_1}^{\bot},\]
  where $V_{w_i,\epsilon_1}^\bot$ is defined in Lemma \ref{lemma:5.25}. Furthermore,
  \[ \widetilde{h}_u^{-1}(0)\backslash \widetilde{\bf ev}_k^{-1}(U_1)=\varphi_\ell^{-1}(h_w^{-1}(0)\backslash {\bf ev}_k^{-1}(U_1)) \subset \varphi_\ell^{-1}(V_{w,\epsilon_1}^{\bot})=\widetilde{V}_{u,\epsilon_1}^{\bot}.\]
  This implies that, by Lemma \ref{lemma:5.25}, $\varphi_\ell$ is a submersion at maps in a neighborhood of $\widetilde{h}_u^{-1}(0)\backslash \widetilde{\bf ev}_k^{-1}(U_1)$. Thus $\widetilde{h}_u=\varphi_{\ell,u}^{*}(h_w)$ is transverse to $0$ in a neighborhood of $\widetilde{h}_u^{-1}(0)\backslash \widetilde{\bf ev}_k^{-1}(U_1)\subset \widetilde{V}_u$.

  Let $\mathfrak{C}$ be a compact subset of $\bigcup_u\widetilde{V}_u$ such that
  \[\widetilde{h}_u^{-1}(0)\backslash \widetilde{\bf ev}_k^{-1}(U)\subset\mathfrak{C}\cap \widetilde{V}_u\subset \widetilde{h}_u^{-1}(0)\backslash \widetilde{\bf ev}_k^{-1}(U_1)\]
  for every $u\in \overline{\mathcal{M}}_{0,k+\ell}(J_0,A,Y)$. By the arguments above, $\widetilde{h}_u$ is transverse to $0$ in a neighborhood of $\mathfrak{C}\cap \widetilde{V}_u$. Note also that maps in a sufficiently small neighborhood of $\mathfrak{C}\cap \widetilde{V}_u$ have only one node and are transverse to $Y$. Thus the map $\ev_{\mathbb{C},u}$ is also transverse to zero on $\mathfrak{C}\cap \widetilde{V}_u$.
  
  According to Lemma \ref{Lemma:5.4}, there is a multisection $\mwh\coloneqq \{\mwh_u\}$ such that
  \begin{enumerate}
    \item $\mwh_u=\widetilde{h}_u\times \ev_{\mathbb{C},u}$ in a neighborhood of $\mathfrak{C}\cap \widetilde{V}_u$ for each $u$.
    \item $\mwh_u$ is transverse to $0$.
    \item $\mwh_u$ is sufficiently close to $\widetilde{h}_u\times \ev_{\mathbb{C},u}$, such that
          \[\bigcup_{u\in \overline{\mathcal{M}}_{0,k+\ell}(J_0,A,Y)}\left(\widetilde{\bf ev}_k ((\widetilde{h}_u\times \ev_{\mathbb{C},u})^{-1}(0))\bigtriangleup \widetilde{\bf ev}_k (\mwh_u^{-1}(0))\right)\subset U,\]
          where $\bigtriangleup$ denotes the symmetric difference of sets.
  \end{enumerate}

  \textbf{Step 4. \textit{Prove that $\varphi_\ell$ is a $\ell!$ covering map on $\mathfrak{C}$.}} Take a triangulation
  \[\mwh^{-1}(0)=\bigcup_{\widetilde{\aleph}=1}^{\widetilde{\mathfrak{m}}}\widetilde{\sigma}_{\widetilde{\aleph}}(\Delta_d),\]
  where each $\widetilde{\sigma}_{\widetilde{\aleph}}(\Delta_d)$ is sufficiently small such that $\widetilde{\sigma}_{\widetilde{\aleph}}(\Delta_d)\subset \widetilde{V}_u$ for some $u$. Moreover, we require that if $\widetilde{\sigma}_{\widetilde{\aleph}}(\Delta_d)\cap \mathfrak{C}\neq\emptyset$ then
  \begin{equation}
    \label{eq:5.1.1}
    \widetilde{\sigma}_{\widetilde{\aleph}}(\Delta_d)\cap \widetilde{\bf ev}_k^{-1}(U_1)=\emptyset,
  \end{equation}
  where $U_1$ is defined in Step 2. For any $\widetilde{\sigma}_{\widetilde{\aleph}}(\Delta_d)$ does not intersect $\mathfrak{C}$ and $v\in\widetilde{\sigma}_{\widetilde{\aleph}}(\Delta_d)$, let $\hat{w}\coloneqq \varphi_\ell(v)$. Notice that since $\hat{w}$ is away from $U_{w,T}$, $U_{Y,w,j}$ and $U_{Y,w,tan}$, the underlying curve of $\hat{w}$ is simply $S^2$. Moreover $\hat{w}$ intersects $Y$ transversely, and the images of the $k$ marked points of $\hat{w}$ are away from $Y$. Therefore for any $v\in\varphi_\ell^{-1}(\hat{w})$, the map $v$ has only one node and intersects $Y$ transversely as well, the images of the marked points of $v$ are pairwise distinct. This implies that {\bf $\varphi_\ell^{-1}(\hat{w})$ contains $\ell!$ maps} due to the permutation of the last $\ell$ marked points. We also take a triangulation
  \[h^{-1}(0)=\bigcup_{\aleph=1}^{\mathfrak{m}}\sigma_\aleph(\Delta_d),\]
  each $\sigma_\aleph(\Delta_d)$ is sufficiently small such that, if $\varphi_\ell^{-1}(\sigma_\aleph(\Delta_d))\cap \mathfrak{C}\neq\emptyset$, then $\varphi_\ell^{-1}(\sigma_\aleph(\Delta_d))$ is contained in the union of some $\widetilde{\sigma}_{\widetilde{\aleph}}(\Delta_d)$ satisfying $\widetilde{\sigma}_{\widetilde{\aleph}}(\Delta_d)\cap \mathfrak{C}\neq \emptyset$. Therefore, for each $\hat{w}$ in such a $\sigma_\aleph(\Delta_d)$, the set $\varphi_\ell^{-1}(\hat{w})$ contains $\ell!$ distinct maps.

  Now take any $\widetilde{\sigma}_{\widetilde{\aleph}}(\Delta_d)$ with $\widetilde{\sigma}_{\widetilde{\aleph}}(\Delta_d)\cap\mathfrak{C}\neq\emptyset$, and any $\sigma_\aleph(\Delta_d)$ satisfying
  \[\sigma_\aleph(\Delta_d)\cap \mathfrak{C}\neq\emptyset,\quad\sigma_\aleph(\Delta_d)\cap\varphi_\ell(\widetilde{\sigma}_{\widetilde{\aleph}}(\Delta_d))\neq\emptyset.\]
  Let $v\in \varphi_\ell^{-1}(\sigma_\aleph(\Delta_d))\cap\widetilde{\sigma}_{\widetilde{\aleph}}(\Delta_d)$, then $\varphi_\ell$ is a diffeomorphism from a neighborhood of $v\in \widetilde{\sigma}_{\widetilde{\aleph}}(\Delta_d)$ to a neighborhood of $\hat{w}=\varphi_\ell(v)\in \sigma_\aleph(\Delta_d)$. According to (\ref{eq:4.1.1}), there are the following isomorphisms
  \begin{equation}
    \label{eq:5.3.3}
    \begin{aligned}
      &\det T\widetilde{V}_u\otimes \det \widetilde{\mathcal{E}}_{J_0,u,\epsilon}^{*}\cong \det T\widetilde{\sigma}_{\widetilde{\aleph}}(\Delta_d)\\
      &\det TV_w\otimes\det \mathcal{E}_{J_0,w,\epsilon}^{*}\cong \det T\sigma_\aleph(\Delta_d).
    \end{aligned}
  \end{equation}
  Since $\varphi_\ell$ is a diffeomorphism in a neighborhood of $v$, the map $\varphi_{\ell*}:T\widetilde{\sigma}_{\widetilde{\aleph}}(\Delta_d)\rightarrow T\sigma_\aleph(\Delta_d)$ is an isomorphism. Together with (\ref{eq:5.3.3}), we have the following isomorphism on some neighborhoods $O_v$ and $O_{\hat{w}}$ of $v$ and $\hat{w}$ respectively,
  \begin{equation}
    \label{eq:5.4.4}
    (\det T\widetilde{V}_u\otimes \det \widetilde{\mathcal{E}}_{J_0,u,\epsilon}^{*})|_{O_v}\cong (\det TV_w\otimes\det \mathcal{E}_{J_0,w,\epsilon}^{*})|_{O_{\hat{w}}}.
  \end{equation}
  
  As discussed on \cite[Prop.~16.5]{FK}, there exist canonical orientations for the Kuranishi structures on $\overline{\mathcal{M}}_{0,k}(J_0,A)$ and $\overline{\mathcal{M}}_{0,k+\ell}(J_0,A,Y)$. Let us examine equation (\ref{eq:5.4.4}) fiberwise. At $v$ we have
  \begin{equation}
    \label{eq:5.3.4}
    \begin{aligned}
    &\det T_v\widetilde{V}_u\otimes \det (\Par^{hol}_{uv}\widetilde{E}_{J_0,u,\epsilon}\times\mathbb{C}^\ell)^{*} \\
    \cong&\det T_v\widetilde{V}_{u,map}\otimes\det T_v\widetilde{V}_{u,resolve}\otimes\det T_v\widetilde{V}_{u,deform} \\
    &\otimes\det (\Par^{hol}_{uv}\widetilde{E}_{J_0,u,\epsilon})^{*}\otimes\det(\mathbb{C}^\ell)^{*}\\
    \cong&\det T_v\widetilde{V}_{u,map}\otimes\det (\Par^{hol}_{uv}\widetilde{E}_{J_0,u,\epsilon})^{*} \\
    &\otimes\det T_v\widetilde{V}_{u,resolve}\otimes\det T_v\widetilde{V}_{u,deform}\otimes\det(\mathbb{C}^\ell)^{*}.
    \end{aligned}
  \end{equation}
  The term $\det T_v\widetilde{V}_{u,map}\otimes\det (\Par^{hol}_{uv}\widetilde{E}_{J_0,u,\epsilon}))^{*}$ corresponds to the determinant of $D_u\bar{\partial}_{J_0}$. Similarly,
  \begin{equation}
    \label{eq:5.3.5}
    \begin{aligned}
      &\det T_{\hat{w}}V_{w}\otimes\det (\Par^{hol}_{w\hat{w}}E_{J_0,w,\epsilon}))^{*} \\
      \cong&\det T_{\hat{w}}V_{w,map}\otimes\det (\Par^{hol}_{w\hat{w}}E_{J_0,w,\epsilon}))^{*} \\
      &\otimes\det T_{\hat{w}}V_{w,resolve}\otimes\det T_{\hat{w}}V_{w,deform},
    \end{aligned}
  \end{equation}
  where $\det T_{\hat{w}}V_{w,map}\otimes\det (\Par^{hol}_{w\hat{w}}E_{J_0,w,\epsilon}))^{*}$ corresponds to the determinant of $D_w\bar{\partial}_{J_0}$. Since on the obstruction bundles $\widetilde{\mathcal{E}}_{J_0,u,\epsilon}$ and $\mathcal{E}_{J_0,w,\epsilon}$ we have
  \[\bar{\partial}_{J_0}\circ \varphi_\ell=\varphi_\ell\circ \bar{\partial}_{J_0},\]
  it follows that $\varphi_\ell$ preserves orientations on $\det T_v\widetilde{V}_{u,map}\otimes\det (\Par^{hol}_{uv}\widetilde{E}_{J_0,u,\epsilon}))^{*}$ and $\det T_{\hat{w}}V_{w,map}\otimes\det (\Par^{hol}_{w\hat{w}}E_{J_0,w,\epsilon}))^{*}$. 

  Furthermore, the map $v\in \widetilde{\sigma}_{\widetilde{\aleph}}(\Delta_d)$ has exactly one node and intersects $Y$ transversely. The map $\hat{w}\coloneqq \varphi_\ell(v)$ also has exactly one node and intersects $Y$ transversely. Let $S^2_v$ denote the underlying sphere of $v$, and $S^2_{\hat{w}}$ the underlying sphere of $\hat{w}$. A small neighborhood $O_{S^2_v}\subset \widetilde{V}_{u,resolve}\times \widetilde{V}_{u,deform}$ of $S^2_v$ is biholomorphic to $\mathbb{C}^{k+\ell}$, representing the deformation of all $k+\ell$ marked points on $S^2_v$. Similarly, a small neighborhood $O_{S^2_{\hat{w}}}\subset V_{w,resolve}\times V_{w,deform}$ of $S^2_{\hat{w}}$ is biholomorphic to $\mathbb{C}^k$, representing the deformation of all $k$ marked points on $S^2_{\hat{w}}$. 
  
  Since $v$ is transverse to $Y$, and moreover according to the construction of the obstruction data $\widetilde{E}_{J_0,u,\epsilon}$, the map $v$ is $J_0$-holomorphic near its intersections with $Y$. Therefore $\prod_{i=1}^\ell\ev_{k+i}$ maps $\mathbb{C}^\ell\subset \mathbb{C}^{k+\ell}\cong O_{S^2_v}$ $J_0$-holomorphically to $M^k$. Notably, the map $\ev_{\mathbb{C},u}$ is $\prod_{i=1}^\ell\ev_{k+i}$ followed by a projection to the normal direction of $Y$ in $M$. Since $Y$ is a $J_0$-holomorphic hypersurface, the projection is $J_0$-holomorphic as well. Thus $\ev_{\mathbb{C},u}$ maps the last $\ell$ factors $\mathbb{C}^\ell\subset O_{S^2_v}$ $J_0$-holomorphically to the $\mathbb{C}^\ell$ component in the obstruction data $\widetilde{E}_{J_0,u,\epsilon}\times \mathbb{C}^\ell$. Therefore, for such a $v\in \widetilde{\sigma}_{\widetilde{\aleph}}(\Delta_d)$,
  \begin{equation}
    \label{eq:5.3.6}
    \begin{aligned}
      &\det T_v\widetilde{V}_{u,resolve}\otimes\det T_v\widetilde{V}_{u,deform}\otimes\det(\mathbb{C}^\ell)^{*}\\
      \cong& \det(T_{S^2_v}O_{S^2_v})\otimes\det(\mathbb{C}^\ell)^{*}\\
      \cong& \det(\mathbb{C}^k) \cong \det(T_{S^2_{\hat{w}}}O_{S^2_{\hat{w}}})\\
      \cong& \det T_{\hat{w}}V_{w,resolve}\otimes\det T_{\hat{w}}V_{w,deform},
    \end{aligned}
  \end{equation}
  where $T_{S^2_v}O_{S^2_v}$ and $T_{S^2_{\hat{w}}}O_{S^2_{\hat{w}}}$ are the tangent spaces at $S^2_v$ and $S^2_{\hat{w}}$ respectively. The third line of (\ref{eq:5.3.6}) follows from the fact that the first $k$ factors of $\mathbb{C}^{k+\ell}\cong O_{S^2_v}$ represent precisely the deformation of the first $k$ marked points on $S^2_v$, which is identical (via the map $\varphi_\ell$) to the deformation of the $k$ marked points on $S^2_{\hat{w}}$. All isomorphisms in (\ref{eq:5.3.6}) are orientation-preserving.

  Now combine (\ref{eq:5.3.4}), (\ref{eq:5.3.5}), and the following two facts:
  \begin{itemize} 
    \item $\varphi_\ell$ preserves orientations on $\det T_v\widetilde{V}_{u,map}\otimes\det (\Par^{hol}_{uv}\widetilde{E}_{J_0,u,\epsilon})^{*}$ \\
     and $\det T_{\hat{w}}V_{w,map}\otimes\det (\Par^{hol}_{w\hat{w}}E_{J_0,w,\epsilon})^{*}$,
    \item all isomorphisms in (\ref{eq:5.3.6}) are orientation preserving,
  \end{itemize}
  we conclude that the isomorphism (\ref{eq:5.4.4}) is orientation preserving. Combining everything in this step, we prove that {\bf $\varphi_\ell$ is a $\ell!$ covering map on $\mathfrak{C}$}. 

  \textbf{Step 5. \textit{Complete the proof.}} Now we apply A.Zinger's results in \cite{Zinger} here. The open neighborhood $U$ satisfies that
  \[0\to H_d(M^k;\mathbb{Q})\to H_d(M^k,U;\mathbb{Q})\to 0\]
  as in (\ref{eq:3.3.2}), and on the compact subset $\mathfrak{C}$, the forgetful map $\varphi_\ell$ is a $\ell!$ covering map. Then we have the following calculations:
  \begin{align*}
    &{\bf ev}_{k*}([\widetilde{\mathfrak{h}}^{-1}(0)])-[{\bf ev}_k:\mathfrak{C}\to M^k]=0\in H_d(M^k,U;\mathbb{Q})\\
    &\ell!{\bf ev}_{k*}([h^{-1}(0)])-[{\bf ev}_k:\mathfrak{C}\to M^k]=0\in H_d(M^k,U;\mathbb{Q})
  \end{align*}
  
  Thus the two virtual fundamental classes we are\label{Fix:M:6:3} trying to compare differ by the factor $\ell!$ in $H_d(M^k,U;\mathbb{Q})\cong H_d(M^k;\mathbb{Q})$ and this completes the proof.
\end{proof}

\subsection{Comparison between invariants}\label{Sec:5.5}
In this section, we piece together the results from Section \ref{Sec:5.1} to Section \ref{Sec:PullBack}. Recall the spaces $B^{*}$, $\mathcal{J}_{\ell+1}^{*}(M,Y;J,\theta_1)$ defined in Definition \ref{Def:3.20}, and the space $\mathcal{J}_{\ell+1}^{reg*}\subset \mathcal{J}_{\ell+1}^{*}(M,Y;J,\theta_1)$ defined in Theorem \ref{Thm:3.25}. Take a $K\in \mathcal{J}_{\ell+1}^{reg*}$ and a domain independent $J_0\in B^{*}\cap \mathcal{J}_{\ell+1}^{*}(M,Y;J,\theta_1)$, such that $K, J_0$ are connected by a path $\{K_t\}_{t\in[0,1]}\subset \mathcal{J}_{\ell+1}^{*}(M,Y;J,\theta_1)$ with $K_0=J_0$, $K_1=K$. We shall prove the final result:

\begin{theorem}[Theorem \ref{introcorollayA}]\label{Thm:5.17}
  For $K\in \mathcal{J}_{\ell+1}^{reg*}$ and $J_0\in B^{*}\cap \mathcal{J}_{\ell+1}^{*}(M,Y;J,\theta_1)$ connected by a path $\{K_t\}_{t\in[0,1]}\subset \mathcal{J}_{\ell+1}^{*}(M,Y;J,\theta_1)$ with $K_0=J_0$, $K_1=K$, take a generic multisection $s^{'}=\{s^{'}_w\}$ on the Kuranishi structure
  \[\{(V_w,E_{J_0,w},\Gamma_w,\psi_w,s_w)\}_{w\in \overline{\mathcal{M}}_{0,k}(J_0,A)}.\]
  Then the homology class given by the pseudocycle
  \[{\bf ev}_k\coloneqq \prod_{i=1}^k\ev_i:\mathcal{M}_{0,k+\ell}(K,A,Y)\rightarrow M^k\]
  is equal to $\ell !$ times the homology class ${\bf ev}_{k*}([s^{'-1}(0)])$.
\end{theorem}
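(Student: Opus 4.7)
The plan is simply to concatenate the four intermediate equivalences summarized in the diagram of the introduction, and invoke the invariance of the virtual fundamental class under the choice of obstruction data. The main work has already been done in Sections \ref{Sec:5.1}--\ref{Sec:PullBack}; this theorem merely stitches those pieces together along the common evaluation map $\mathbf{ev}_k$.

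First, Theorem \ref{Thm:5.14} identifies the homology class of the pseudocycle
\[\mathbf{ev}_k: \mathcal{M}_{0,k+\ell}(K,A,Y) \to M^k\]
with the virtual fundamental class on $\overline{\mathcal{M}}_{0,k+\ell}(K,A,Y)$ coming from the Kuranishi structure $\{(V_u, E_{K,u}\times\mathbb{C}^\ell, \Gamma_u, \psi_u, s_u\times \ev_{\mathbb{C},u})\}$ of Theorem \ref{Thm:5.12}. Second, Theorem \ref{Thm:5.15} produces, from the path $\{K_t\}_{t\in[0,1]}$, an oriented Kuranishi cobordism on $\mathcal{W}_{k+\ell}(A,\{K_t\}_t,Y)$ whose boundary components are the Kuranishi structures on $\overline{\mathcal{M}}_{0,k+\ell}(K,A,Y)$ and $\overline{\mathcal{M}}_{0,k+\ell}(J_0,A,Y)$. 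Pushing forward generic transverse multisections via $\mathbf{ev}_k$ and applying the cobordism property recalled in Section \ref{Sec:4} yields equality of the two associated homology classes in $H_*(M^k;\mathbb{Q})$.

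Third, on the single space $\overline{\mathcal{M}}_{0,k+\ell}(J_0,A,Y)$ we now have two oriented Kuranishi structures inducing virtual fundamental classes: the one from Theorem \ref{Thm:5.12} with obstruction data $E_{J_0,u}\times\mathbb{C}^\ell$, and the pullback one $\{(\widetilde{V}_u,\widetilde{E}_{J_0,u,\epsilon}\times\mathbb{C}^\ell,\Gamma_u,\widetilde{\psi}_u,\widetilde{s}_u\times\ev_{\mathbb{C},u})\}$ from Proposition \ref{prop:5.23}. Since the virtual fundamental class is independent of the choice of obstruction data (this is the standard independence statement reviewed in Section \ref{Sec:4}), the two pushforwards ${\bf ev}_{k*}$ agree in $H_*(M^k;\mathbb{Q})$. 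Fourth, Theorem \ref{Thm:5.22} identifies $\mathbf{ev}_{k*}$ of the pullback virtual fundamental class on $\overline{\mathcal{M}}_{0,k+\ell}(J_0,A,Y)$ with $\ell!$ times $\mathbf{ev}_{k*}$ of the virtual fundamental class on $\overline{\mathcal{M}}_{0,k}(J_0,A)$ obtained from the modified obstruction data $E_{J_0,w,\epsilon}$ of Proposition \ref{proposition3.28}.

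Finally, invoking once more the independence of the virtual fundamental class from the choice of obstruction data, the class on $\overline{\mathcal{M}}_{0,k}(J_0,A)$ built from $E_{J_0,w,\epsilon}$ equals the one built from the original $E_{J_0,w}$, which is by definition $\mathbf{ev}_{k*}([s'^{-1}(0)])$ for any generic multisection $s'$. Chaining the four equalities gives
\[[\mathbf{ev}_k\colon \mathcal{M}_{0,k+\ell}(K,A,Y)\to M^k] \;=\; \ell!\cdot \mathbf{ev}_{k*}([s'^{-1}(0)])\]
in $H_{2(n-3+k+c_1(A))}(M^k;\mathbb{Z})$ (integrality on the left-hand side follows from Zinger's Theorem \ref{Thm:3.3.1}). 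I expect the only delicate point, beyond bookkeeping, to be ensuring compatibility of orientations at each transition, but this has been addressed in the proofs of Theorems \ref{Thm:5.12}, \ref{Thm:5.15}, and \ref{Thm:5.22}; no new argument is required here.
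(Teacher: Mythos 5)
Your proposal is correct and follows exactly the same chain of equivalences as the paper's own proof (Theorem \ref{Thm:5.14}, then the cobordism of Theorem \ref{Thm:5.15}, then independence of the virtual fundamental class under change of obstruction data, then Theorem \ref{Thm:5.22}, then independence again). The one small inaccuracy is asserting the final equality in $H_{*}(M^k;\mathbb{Z})$: while the pseudocycle side is integral by Zinger's theorem, the right-hand side $\ell!\,\mathbf{ev}_{k*}([s'^{-1}(0)])$ is a priori only a rational class because the Kuranishi structure on $\overline{\mathcal{M}}_{0,k}(J_0,A)$ can have nontrivial $\Gamma_w$, so the equality should be read in $H_{*}(M^k;\mathbb{Q})$ as the paper does.
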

\begin{proof}
  By theorem \ref{Thm:5.12}, there exists a Kuranishi structure
  \[\{(V_u,E_{K,u}\times\mathbb{C}^\ell,\Gamma_u,\psi_u,s_u\times \ev_{\mathbb{C},u})\}_{u\in\overline{\mathcal{M}}_{0,k+\ell}(K,A,Y)}\]
  over $\overline{\mathcal{M}}_{0,k+\ell}(K,A,Y)$. Take a generic multisection $h_K=\{h_{u;K}\}$ of the obstruction bundle, according to Theorem \ref{Thm:5.14}\label{Fix:C:3:8} (also notice that the virtual fundamental class does not depend on the choice of a generic multisection),
  \[[{\bf ev}_k:\mathcal{M}_{0,k+\ell}(K,A,Y)\to M^k]={\bf ev}_{k*}([h_K^{-1}(0)]),\]
  $[{\bf ev}_k:\mathcal{M}_{0,k+\ell}(K,A,Y)\to M^k]$ denotes the homology class induced by the pseudocycle ${\bf ev}_k$. According to Theorem \ref{Thm:5.15}, there is a cobordism between the Kuranishi structures over $\overline{\mathcal{M}}_{0,k+\ell}(K,A,Y)$ and $\overline{\mathcal{M}}_{0,k+\ell}(J_0,A,Y)$. Therefore
  \[{\bf ev}_{k*}([h_K^{-1}(0)])={\bf ev}_{k*}([h_{J_0}^{-1}(0)]),\]
  where $h_{J_0}$ is a generic multisection on
  \[\{(V_u,E_{J_0,u}\times\mathbb{C}^\ell,\Gamma_u,\psi_u,s_u\times \ev_{\mathbb{C},u})\}_{u\in\overline{\mathcal{M}}_{0,k+\ell}(J_0,A,Y)}.\]

  Now we shift to the Kuranishi structure
  \[\{(\widetilde{V}_u,\widetilde{E}_{J_0,u,\epsilon}\times \mathbb{C}^\ell,\Gamma_u,\widetilde{\psi}_{u},\widetilde{s}_u\times ev_{\mathbb{C},u})\}_{u\in \overline{\mathcal{M}}_{0,k+\ell}(J_0,A,Y)},\]
  and take a generic multisection $\widetilde{h}_\epsilon$ on it. We also take a generic multisection $h_\epsilon$ on
  \[\{(V_w,E_{J,w,\epsilon},\Gamma_w,\psi_w,s_w)\}_{w\in\overline{\mathcal{M}}_{0,k}(J_0,A)}.\]
  According to Theorem \ref{Thm:5.22},
  \[\ell !{\bf ev}_{k*}([h_\epsilon^{-1}(0)])={\bf ev}_{k*}([\widetilde{h}_\epsilon^{-1}(0)]).\]

  Finally, take a generic multisection $s^{'}$ on
  \[\{(V_w,E_{J_0,w},\Gamma_w,\psi_w,s_w)\}_{w\in \overline{\mathcal{M}}_{0,k}(J_0,A)}.\]
  Since the virtual fundamental class does not depend on the choice of Kuranishi structures, we have
  \begin{align*}
    [{\bf ev}_k:\mathcal{M}_{0,k+\ell}(K,A,Y)\to M^k]=&{\bf ev}_{k*}([h_K^{-1}(0)])={\bf ev}_{k*}([h_{J_0}^{-1}(0)])\\
    =&{\bf ev}_{k*}([\widetilde{h}_\epsilon^{-1}(0)])=\ell !{\bf ev}_{k*}([h_\epsilon^{-1}(0)])\\
    =&\ell !{\bf ev}_{k*}([s^{'-1}(0)]).
  \end{align*}
  This completes the proof.
\end{proof}

\bibliographystyle{plain}

\begin{thebibliography}{10}
\addcontentsline{toc}{section}{References}
\label{Fix:D:2}
\bibitem{StCa}
Cairns, S.S. (1949) 'Triangulation of the manifold of class one', Bulletin of the American Mathematical Society, 41(8), pp. 549-552.

\bibitem{CM}
Cieliebak, K. and Mohnke, K. (2007) 'Symplectic hypersurfaces and transversality in Gromov-Witten theory', Journal of Symplectic Geometry, 5(3), pp. 281-356.

\bibitem{FloerTra}
Charest, F. and Woodward, C. (2017) 'Floer trajectories and stabilizing divisors', Journal of Fixed Point Theory and Applications, 19(2), pp. 1165-1236.

\bibitem{SKD}
Donaldson, S.K. (1996) 'Symplectic Submanifolds and Almost-complex Geometry', Journal of Differential Geometry, 44(4), pp. 666-705.

\bibitem{HF}
Floer, A., Hofer, H. and Salamon, D. (1995) 'Transversality in elliptic Morse theory for the symplectic action', Duke Mathematical Journal, 80(1), pp. 251-292.

\bibitem{FOOO}
Fukaya, K., Oh, Y.-G., Ohta, H. and Ono, K. (2009) 'Lagrangian intersection Floer theory: anomaly and obstruction', AMS/IP Studies in Advanced Mathematics, 46. American Mathematical Society, Providence, RI.

\bibitem{FOOO-T}
Fukaya, K., Oh, Y-G., Ohta, H. and Ono, K. (2012) 'Technical Details on Kuranishi Structure and Virtual Fundamental Chain', arXiv: 1209.4410.

\bibitem{KYHK}
Fukaya, K., Oh, Y-G., Ohta, H. and Ono, K. (2020) 'Kuranishi Structures and Virtual Fundamental Chains', Springer, Cham.

\bibitem{FK}
Fukaya, K. and Ono, K. (1999) 'Arnold Conjecture and Gromov-Witten Invariant', Topology, 38(5), pp. 933-1048.

\bibitem{Gerstenberger}\label{Fix:M:4:1}
Gerstenberger, A. (2013) 'Geometric transversality in higher genus Gromov-Witten theory', arXiv:1309.1426.

\bibitem{MG}
Gromov, M. (1985) 'Pseudo Holomorphic Curves in Symplectic Manifolds', Inventiones Mathematicae, 82(2), pp. 307-347.

\bibitem{Hatcher}\label{Fix:ref:1}
Hatcher, A. (2002) 'Algebraic topology', Cambridge University Press, Cambridge.

\bibitem{HWZ}\label{Fix:M:4:3}
Hofer, H., Wysocki, K. and Zehnder, E. (2011) 'Applications of Polyfold Theory I: The Polyfolds of Gromov-Witten Theory', arXiv:1107.2097.

\bibitem{CH}
Hummel, C. (1997) 'Gromov's compactness theorem for pseudo-holomorphic curves', Progress in Mathematics, Springer, Basel.

\bibitem{IP}\label{Fix:M:4:2}
Ionel, E.-N. and Parker, T. H. (2015) 'A Natural Gromov-Witten Virtual Fundamental Class', arXiv:1302.3472.

\bibitem{MK}
Kontsevich, M. (1995) 'Enumeration of rational curves via torus actions', in Dijkgraaf, R. et al. (eds.) The Moduli Space of Curves, Progress in Mathematics, 129, Birkhäuser Boston, pp. 335-368.

\bibitem{MS}
McDuff, D. and Salamon, D. (2004) 'J-holomorphic curves and symplectic topology', AMS Colloquium Publications, 52. American Mathematical Society, Providence, RI.

\bibitem{Pardon}\label{Fix:M:4:4}
Pardon, J. (2016) 'An algebraic approach to virtual fundamental cycles on moduli spaces of pseudo-holomorphic curves', Geometry \& Topology, 20(2), pp. 779-1034.

\bibitem{Riesz}
Riesz, F. and Sz.-Nagy, B. (2012) 'Functional Analysis', Dover Books on Mathematics, Dover Publications, New York.

\bibitem{Rudin}
Rudin, W. (1973) 'Functional analysis', McGraw-Hill, New York.

\bibitem{Whit}
Whitehead, J.H.C. (1940) 'On $C^1$-complexes', Annals of Mathematics, 41(4), pp. 809-824.

\bibitem{Zinger}\label{Fix:ref:2}
Zinger, A. (2008) 'Pseudocycles and Integral Homology', Transactions of the American Mathematical Society, 360(5), pp. 2741-2765.

\end{thebibliography}

\end{document}